\newtheorem{theorem}{Theorem}
\newtheorem{corollary}{Corollary}
\newtheorem{proposition}{Proposition}
\newtheorem{definition}{Definition}
\newtheorem{example}{Example}
\newtheorem{remark}{Remark}
\newcommand{\iverson}[1]{\llbracket#1\rrbracket}
\DeclarePairedDelimiterX\ourset[1]\lbrace\rbrace{\def\mid{\;\delimsize\vert\;}\,#1\,}
\DeclarePairedDelimiterX{\ourabs}[1]\lvert\rvert{#1}
\newsavebox\CBox
\newcommand\strike[2][0.5pt]{%
  \ifmmode\sbox\CBox{$#2$}\else\sbox\CBox{#2}\fi%
  \makebox[0pt][l]{\usebox\CBox}%
  {\color{red}\rule[0.5\ht\CBox-#1/2]{\wd\CBox}{#1}}}
\DeclareMathOperator*{\argmin}{argmin}
\DeclareMathOperator*{\argmax}{argmax}
\DeclareMathOperator*{\conv}{conv}
\DeclareMathOperator*{\cone}{cone}
\newcommand*{\UB}{B}
\DeclareMathOperator*{\SOL}{SOL}
\DeclareMathOperator*{\OPT}{OPT}
\newcommand*{\minimal}{A_{\min}}
\newcommand*{\domain}{D}
\newcommand*{\TPA}[1]{T_{#1}} 
\newcommand*{\VAL}[2]{\<{#1},\phi({#2})\>} 
\newcommand*{\vars}{V}
\newcommand*{\constraints}{C}
\newcommand*\scope{S}
\newcommand*{\deactivating}{R}
\newcommand*{\chosenindices}{I} 
\newcommand*{\edges}{E}
\newcommand*{\CSP}{A}
\newcommand*{\Mset}{\deactivating^*} 
\DeclareMathOperator*{\COL}{COL}
\newcommand*{\R}{\mathbb{R}}
\def\<{\langle}
\def\>{\rangle}
\newcommand*{\val}[1]{\mathsf{#1}}
\renewcommand*{\S}{§}
\title{Super-Reparametrizations of Weighted CSPs: \\Properties and Optimization Perspective}
\author{Tom\'{a}\v{s} Dlask$^{1,}$\footnote{Corresponding author.}\;\,, Tom\'{a}\v{s} Werner$^1$, and Simon de Givry$^2$}
\date{$^1$ Department of Cybernetics, Faculty of Electrical Engineering, Czech Technical University in Prague, Karlovo n\'{a}m\v{e}st\'{i} 13, Prague, 12000, Czech Republic\\ [1ex]
      $^2$ Université Fédérale de Toulouse, ANITI, INRAE, UR 875, 31320 Castanet-Tolosan, France\\ [2ex]
      {\normalsize \texttt{dlaskto2@fel.cvut.cz, werner@fel.cvut.cz, simon.de-givry@inrae.fr}}}
\begin{document}

\begin{textblock*}{21cm}(0cm,0.3cm)
\centering
The version of record of this article is published in Constraints and is available online at \url{https://doi.org/10.1007/s10601-023-09343-6}.
\end{textblock*}

\emergencystretch 3em

\renewcommand*{\thefootnote}{\fnsymbol{footnote}}

\maketitle

\renewcommand*{\thefootnote}{\arabic{footnote}}

\begin{abstract}\noindent
The notion of reparametrizations of Weighted CSPs (WCSPs) (also known as equivalence-preserving transformations of WCSPs) is well-known and finds its use in many algorithms to approximate or bound the optimal WCSP value. In contrast, the concept of super-reparametrizations (which are changes of the weights that keep or increase the WCSP objective for every assignment) was already proposed but never studied in detail. To fill this gap, we present a number of theoretical properties of super-reparametrizations and compare them to those of reparametrizations. Furthermore, we propose a framework for computing upper bounds on the optimal value of the (maximization version of) WCSP using super-reparametrizations. We show that it is in principle possible to employ arbitrary (under some technical conditions) constraint propagation rules to improve the bound. For arc consistency in particular, the method reduces to the known Virtual AC (VAC) algorithm. We implemented the method for singleton arc consistency (SAC) and compared it to other strong local consistencies in WCSPs on a public benchmark. The results show that the bounds obtained from SAC are superior for many instance groups.
\end{abstract}

\noindent
{\small
\textbf{Keywords:} Weighted CSP, Super-Reparametrization, Linear Programming, Constraint Propagation}

\section{Introduction}\label{se:introduction}

In the \emph{weighted constraint satisfaction problem (WCSP)\/} we maximize the sum of (weight) functions over many discrete variables, where each function depends only on a (usually small) subset of the variables. A popular approach to tackle this NP-hard combinatorial optimization problem is via its linear programming (LP) relaxation \cite{schlesinger1976,Werner-PAMI07,Wainwright08,Zivny-VCSPbook-2012,Bogdan-book-2019}. The dual of this LP relaxation \cite{Werner-PAMI07,Bogdan-book-2019,Cooper-AI-2010} can be interpreted as follows. Feasible dual solutions correspond to \emph{reparametrizations} (also known as equivalence-preserving transformations~\cite{Cooper-AI-2010}) of the WCSP objective function, which are obtained by moving weights between weight functions so that the WCSP objective function is preserved. The dual LP relaxation then seeks to find such a reparametrization of the initial WCSP that minimizes an \emph{upper bound} on the WCSP objective value by reparametrizations. For some instances, the minimal upper bound is equal to the maximal value of the WCSP objective (i.e., the LP relaxation is tight) but, in general, there is a gap between them. The precise form of the dual LP differs slightly from author to author.

For larger instances, solving the LP relaxation to global optimality is too costly. Therefore, the upper bound is usually minimized suboptimally by performing reparame\-trizations only locally. Stopping points of these suboptimal methods are usually characterized by various levels of local consistency of the CSP formed by the \emph{active tuples} (i.e., the tuples with the maximum weight in each weight function individually) of the reparametrized WCSP. This is consistent with the fact that a necessary (but not sufficient) condition for global optimality of the dual LP relaxation is that the active-tuple CSP has a non-empty local consistency closure. The level of local consistency at optimum depends on the space of allowed reparametrizations: if weights can move only between pairs of weight functions of which one is unary or nullary, it is arc consistency (AC); if weights can move between two weight functions of any arity, it is pairwise consistency (PWC). These suboptimal methods can be divided into two main classes.

The first class, popular in computer vision and machine learning, is known as \emph{convex message passing} \cite{trws,mplp,tourani2018mplp++,tourani2020taxonomy,Werner-PAMI07,Werner-PAMI-2010,Kolmogorov-SRMP}. These methods repeat a simple local operation and can be seen as block-coordinate descent with exact updates satisfying the so-called relative interior rule~\cite{cvpr_werner2020}. At fixed points, the active-tuple CSP has non-empty AC (or PWC) consistency closure. These methods yield good upper bounds but are too slow to be applied in each node of a branch-and-bound search.

The second class has been called \emph{soft local consistency} methods in constraint programming~\cite{Cooper-AI-2010}, due to its similarity to local consistencies in the ordinary CSP. One type of these methods moves only integer weights between weight functions (assuming all initial weights are integer) and is efficient enough to be maintained during search. Its most advanced representant is \emph{existential directional arc consistency (EDAC)} algorithm~\cite{de2005existential}. The other type allows moving fractional weights, which can lead to better bounds but is more costly, hence usually not suitable to be applied during search. Its representants are the \emph{virtual arc consistency (VAC)} algorithm~\cite{cooper2008virtual,Cooper-AI-2010} and the very similar Augmenting DAG algorithm \cite{Koval76,Werner-PAMI07,Werner-TR-2005}. These methods are based on the following fact: whenever the active-tuple CSP has an empty AC closure, there exists a reparametrization of the WCSP that decreases the upper bound. 
Thus, each iteration of these algorithms first applies the AC algorithm to the active-tuple CSP and if domain wipe-out occurs, it constructs a dual-improving direction by back-tracking the history of the AC algorithm, and finally reparametrizes the current WCSP by moving along this direction by a suitable step size.
The VAC and Augmenting DAG algorithms converge to a non-unique state when the active-tuple CSP has a non-empty AC closure (which is called virtual arc consistency) but are typically faster than convex message passing methods.

In the soft-consistency terminology, global optima of the dual LP relaxation have been called \emph{optimally soft arc consistent (OSAC)} WCSPs \cite{cooper2007optimal,Cooper-AI-2010}. In this sense, EDAC and VAC are relaxations of OSAC. But note that OSAC can no longer be considered a local consistency, since no algorithm using only local operations is known to enforce it\footnote{And it is unlikely that such an algorithm exists, since it has been proved \cite{Prusa-PAMI-2015,Prusa-Werner-SIAM2019} that finding global optimum of the LP relaxation of the WCSP is not easier than solving the general linear programming problem.}.

Reparametrizations in general cannot enforce stronger local consistencies of the active-tuple CSP than PWC. This can be seen as follows: if the active-tuple CSP has a non-empty PWC closure but violates some stronger local consistency (hence it is unsatisfiable), there exists no reparametrization that would decrease the upper bound and possibly make the active-tuple CSP satisfy the stronger local consistency. The only way to achieve stronger local consistencies (such as $k$-consistencies) of the active-tuple CSP by reparametrizations is to introduce new weight functions (of possibly higher arities) and then move weights between these new weight functions and the existing weight functions. This allows constructing a hierarchy of progressively tighter LP relaxations of the WCSP \cite{Werner-PAMI-2010,sontag2008tightening,nguyen2017triangle,batra2011tighter}, including the Sherali-Adams hierarchy \cite{Thapper-ICALP-2015}.

In this paper, we study a different LP-based approach, namely an LP formulation of the WCSP, which was proposed in~\cite{komodakis2008beyond} but never pursued later. It differs from the above well-known LP relaxation and does not belong to the hierarchy of LP relaxations obtained by introducing new weight functions of higher arities. This LP formulation minimizes the same upper bound on the WCSP objective value but this time over \emph{super-reparametrizations} of the initial WCSP objective function, which are changes of the weights that either preserve or increase the WCSP objective value for every assignment. This LP formulation has an exponential number of inequality constraints (representing super-reparametrizations) and is exact, i.e., its minimal value is always equal to the maximal value of the WCSP objective.

We propose to solve this LP suboptimally by a local search method, which is based on the following key observation: whenever the active-tuple CSP is unsatisfiable, there exists a super-reparametrization (but possibly no reparametrization) that decreases the upper bound. The direction of this super-reparametrization is a \emph{certificate of unsatisfiability} of the active-tuple CSP, which can be constructed from the history of the CSP solver. Note that this approach strictly generalizes the VAC algorithm: if the active-tuple CSP has a non-empty AC closure but is unsatisfiable, the VAC algorithm is stuck (because no reparametrization can decrease the upper bound) but our algorithm can decrease the bound by a super-reparametrization. The cost for this greater generality is that super-reparametrizations may preserve neither the WCSP objective value for some assignments nor the set of optimal assignments, but they can nevertheless provide valid, and possibly tighter, upper bounds on the WCSP optimal value.

After formulating this general framework, we focus on the case when the unsatisfiability of the active-tuple CSP is proved by local consistencies stronger than AC/PWC. In particular we use \emph{singleton arc consistency (SAC)}, which is interesting because it does not have bounded support~\cite{bessiere2008theoretical} and therefore it would be difficult to achieve by introducing new weight functions of higher arity. We show how to construct a certificate of unsatisfiability of a CSP from the history of the SAC algorithm. Our algorithm then interleaves AC and SAC: we always keep decreasing the upper bound by reparametrizations until the active-tuple CSP has non-empty AC closure, and only then decrease the bound by a super-reparametrization if the SAC closure of active-tuple CSP is found empty. In experiments we show that on many WCSP instances, this algorithm yields better bounds than state-of-the-art soft local consistency methods in reasonable runtime. Note, we report only the achieved upper bounds but do not use them in branch-and-bound search, which would be beyond the scope of our paper.

To the best of our knowledge, super-reparametrizations have not been utilized or studied except for~\cite{komodakis2008beyond} and~\cite{sontag2009tree}. In~\cite{komodakis2008beyond}, super-reparametrizations were used to obtain tighter bounds using a specialized cycle-repairing algorithm and were identified in~\cite{sontag2009tree} as a property satisfied by all formulations of the linear programming relaxations based on reparametrizations. However, \cite{sontag2009tree}~focuses almost solely on the relation between different formulations of reparametrizations, instead of super-reparametrizations. To fill in this gap, we theoretically analyze the associated optimization problem and also the properties of super-reparametrizations.

Compared to the previous version of this paper~\cite{cp2021}, we improved the current paper in the following ways:
\begin{itemize}
    \item Most importantly, we include a study on the theoretical properties of super-reparametrizations and compare them to those of reparametrizations (\S\ref{se:theoretical_properties}).
    \item Although our implementation remains limited to WCSPs of arity 2, we present all of our theoretical results for WCSPs of any arity.
    \item We include a geometric interpretation that provides intuitive insights and thus simplifies understanding of our method (\S\ref{se:properties_of_the_method}).
    \item In addition to making our code publicly available, we add more information on implementation details to improve reproducibility (\S\ref{se:final_alg}).
    \item We also analyze the cone of non-negative weighted CSPs and prove that it is dual to the marginal polytope (\S\ref{sec:super-reparam}).
    \item Unsurprisingly, we show that some decision problems connected to our approach and super-reparametrizations are NP-hard (in~\S\ref{se:hardness}).
\end{itemize}

\paragraph*{Structure}

We begin in~\S\ref{se:notation} by formally defining the Weighted CSP, classical (crisp) CSP, and introducing the notation that will be used throughout the paper. Then, in~\S\ref{se:bounding}, we formally define the optimization problem of minimizing an upper bound over reparametrizations and/or super-reparametrizations where we also state the sufficient and necessary optimality conditions. Next, \S\ref{se:optimizing_bound} proposes a practical approach for approximate minimization of the upper bound over super-reparametrizations using constraint propagation. We also give experimental results comparing our approach with existing soft local consistencies. Additional properties of the underlying active-tuple CSPs (see definition later) and the sets of optimal (or also non-optimal) super-reparametrizations are given in~\S\ref{se:theoretical_properties}. \S\ref{se:hardness}~presents the hardness results. We provide a detailed example demonstrating EDAC, VAC, and our proposed approach with SAC in Appendix~\ref{ap:example}.

\section{Notation}\label{se:notation}

Let $\vars$~be a finite set of \emph{variables\/} and $\domain$~a finite \emph{domain\/} of each variable. An \emph{assignment\/} $x\in \domain^\vars$ assigns\footnote{As usual, $\domain^\vars$ denotes the set of all mappings from $\vars$ to $\domain$, so $x\in\domain^\vars$ is the same as $x\colon \vars\to \domain$.} a value $x_i\in \domain$ to each variable $i\in \vars$. Let $\constraints \subseteq 2^\vars$ be a set of non-empty \emph{scopes}, i.e., $(\vars,\constraints)$ can be seen as an undirected hypergraph. The triplet $(\domain,\vars,\constraints)$ defines the \emph{structure} of a (weighted) CSP and will be fixed throughout the paper. By
\begin{equation}
T = \ourset{(\scope,k)\mid \scope\in \constraints, \; k \in \domain^\scope}
= \bigcup_{\scope\in \constraints} \TPA{\scope} \quad\text{where}\quad \TPA{\scope}=\ourset{(\scope,k) \mid k \in \domain^\scope}
\label{eq:tuples}
\end{equation}
we denote the set of \emph{tuples\/}, partitioned into sets $\TPA{\scope}$, $\scope\in\constraints$. We say that an assignment $x\in\domain^\vars$ \emph{uses} a tuple $t=(S,k)\in T$ if $x[S]=k$ where  $x[\scope]$ denotes the restriction of~$x$ onto the set $\scope\subseteq \vars$, i.e., for $\scope=\{i_1, ..., i_{|\scope|}\}$ we have  $x[\scope]=(x_{i_1}, ..., x_{i_{|\scope|}})$ (where the order of the components is defined by the total order on~$\scope$ inherited from some arbitrary fixed total order on~$\vars$). Each assignment~$x\in\domain^\vars$ uses exactly one tuple from each~$\TPA{\scope}$.

An instance of the \emph{constraint satisfaction problem (CSP)} is defined by the quadruple $(\domain,\vars,\constraints,\CSP)$ where $\CSP\subseteq T$ is the set of \emph{allowed tuples} (while the tuples $T-\CSP$ are \emph{forbidden}). As the CSP structure $(\domain,\vars,\constraints)$ will be always the same, we will refer to the CSP instance only as~$\CSP$ (in other words, in the sequel we identify CSP instances with subsets of~$T$). An assignment $x\in\domain^\vars$ is a \emph{solution} to a CSP $\CSP\subseteq T$ if it uses only allowed tuples, i.e., $(\scope,x[\scope])\in\CSP$ for all $\scope\in \constraints$.
The set of all solutions to the CSP will be denoted by $\SOL(\CSP)\subseteq D^V$. The CSP is \emph{satisfiable} if $\SOL(\CSP)\neq\emptyset$, otherwise it is \emph{unsatisfiable}.

The \emph{weighted constraint satisfaction problem (WCSP)\/}\footnote{The WCSP is also known under different names, e.g., as the finite-valued CSP \cite{thapper2016complexity,kolmogorov2015power}, discrete energy minimization~\cite{Kappes-study-2015}, or maximum a posteriori (MAP) inference in graphical models \cite{Bogdan-book-2019}. It is also the main task in cost function networks~\cite{cooper2020valued}.} seeks to find an assignment $x\in \domain^\vars$ that maximizes the function 
\begin{equation}
\sum_{\scope\in\constraints} f_\scope(x[\scope])
\label{eq:energy}
\end{equation}
where $f_\scope\colon\domain^\scope\to\R$, $\scope\in\constraints$, are given \emph{weight functions}. All the weights (i.e., the values of the weight functions) together can be seen as a vector $f\in\R^T$, such that for $t=(\scope,k)\in T$ we have $f_t=f_\scope(k)$. The WCSP instance is defined by the quadruple $(\domain,\vars,\constraints,f)$. However, as the structure $(\domain,\vars,\constraints)$ will be always the same, we will refer to WCSP instances only as~$f$ (in other words, we identify WCSP instances with vectors from~$\R^T$).

\begin{example}
For example, if $\vars = \{1,2,3,4\}$, $\constraints=\{\{1\},\{2\},\{2,3\},\{1,4\},\{2,3,4\}\}$, and $\domain=\{\val{a,b}\}$, then we want to maximize the expression
\begin{equation*}
    f_{\{1\}}(x_1)+f_{\{2\}}(x_2)+f_{\{2,3\}}(x_2,x_3)+f_{\{1,4\}}(x_1,x_4)+f_{\{2,3,4\}}(x_2,x_3,x_4)
\end{equation*}
over $x_1,x_2,x_3,x_4\in \{\val{a,b}\}$. We have, e.g.,
\begin{equation*}
    \TPA{{\{2,3\}}} = \{(\{2,3\},(\val{a,a})), (\{2,3\},(\val{a,b})), (\{2,3\},(\val{b,a})), (\{2,3\},(\val{b,b}))\}.
\end{equation*}
\end{example}

\begin{remark}
In some formalisms~\cite{Cooper-AI-2010,nguyen2017triangle}, the objective~\eqref{eq:energy} is to be minimized. For our purposes, these settings are equivalent and the results for minimization problems are analogous as one can invert the sign of all weights and maximize instead. Next, some papers consider only non-negative weights and the empty (nullary) scope~$\emptyset\in\constraints$ whose weight~$f_\emptyset$ constitutes a bound on the WCSP optimal value~\cite{Cooper-AI-2010,nguyen2017triangle}. However, we will later need both positive and negative weights in a WCSP, so we require~$\emptyset\notin\constraints$ to simplify notations (also, with both positive and negative weights, $f_\emptyset$ would not yield a bound on the optimal value).
\end{remark}

We will use another notation for the WCSP objective, which is common in machine learning, see, e.g.,~\cite[\S3]{Wainwright08}. We define an indicator map $\phi\colon D^V\to\{0,1\}^T$ by
\begin{equation}
\phi_t(x) = \iverson{x[\scope]=k} \quad \text{ for each } t=(S,k)\in T
\label{eq:phi}
\end{equation}
where $\iverson{\cdot}$ denotes the \emph{Iverson bracket}, which equals~1 if the logical expression in the bracket is true and~0 if it is false. The WCSP objective~\eqref{eq:energy} can now be written as the dot product
\begin{equation}
\sum_{\scope\in\constraints} f_\scope(x[\scope])
= \sum_{t\in T} f_t\phi_t(x)
= \langle f,\phi(x)\rangle .
\label{eq:energy'}
\end{equation}
This makes explicit that the WCSP objective is linear in the weight vector~$f$. The WCSP optimal value is
\begin{equation}\label{eq:optimization_task}
\max_{x\in \domain^\vars} \VAL fx
= \max_{\mu\in M} \<f,\mu\>
\end{equation}
where
\begin{equation}\label{eq:M}
M = \phi(\domain^\vars) = \ourset{\phi(x)\mid x\in D^V} \subseteq \{0,1\}^T .
\end{equation}
Note that $M$~is defined only by the structure $(D,V,C)$.

\section{Bounding the WCSP Optimal Value}\label{se:bounding}

We define the function $\UB\colon \R^T\to\R$ by
\begin{equation}\label{eq:def_ub}
\UB(f) = \sum_{\scope\in \constraints} \max_{k \in \domain^\scope}f_\scope(k) = \sum_{\scope \in \constraints}\max_{t \in \TPA{\scope}} f_t.
\end{equation}
This is a convex piecewise-affine function. For $f\in\R^T$, we call a tuple $t=(S,k)\in T$ \emph{active\/}\footnote{\label{foot:toLP} Our term `active tuple' comes from the term `active inequality'. Indeed, \eqref{eq:def_ub} can be calculated as the minimum of $\sum_{\scope\in \constraints}z_S$ subject to $z_S\ge f_t\;\forall t\in \TPA{\scope}$, where $z_S\in\R$ are auxiliary variables. At optimum, we have $z_S=\max_{t \in \TPA{\scope}} f_t$ and an inequality $z_S\ge f_t$ is active if and only if tuple~$t$ is active.} if
\begin{equation}\label{eq:active}
f_t = \max_{t'\in \TPA{\scope}} f_{t'} .
\end{equation}
The set of all tuples that are active for~$f$ is denoted\footnote{The set $A^*(f)$ corresponds to the notion of~Bool($f$) in~\cite{Cooper-AI-2010}. The characteristic vector of the set $A^*(f)$ was denoted~$\bar f$ in~\cite{tourani2018mplp++,Werner-PAMI07}, $\lceil f\rceil$ in~\cite{Werner-PAMI-2010}, and mi$[f]$ in~\cite{Bogdan-book-2019}.} by $A^*(f)\subseteq T$. Note, $A^*(f)\subseteq T$ can be interpreted as a CSP.

\begin{theorem}[\cite{Werner-PAMI07}]\label{th:ub}
For every WCSP $f\in\R^T$ and every assignment $x\in \domain^\vars$ we have:
\begin{enumerate}
\item[(a)] $\UB(f)\ge \VAL fx$,
\item[(b)] $\UB(f)=\VAL fx$ if and only if $x\in\SOL(A^*(f))$.
\end{enumerate}
\end{theorem}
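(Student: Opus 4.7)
The plan is to prove both parts by reducing everything to a term-by-term comparison between the two sums. The key observation is that both $B(f)$ and $\langle f,\phi(x)\rangle$ decompose as sums over scopes $S \in C$, where the $S$-th term of $\langle f,\phi(x)\rangle$ is $f_S(x[S])$ (since $x$ uses exactly one tuple from each $T_S$, namely $(S,x[S])$), while the $S$-th term of $B(f)$ is $\max_{k \in D^S} f_S(k)$.

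For part (a), I would simply note that for each scope $S \in C$, the value $f_S(x[S])$ is one of the quantities $f_S(k)$ ranged over in the maximum, hence $f_S(x[S]) \le \max_{k \in D^S} f_S(k)$. Summing this inequality over all $S \in C$ yields $\langle f,\phi(x)\rangle \le B(f)$.

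For part (b), the sum inequality in (a) is a sum of non-positive differences $f_S(x[S]) - \max_{k \in D^S} f_S(k) \le 0$; therefore equality in the sum is equivalent to equality in each term, i.e., $f_S(x[S]) = \max_{k \in D^S} f_S(k)$ for every $S \in C$. By the definition of an active tuple in~\eqref{eq:active}, this equality for scope $S$ is exactly the statement that the tuple $(S,x[S])$ belongs to $A^*(f)$. Quantifying over all $S \in C$, the condition becomes $(S,x[S]) \in A^*(f)$ for all $S \in C$, which is precisely the definition of $x$ being a solution to the CSP $A^*(f)$, i.e., $x \in \SOL(A^*(f))$.

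No step is genuinely hard here — the proof is essentially bookkeeping. The only mild subtlety is making sure the indexing is consistent: in $\langle f,\phi(x)\rangle = \sum_{t \in T} f_t \phi_t(x)$, exactly one tuple $t$ per scope $S$ has $\phi_t(x) = 1$ (the tuple $(S,x[S])$), so the sum collapses to $\sum_{S \in C} f_S(x[S])$, matching the scope-wise comparison used above.
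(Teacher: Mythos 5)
Your proof is correct and follows exactly the paper's argument: a scope-by-scope comparison of $\langle f,\phi(x)\rangle$ with $\UB(f)$, with equality forced termwise in part (b), which is precisely the definition of $(S,x[S])\in A^*(f)$ for all $S\in C$. You have merely spelled out the details that the paper leaves as ``straightforward from~\eqref{eq:energy} and~\eqref{eq:def_ub}''.
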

\begin{proof}
Statement (a) can be checked by comparing expressions~\eqref{eq:energy} and~\eqref{eq:def_ub} term by term.

Statement (b) says that $\UB(f)=\VAL fx$ if and only if $(\scope,x[\scope])\in A^*(f)$ for all~$\scope\in\constraints$. This is again straightforward from~\eqref{eq:energy} and~\eqref{eq:def_ub}.
\end{proof}

\noindent
Theorem~\ref{th:ub} says that $B(f)$ is an \emph{upper bound\/} on the WCSP optimal value. Moreover, it shows that $\UB(f)=\VAL fx$ implies that $x$~is a maximizer of the WCSP objective~\eqref{eq:energy}.

\begin{example}\label{ex:example_notation}
Let~$\vars=\{1,2\}$, $\domain=\{\val a,\val b\}$, and $\constraints=\{\{1\},\{2\},\{1,2\}\}$. For this structure, the set of tuples is
\begin{align}\label{eq:tuples_example}
\begin{split}
    T= \{&(\{1\},\val a),(\{1\},\val b),(\{2\},\val a),(\{2\},\val b),\\
    &(\{1,2\},(\val a,\val a)),(\{1,2\},(\val a,\val b)),(\{1,2\},(\val b,\val a)),(\{1,2\},(\val b,\val b))\}.
\end{split}
\end{align}
For assignment $x=(\val a,\val b)\in\domain^\vars$ (i.e., $x_1=\val a$, $x_2=\val b$), we have $\phi(x) = (1,0,0,1,0,1,0,0)\in\{0,1\}^T$ where the order of the tuples is given by~\eqref{eq:tuples_example}. 

An example of a WCSP~$f$ with this structure is shown in Figure~\ref{fig:example_notation}. The set of tuples active for~$f$ is
\begin{equation}
    A^*(f)= \{(\{1\},\val b),(\{2\},\val a),(\{1,2\},(\val b,\val a)),(\{1,2\},(\val b,\val b))\}
\end{equation}
and the weight vector reads $f=(3,4,6,2,-2,-4,1,1)\in\R^T$ (where the ordering is again given by~\eqref{eq:tuples_example}). Thus, the objective value of WCSP~$f$ for $x=(\val a,\val b)$ is $\VAL fx=3+2-4=1$. The upper bound equals~$\UB(f)=4+6+1=11$ and is tight because the CSP~$A^*(f)$ is satisfiable (recall Theorem~\ref{th:ub}). In particular, $\VAL{f}{\val b,\val a}=11$. 
\end{example}

\begin{figure}
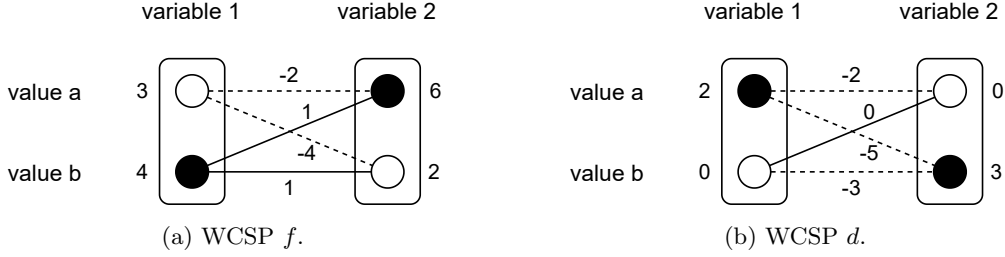

\centering
\begin{subfigure}{0.3926\textwidth}
\centering
\includegraphics[width=\textwidth]{example_notation}
\caption{WCSP~$f$.}
\label{fig:example_notation}
\end{subfigure}\hspace{1cm}
\begin{subfigure}{0.3926\textwidth}
\centering
\includegraphics[width=\textwidth]{zeroproblem_captions_small_value}
\caption{WCSP~$d$.}
\label{fig:zeroprob_example}
\end{subfigure}
\caption{Visualisations of two WCSPs~$f$ and~$d$ with structure as in Example~\ref{ex:example_notation}. Variables (elements of~$\vars$) are depicted as rounded rectangles, tuples (elements of~$T$) as circles and line segments, and weights~$f_t$ (and~$d_t$) are written next to the circles and line segments.
Black circles and full lines indicate active tuples, whereas white nodes and dashed lines indicate non-active tuples.}
\label{fig:examples_intro}
\end{figure}

\subsection{Minimal Upper Bound over Reparametrizations}\label{sec:reparam}

We say that a WCSP $f\in\R^T$ is \emph{reparametrization\/} of a WCSP $g\in\R^T$ (also known as an equivalence-preserving transformation of~$g$)~\cite{trws,Wainwright08,schlesinger1976,Werner-PAMI07,Werner-PAMI-2010,Bogdan-book-2019,cooper2007optimal,Cooper-AI-2010,tourani2020taxonomy} if
\begin{equation}
\VAL fx=\VAL gx \quad \forall x\in\domain^\vars .
\label{eq:repar}
\end{equation}
That is, $f-g\in M^\perp$ where
\begin{equation}\label{eq:Mperp}
M^\perp = \ourset{d\in\R^T\mid \<d,\mu\>=0\; \forall \mu\in M}
= \ourset{d\in\R^T\mid \<d,\phi(x)\>=0\; \forall x\in \domain^\vars}
\end{equation}
is the orthogonal space \cite[Chapter~1]{zalinescu2002convex} of the set~\eqref{eq:M}. Here, `$d$' stands for `direction' but note that any $d\in M^\perp$, as a vector from $\R^T$, can be also seen as a standalone WCSP. The set~$M^\perp$ is a subspace of~$\R^T$, consisting of all WCSPs that have zero objective value for all assignments. Although $M^\perp$ is defined by an exponential number of equalities in~\eqref{eq:Mperp}, it has a simple, polynomial-sized description (for binary WCSP see \cite[\S{B}]{Werner-PAMI07}, for WCSPs of any arity see~\cite[\S3.2]{Werner-PAMI-2010}). An example of WCSP $d\in M^\perp$ is in Figure~\ref{fig:zeroprob_example}. The set of all reparametrizations of~$f$ is the affine subspace\footnote{Note, the symbol~`$+$' in the expression $f+M^\perp$ denotes the sum of a vector and a set of vectors.} $f+M^\perp=\ourset{f+d\mid d\in M^\perp}$. Clearly, the binary relation `is a reparametrization of' (on the set of WCSPs with a fixed structure) is reflexive, transitive and symmetric, hence an equivalence.

Given a WCSP $g\in\R^T$, it is a natural idea to minimize the upper bound on its optimal value by reparametrizations:
\begin{equation}
\min\ourset{ \UB(f) \mid \text{$f$ is a reparametrization of~$g$} }
\;\;=\;\;
\min_{f\in g+M^\perp} \UB(f) .
\label{eq:LPrelax}
\end{equation}
By introducing auxiliary variables (as in Footnote~\ref{foot:toLP}), this problem can be transformed to a linear program, which is the \emph{dual LP relaxation\/} of the WCSP~$g$~\cite{schlesinger1976,Werner-PAMI07,Werner-PAMI-2010,Bogdan-book-2019}. Every $f$~feasible for~\eqref{eq:LPrelax} satisfies
\begin{equation}
\UB(f) \ge \VAL fx = \VAL gx \quad \forall x\in D^V ,
\label{eq:LPrelax-ineq}
\end{equation}
i.e., $\UB(f)$ is an upper bound on the optimal value $\max_x\VAL gx$ of WCSP~$g$.
If inequality~\eqref{eq:LPrelax-ineq} holds with equality for some~$x$, then $f$~is optimal for~\eqref{eq:LPrelax} and the LP relaxation is tight. Necessary and sufficient conditions for optimality can be obtained from complementary slackness, see~\cite{Werner-PAMI07,Werner-PAMI-2010,Bogdan-book-2019}.

Problem~\eqref{eq:LPrelax} has been widely studied \cite{schlesinger1976,Werner-PAMI07,cooper2007optimal,Cooper-AI-2010,Zivny-VCSPbook-2012,Bogdan-book-2019,batra2011tighter} and many approaches for its (approximate) large-scale optimization have been proposed, typically based on block-coordinate descent \cite{mplp,tourani2018mplp++,tourani2020taxonomy,Werner-PAMI07,trws,komodakis2008beyond,sontag2009tree,sontag2008tightening} or constraint propagation~\cite{Cooper-AI-2010,Koval76,Werner-PAMI07,cooper2004cyclic,nguyen2017triangle}. 
If $f$~is optimal for~\eqref{eq:LPrelax}, then the CSP $\CSP^*(f)$ has a non-empty pairwise-consistency (PWC) closure (for binary WCSPs, PWC reduces to arc consistency) \cite{Werner-PAMI-2010}. We conjecture that PWC is in general the strongest level of local consistency of $\CSP^*(f)$ that can be achieved by reparametrizations without enlarging the WCSP structure (i.e., without introducing new weight functions).

We remark that some approaches \cite{cooper2007optimal,Cooper-AI-2010} achieve only (generalized) arc consistency rather than PWC because they optimize over a subset of all possible reparametrizations corresponding to a subspace of~$M^\perp$. In this case, WCSPs~$f$ optimal for~\eqref{eq:LPrelax} have been called \emph{optimally soft arc consistent (OSAC)}.

\subsection{Minimal Upper Bound over Super-Reparametrizations}\label{sec:super-reparam}

We say that a WCSP $f\in\R^T$ is a \emph{super-reparametrization\/}\footnote{Super-reparametrizations were called \emph{virtual potentials\/} in~\cite{komodakis2008beyond} and \emph{sup-reparametrizations\/} in~\cite{sontag2009tree}.} of a WCSP $g\in\R^T$ if
\begin{equation}
\VAL fx \ge \VAL gx \quad \forall x\in\domain^\vars .
\label{eq:suprepar}
\end{equation}
That is, $f-g\in M^*$ where
\begin{equation}\label{eq:M*}
M^* = \ourset{d\in\R^T\mid \<d,\mu\> \ge0\; \forall \mu\in M } 
= \ourset{d\in\R^T\mid \<d,\phi(x)\> \ge0\; \forall x\in \domain^\vars}
\end{equation}
is the dual cone \cite[Chapter 1]{zalinescu2002convex} to the set~\eqref{eq:M}. It is a polyhedral convex cone, consisting of the WCSPs that have nonnegative objective value for all assignments. This cone contains a line because $M^\perp\subseteq M^*$ and the subspace~$M^\perp$ is non-trivial (assuming $|V|>1$). Precisely, we have $M^*\cap(-M^*)=M^\perp$ where~$-M^*=\{-d\mid d\in M^*\}$. The set of all super-reparametrizations of~$f$ is the translated cone $f+M^*=\ourset{f+d\mid d\in M^*}$. For a given $d\in\R^T$, deciding whether $d\notin M^*$ is NP-complete, as shown later in Corollary~\ref{co:NPh_cone}.

The binary relation `is a super-reparametrization of' (on the set of  WCSPs with a fixed structure) induced by the convex cone $M^*$ is reflexive and transitive, hence a preorder. It is not antisymmetric: $f-g\in M^*$ and $g-f\in M^*$ does not imply $f=g$ but merely $f-g\in M^\perp$, i.e., that $f$~is a reparametrization of~$g$. This is because the cone~$M^*$ may contain a line, see~\cite[\S2]{jahn2011new} and~\cite[\S2.4]{boyd2004convex}.

\begin{remark}\label{re:marginal_cone}
The optimal value~\eqref{eq:optimization_task} of a WCSP~$f$ can be also written as
\begin{equation}\label{eq:optimization_task_M}
\max_{\mu\in M} \<f,\mu\>
= \max_{\mu\in\conv M} \<f,\mu\>
\end{equation}
where $\conv$ denotes the convex hull operator \cite{boyd2004convex}. The equality in~\eqref{eq:optimization_task_M} follows from the well-known fact that a linear function on a polytope attains its maximum in at least one vertex of the polytope \cite[Corollary~3.44]{Bogdan-book-2019}. The set $\conv M\subseteq[0,1]^T$ is known as the \emph{marginal polytope} and has the central role in approaches to WCSP based on linear programming (see \cite{Wainwright08,Bogdan-book-2019} and references therein). It is easy to show that 
\begin{equation}
M^* = (\conv M)^* = (\cone M)^*
\end{equation}
where $\cone$ denotes the conic hull operator \cite{boyd2004convex} and $^*$ the dual cone operator. Thus, \eqref{eq:M*}~can also be seen as the dual cone to the marginal polytope which, to the best of our knowledge, has not been mentioned before.
\end{remark}

Following~\cite{komodakis2008beyond}, we consider the problem
\begin{equation}
\min\ourset{ \UB(f) \mid \text{$f$ is a super-reparametrization of~$g$} }
\;\;=\;\;
\min_{f\in g+M^*} \UB(f).
\label{eq:LP}
\end{equation}
Again, this can be reformulated as a linear program. Every~$f$ feasible for~\eqref{eq:LP} (i.e., every super-reparametrization of~$g$) satisfies
\begin{equation}\label{eq:LP-ineq}
\UB(f) \ge \VAL fx \ge \VAL gx \qquad \forall x\in \domain^\vars.
\end{equation}
The next theorem characterizes optimal solutions:

\begin{theorem}\label{th:optimality_cond}
Let $f$ be feasible for~\eqref{eq:LP}. The following are equivalent:
\begin{enumerate}
\item[(a)] $f$ is optimal for~\eqref{eq:LP}.
\item[(b)] $\displaystyle \UB(f) = \max_{x\in \domain^\vars} \VAL fx = \max_{x\in \domain^\vars} \VAL gx$
\item[(c)] CSP~$A^*(f)$ has a solution~$x$ satisfying $\VAL fx=\VAL gx$.
\end{enumerate}
\end{theorem}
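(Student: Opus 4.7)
My plan is to prove the equivalences in the cycle (a)$\Rightarrow$(b)$\Leftrightarrow$(c) and separately (b)$\Rightarrow$(a), so that the only nontrivial ingredient is exhibiting the exactness of the LP~\eqref{eq:LP} for direction (a)$\Rightarrow$(b).

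For (b)$\Leftrightarrow$(c), I would apply Theorem~\ref{th:ub}(b) twice. Assume (c) and let $x\in\SOL(A^*(f))$ satisfy $\VAL fx=\VAL gx$; then Theorem~\ref{th:ub}(b) gives $\UB(f)=\VAL fx$, and combined with $\UB(f)\ge\max_x\VAL fx\ge\VAL fx$ and the super-reparametrization inequality $\VAL fx\ge \VAL gx$ together with $\VAL gx\le\max_x\VAL gx\le \UB(f)$ (from~\eqref{eq:LP-ineq}), we sandwich everything into (b). Conversely, assume (b) and pick any $x^*\in\argmax_x\VAL g{x^*}$. Then
\[
\VAL f{x^*}\ge\VAL g{x^*}=\max_x\VAL gx=\UB(f)\ge\VAL f{x^*},
\]
so all quantities are equal, $\VAL f{x^*}=\VAL g{x^*}=\UB(f)$, and Theorem~\ref{th:ub}(b) yields $x^*\in\SOL(A^*(f))$, which is (c).

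For (b)$\Rightarrow$(a), note that every feasible $f'\in g+M^*$ satisfies $\UB(f')\ge\max_x\VAL{f'}x\ge\max_x\VAL gx=\UB(f)$ by~\eqref{eq:LP-ineq} and~(b), so $f$ is indeed a minimizer.

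The main obstacle is (a)$\Rightarrow$(b), i.e.\ showing the LP is exact. My plan is to explicitly construct a super-reparametrization $f^\star$ of $g$ achieving $\UB(f^\star)=\max_x\VAL gx$; then optimality of $f$ yields $\UB(f)\le \UB(f^\star)=\max_x\VAL gx$, and combined with $\UB(f)\ge\max_x\VAL fx\ge \max_x\VAL gx$ from~\eqref{eq:LP-ineq} we get (b). The construction: pick an arbitrary scope $S_0\in\constraints$ and define
\[
f^\star_{S_0}(k)=\max\ourset{\VAL gx\mid x\in\domain^\vars,\; x[S_0]=k}\quad\text{for }k\in\domain^{S_0},
\]
and $f^\star_S\equiv 0$ for all other $S\in\constraints\setminus\{S_0\}$. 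Since $S_0\ne\emptyset$ and the domain is finite, each such maximum is well-defined. Then for every $x\in\domain^\vars$, the super-reparametrization property $\VAL{f^\star}x = f^\star_{S_0}(x[S_0]) \ge \VAL gx$ holds by construction, while
\[
\UB(f^\star)=\max_{k\in\domain^{S_0}} f^\star_{S_0}(k)=\max_{k}\max_{x:\, x[S_0]=k}\VAL gx=\max_{x\in\domain^\vars}\VAL gx.
\]
This closes the cycle.
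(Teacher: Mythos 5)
Your proof is correct and follows essentially the same route as the paper: both arguments reduce everything to exhibiting an explicit feasible super-reparametrization whose bound equals $\max_x\VAL gx$ (the paper uses the constant witness $f_t=m/|\constraints|$, you concentrate the max-marginal $\max\{\VAL gx\mid x[S_0]=k\}$ onto a single scope $S_0$ — both work), and then sandwich via~\eqref{eq:LP-ineq} and Theorem~\ref{th:ub}. The remaining implications (b)$\Leftrightarrow$(c) via Theorem~\ref{th:ub}(b) match the paper's argument.
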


\begin{proof}
 (a)$\Leftrightarrow$(b): Denote $m=\max_x\VAL gx$, which by~\eqref{eq:LP-ineq} implies $B(f)\ge m$. To see that this bound is attained, define~$f$ by $f_t=m/|\constraints|$ for all $t\in T$. It can be checked from~\eqref{eq:energy} and~\eqref{eq:def_ub} that $B(f)=\VAL fx=m$ for all~$x$, so $f$~is feasible and optimal.

 (b)$\Rightarrow$(c): Since every feasible~$f$ satisfies~\eqref{eq:LP-ineq}, (b)~implies $\UB(f)=\VAL fx=\VAL gx$ for some~$x$. By Theorem~\ref{th:ub}(b), this implies~(c).

 (c)$\Rightarrow$(b): By Theorem~\ref{th:ub}(b) together with~\eqref{eq:LP-ineq}, (c)~implies $\UB(f)=\VAL fx=\VAL gx$ for some~$x$. Statement~(b) now follows from Theorem~\ref{th:ub}(a).
\end{proof}

Theorem~\ref{th:optimality_cond} in particular says that the optimal value of~\eqref{eq:LP} is equal to the optimal value of WCSP~$g$ (this has been observed already in~\cite[Theorem~1]{komodakis2008beyond}). As stated in~\cite{komodakis2008beyond}, this is not surprising because the complexity of the WCSP is hidden in the exponential set of constraints of~\eqref{eq:LP}. Let us remark that for $f\in g+M^*$, deciding whether $f$~is optimal for~\eqref{eq:LP} is NP-complete, as shown later in Corollary~\ref{co:NPh_optimal}.

Theorem~\ref{th:optimality_cond} has a simple corollary:

\begin{theorem}\label{th:CSPcert}
Let $g\in\R^T$. CSP $\CSP^*(g)$ is satisfiable if and only if $\UB(g)\le\UB(f)$ for every $f\in g+M^*$.
\end{theorem}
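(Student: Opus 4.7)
The plan is to derive this immediately as a corollary of Theorem~\ref{th:optimality_cond} by specializing that theorem to the case $f := g$. The key observation is that $g$ itself is always feasible for problem~\eqref{eq:LP}, since $g - g = 0 \in M^*$, so $g \in g + M^*$. Consequently, the condition ``$B(g) \le B(f)$ for every $f \in g + M^*$'' is precisely the statement that $g$ is an optimal solution to~\eqref{eq:LP}.

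With this reformulation, I would apply the equivalence (a)$\Leftrightarrow$(c) of Theorem~\ref{th:optimality_cond}, taking the $f$ in that theorem to be our $g$ (and the $g$ in that theorem also to be our $g$, since~\eqref{eq:LP} is defined relative to the initial WCSP). Condition~(c) then reads: CSP $A^*(g)$ has a solution $x$ satisfying $\langle g,\phi(x)\rangle = \langle g,\phi(x)\rangle$. The equality is tautological, so condition~(c) collapses to exactly ``$A^*(g)$ is satisfiable'', which yields both directions of the desired equivalence.

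The only subtlety (rather than obstacle) worth flagging is to make sure the reader notices that plugging $f = g$ makes the hypothesis of Theorem~\ref{th:optimality_cond} hold trivially and makes the $\VAL fx = \VAL gx$ condition in~(c) vacuous; everything else is a bookkeeping restatement. No further calculation is needed.
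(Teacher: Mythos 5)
Your proposal is correct and is essentially identical to the paper's own proof, which likewise observes that the claim is just Theorem~\ref{th:optimality_cond} specialized to the feasible point $f=g$, where condition~(c) degenerates to plain satisfiability of $A^*(g)$. No gaps; your explicit remarks that $0\in M^*$ and that the equality $\VAL fx=\VAL gx$ becomes vacuous are exactly the bookkeeping the paper leaves implicit.
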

\begin{proof}
By Theorem~\ref{th:optimality_cond}, $\CSP^*(g)$ is satisfiable if and only if~\eqref{eq:LP} attains its optimum at the point $f=g$, i.e., $\UB(g)\le\UB(f)$ for every $f\in g+M^*$.
\end{proof}

\section{Iterative Method to Improve the Bound by\\Super-Reparametrizations}\label{se:optimizing_bound}

In this section, we present an iterative method to suboptimally solve~\eqref{eq:LP}. Starting from a feasible solution to~\eqref{eq:LP}, every iteration finds a new feasible solution with a lower objective, which by~\eqref{eq:LP-ineq} corresponds to decreasing the upper bound on the optimal value of the initial WCSP.

\subsection{Outline of the Method}

Consider a WCSP~$f$ feasible for~\eqref{eq:LP}, i.e., $f\in g+M^*$. By Theorem~\ref{th:optimality_cond}, a necessary (but not sufficient) condition for~$f$ to be optimal for~\eqref{eq:LP} is that CSP $\CSP^*(f)$ is satisfiable. By Theorem~\ref{th:CSPcert}, $\CSP^*(f)$ is satisfiable if and only if $\UB(f)\le \UB(f')$ for all $f'\in f+M^*$. In summary, we have the following implications and equivalences:
\begin{equation}
\begin{array}{ccc}
\text{$f$ is optimal for~\eqref{eq:LP}} &\implies& \text{CSP $A^*(f)$ is satisfiable} \\[.5ex]
\Big\Updownarrow && \Big\Updownarrow \\[1.5ex]
\UB(f)\le \UB(f') \;\;\forall f'\in g+M^* &\implies& \UB(f)\le \UB(f') \;\;\forall f'\in f+M^*
\end{array}
\label{eq:diagram}
\end{equation}
The left-hand equivalence is just the definition of the optimum of~\eqref{eq:LP}, the right-hand equivalence is Theorem~\ref{th:CSPcert}, and the top implication follows from Theorem~\ref{th:optimality_cond}. The bottom implication independently follows from transitivity of super-reparametrizations, which says that $f'\in f+M^*$ implies $f'\in g+M^*$ (assuming $f\in g+M^*$).

Suppose for the moment that we have an oracle that, for a given $f\in\R^T$, decides if $\CSP^*(f)$ is satisfiable and if it is not, finds some $f'\in f+M^*$ such that $\UB(f')<\UB(f)$ (which exists by Theorem~\ref{th:CSPcert}). By transitivity of super-reparametrizations, such~$f'$ is feasible for~\eqref{eq:LP}. This suggests an iterative scheme to improve feasible solutions to~\eqref{eq:LP}. \begin{samepage}We initialize $f^0:=g$ and then for $k=0,1,2,\dots$ repeat the following iteration:
\begin{center}
\framebox{
\begin{minipage}{.92\linewidth}
If CSP $A^*(f^k)$ is satisfiable, stop.\\ Otherwise, find $f^{k+1}\in f^k+M^*$ such that $\UB(f^{k+1})<\UB(f^k)$.
\end{minipage}
}
\end{center}
\end{samepage}
Note that transitivity of super-reparametrizations implies $f^k\in f^0+M^*$ for every~$k$, so every~$f^k$ is feasible for~\eqref{eq:LP} as expected.
An example of a single iteration is shown in Figure~\ref{fig:example_iteration:a} and~\ref{fig:example_iteration:b}.

\begin{figure}[t]
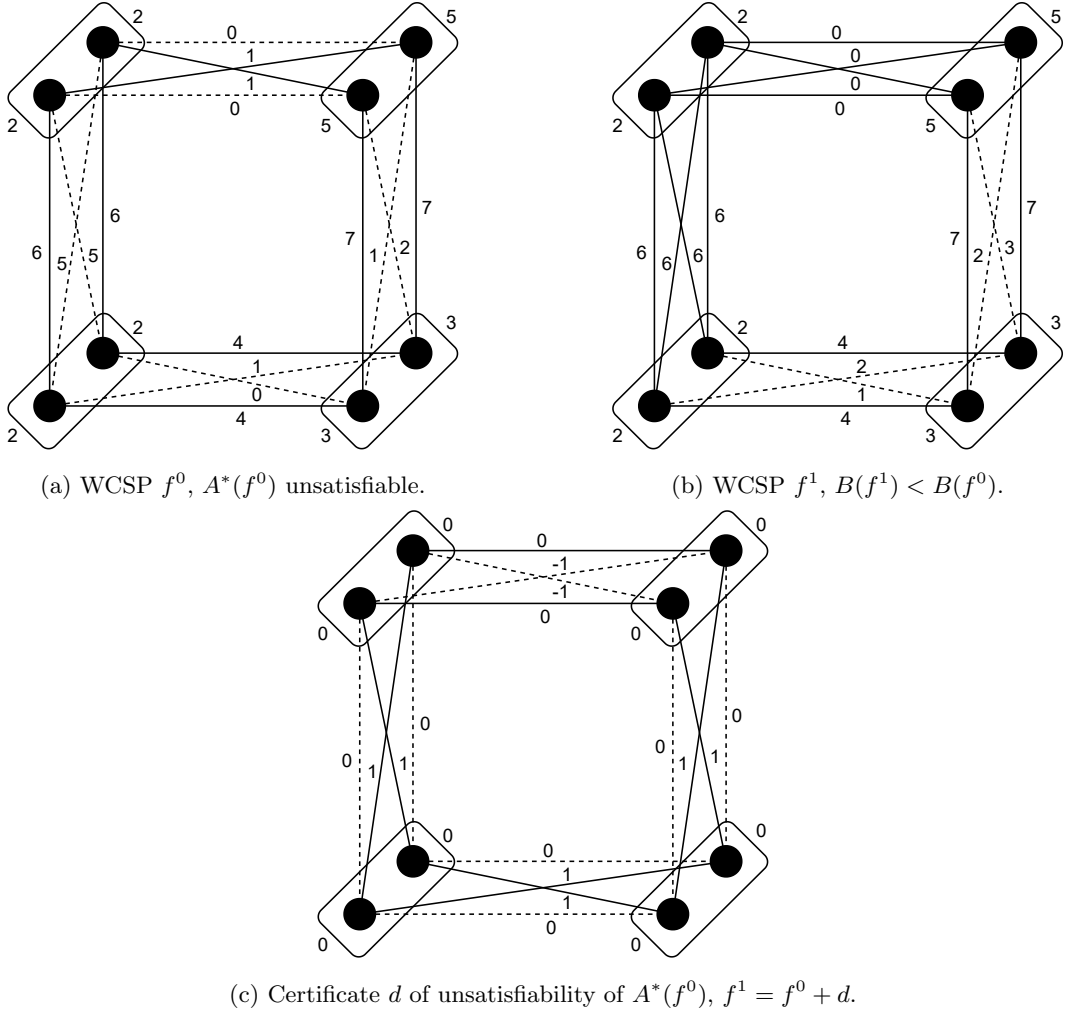

\begin{subfigure}{0.49\textwidth}
\centering
\includegraphics[width=0.848979\textwidth]{4cycle_a}
\caption{WCSP~$f^0$, $A^*(f^0)$ unsatisfiable.}\label{fig:example_iteration:a}
\end{subfigure}
\begin{subfigure}{0.49\textwidth}
\centering
\includegraphics[width=0.848979\textwidth]{4cycle_c}
\caption{WCSP~$f^1$, $\UB(f^1)<\UB(f^0)$.}\label{fig:example_iteration:b}
\end{subfigure}

\begin{subfigure}{\textwidth}
\centering
\includegraphics[width=0.416\textwidth]{4cycle_b}
\caption{Certificate~$d$ of unsatisfiability of~$A^*(f^0)$, $f^1=f^0+d$.}\label{fig:example_iteration:c}
\end{subfigure}

\caption{Example of one iteration on a binary WCSP whose (hyper)graph is a cycle of length 4.}
\label{fig:example_iteration}
\end{figure}

This iterative method belongs to the class of local search methods to solve~\eqref{eq:LP}: having a current feasible estimate~$f^k$, we search for the next estimate~$f^{k+1}$ with a strictly better objective within a neighborhood $f^k+M^*$ of~$f^k$. 
We can define \emph{local optima} of~\eqref{eq:LP} with respect to this method to be super-reparametrizations~$f$ of~$g$ such that $\CSP^*(f)$ is satisfiable.

\subsubsection{Properties of the Method}\label{se:properties_of_the_method}

\begin{figure}
    \centering
    \begin{picture}(310,220)
    \put(10,0){\includegraphics[width=300pt]{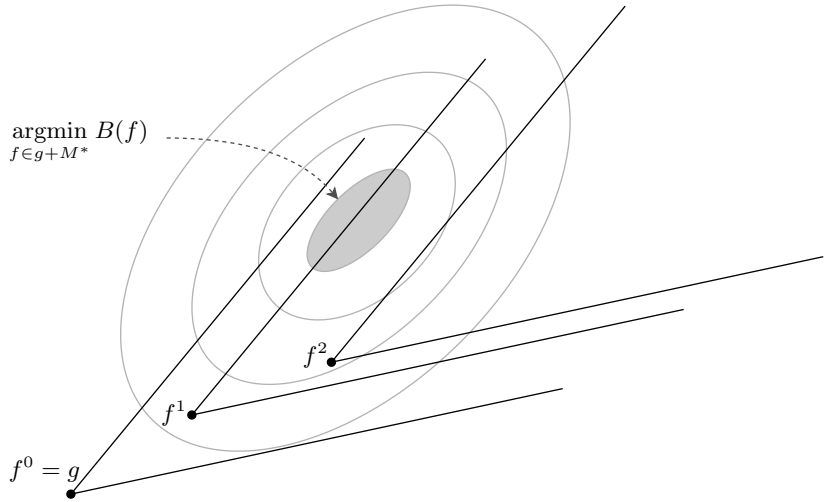}}
    \put(110,65){\small $f^2$}
    \put(57,43){\small $f^1$}
    \put(0,22){\small $f^0=g$}
    \put(0,150){\small $\argmin\limits_{f\in g+M^*} \UB(f)$}
    \end{picture}
    \caption{The shrinking of the search space of the iterative method. The figure illustrates the translated cones~$f^i+M^*$ and several contours of the objective~$\UB(f)$. After the second iteration, all global minima of the original problem (marked in grey) become inaccessible as the right hand side of~\eqref{eq:shrinking-gap} increases.}
    \label{fig:nested_cones_contours}
    \end{figure}
\begin{figure}
    \centering
    \begin{picture}(300,206.4)
    \put(0,11){\includegraphics[width=264pt]{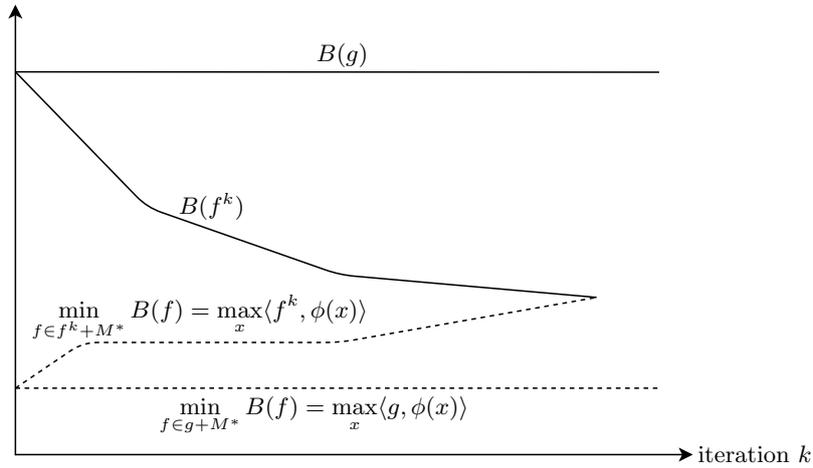}} 
    \put(66,108){\small $\UB(f^k)$}
    \put(11,69){\small $\displaystyle \min_{f\in f^k+M^*}\UB(f)=\max_x\VAL{f^k}x$}
    \put(58.8,32.4){\small $\displaystyle \min_{f\in g+M^*}\UB(f)=\max_x\VAL gx$}
    \put(117.6,165.6){\small $\UB(g)$}
    \put(260.4,14.4){\small iteration~$k$}
    \end{picture}
    \caption{Illustration to the iterative scheme: $\UB(g)$~and~$\UB(f^k)$ are shown by the full lines, $\max_x\VAL gx$~and~$\max_x\VAL{f^k}x$ are represented by the dashed lines.}
    \label{fig:plot_values}
\end{figure}

By transitivity of super-reparametrizations, for every~$k$ we have
\begin{equation}
f^{k+1}+M^* \subseteq f^k+M^*
\label{eq:cone<cone}
\end{equation}
which holds with equality if and only if $f^{k+1}\in f^k+M^\perp$ (i.e.,  $f^{k+1}$~is a reparametrization of~$f^k$). This shows that the search space of the method may shrink with increasing~$k$, in other words, a larger and larger part of the feasible set $f^0+M^*$ of~\eqref{eq:LP} is cut off and becomes forever inaccessible. If, for some~$k$, all (global) optima of~\eqref{eq:LP} happen to lie in the cut-off part, the method has lost any chance to find a global optimum. This is illustrated in Figure~\ref{fig:nested_cones_contours}.

This has the following consequence. Every~$f^k$ satisfies
\begin{equation}
\UB(f^k) \;\;\ge\;\; \min_{f\in f^k+M^*} \UB(f) = \max_{x\in D^V} \VAL{f^k}x .
\label{eq:shrinking-gap}
\end{equation}
In every iteration, the left-hand side of inequality~\eqref{eq:shrinking-gap} decreases and the right-hand side increases or stays the same due to~\eqref{eq:cone<cone}. If both sides meet for some~$k$, the CSP $\CSP^*(f^k)$ becomes satisfiable by Theorem~\ref{th:ub}(b) and the method stops. Monotonic increase of the right-hand side can be seen as `greediness' of the method: if we could choose $f^{k+1}$ from the initial feasible set $f^0+M^*$ rather than from its subset $f^k+M^*$, the right-hand side could also decrease. Any increase of the right-hand side is undesirable because the bounds~$\UB(f^k)$ in future iterations will never be able to get below it. This is illustrated in Figures~\ref{fig:plot_values} and~\ref{fig:finalexample}. Unlike in~\eqref{eq:LPrelax}, note that not every optimal assignment for WCSP~$f$ is optimal for WCSP~$g$. We will return to this in~\S\ref{se:theoretical_properties}.

If $A^*(f^k)$ is unsatisfiable, there are usually many vectors $f^{k+1}\in f^k+M^*$ satisfying $\UB(f^{k+1})<\UB(f^k)$. We should choose among them the one that does not cause `too much' shrinking of the search space and/or increase of the right-hand side of~\eqref{eq:shrinking-gap}. Inclusion~\eqref{eq:cone<cone} holds with equality if and only if $f^{k+1}\in f^k+M^\perp$, so whenever possible we should choose $f^{k+1}$ to be a reparametrization (rather than just a super-reparametrization) of~$f^k$. Unfortunately, we know of no other useful theoretical results to help us choose~$f^{k+1}$, so we are left with heuristics. One natural heuristic is to choose~$f^{k+1}$ such that the vector $f^{k+1}-f^k$ is sparse (i.e., has only a small number of non-zero components) and its positive components are small. Unfortunately, this can sometimes be too restrictive because, e.g., vectors from~$M^\perp$ can be dense and their components have unbounded magnitudes.

\begin{figure}
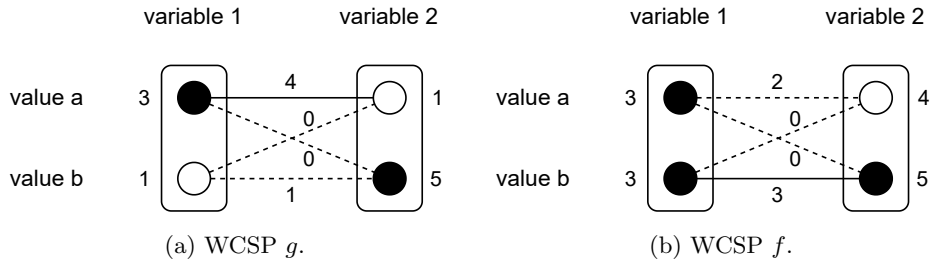

\centering
\begin{subfigure}[t]{0.3926\textwidth}
\centering
\includegraphics[width=\textwidth]{finalexample_added_labels}
\caption{WCSP~$g$.}
\end{subfigure}
\begin{subfigure}[t]{0.3926\textwidth}
\centering
\includegraphics[width=\textwidth]{finalexample2_added_labels}
\caption{WCSP~$f$.}
\end{subfigure}
\caption{WCSP~$f$ is a super-reparametrization of WCSP~$g$ and this pair of WCSPs satisfies $B(f)=11<B(g)=12$ and $\max_{x\in\domain^\vars} \VAL fx=11>\max_{x\in\domain^\vars} \VAL gx=8$. Assignment $x=(\val{b,b})$ is not optimal for~$g$ despite that $B(f)=\VAL fx$. The fact that~$f$ is a super-reparametrization of~$g$ can be verified by computing the objective value for each assignment, e.g., for assignment $x=({\val a},{\val a})$ we have $\VAL gx=3+4+1=8 \le \VAL fx=3+2+4=9$.}
\label{fig:finalexample}
\end{figure}

\subsubsection{Employing Constraint Propagation}\label{se:constraint_propagation_scheme}

So far we have assumed we can always decide if CSP $A^*(f)$~is satisfiable. This is unrealistic because the CSP is NP-complete.
Yet the approach remains applicable even if we detect unsatisfiability of~$A^*(f)$ only sometimes, e.g., using constraint propagation. Then our iteration changes to:
\begin{center}
\framebox{
\begin{minipage}{.92\linewidth}
Try to prove that CSP $A^*(f^k)$ is unsatisfiable.\\
If we succeed, find $f^{k+1}\in f^k+M^*$ such that $\UB(f^{k+1})<\UB(f^k)$.\\
If we fail, stop.
\end{minipage}
}
\end{center}
In this case, stopping points of the method will be even weaker local minima of~\eqref{eq:LP}, but they nevertheless might be still non-trivial and useful.

In the sequel we develop this approach in detail. In particular we show, if $\CSP^*(f^k)$ is unsatisfiable, how to find a vector $f^{k+1}\in f^k+M^*$ satisfying $\UB(f^{k+1})<\UB(f^k)$. We will do it in two steps. First (in~\S\ref{se:improving_directions}), given the CSP $\CSP^*(f^k)$ we find a direction $d\in M^*$ using constraint propagation. This direction is a \emph{certificate of unsatisfiability} of the CSP $\CSP^*(f^k)$ and, at the same time, an \emph{improving direction} for~\eqref{eq:LP}. Second (in~\S\ref{se:line_search}), given~$d$ and $f^k$, we find a \emph{step size} $\alpha>0$ such that $f^{k+1}=f^k+\alpha d$ and $B(f^k)>B(f^{k+1})$. An example of such a certificate of unsatisfiability is shown in Figure~\ref{fig:example_iteration:c}.

\subsubsection{Relation to Existing Approaches}

The Augmenting DAG algorithm~\cite{Koval76,Werner-PAMI07} and the VAC algorithm~\cite{Cooper-AI-2010} are (up to the precise way of computing certificates~$d$ and step sizes~$\alpha$) an example of the described approach, which uses arc consistency to attempt to prove unsatisfiability of~$A^*(f^k)$. In this favorable case, there exist certificates~$d\in M^\perp$, so we are, in fact, applying local search to~\eqref{eq:LPrelax} rather than~\eqref{eq:LP}. For stronger local consistencies, such certificates, in general, do not exist (i.e., inevitably $\VAL dx>0$ for some~$x$).

The algorithm proposed in~\cite{komodakis2008beyond} can be also seen as an example of our approach. It interleaves iterations using arc consistency (in fact, the Augmenting DAG algorithm) and iterations using cycle consistency.

As an alternative to our approach, stronger local consistencies can be achieved by introducing new weight functions (of possibly higher arity) into the WCSP objective~\eqref{eq:energy} and minimizing an upper bound by reparametrizations, as in~\cite{sontag2008tightening,batra2011tighter,Werner-PAMI-2010,Werner-PAMI-2015,nguyen2017triangle}. In our particular case, after each update $f^{k+1}=f^k+\alpha d$ we could introduce a new weight function with scope
\begin{equation}
\scope' = \bigcup\ourset{\scope \mid (S,k)\in T, \; d_S(k)\neq 0}
\end{equation}
and weights
\begin{equation}
f_{\scope'}(k) = -\alpha\sum_{\substack{\scope \in \constraints \\ \scope \subseteq \scope'}} d_\scope(k[\scope])
\end{equation}
where $k\in\domain^{\scope'}$. Notice that such an added weight function would not increase the bound~\eqref{eq:def_ub} since its weights are non-positive due to the fact that it needs to decrease the objective value for some assignments. In this view, our approach can be seen as enforcing stronger local consistencies but omitting these compensatory higher-order weight functions, thus saving memory.

Finally, the described approach can be seen as an example of the primal-dual approach~\cite{werner-dlask-maxsat} to optimize linear programs using constraint propagation. In detail, \cite{werner-dlask-maxsat}~proposed to construct the complementary slackness system for a given feasible solution and apply constraint propagation to detect if the system is satisfiable. If it is not satisfiable, this implies the existence of a certificate of unsatisfiability that can be used to improve the current solution. In our particular case, if~\eqref{eq:LP} is formulated as a linear program, then the complementary slackness conditions (expressed in terms of the dual variables) are equivalent to the optimality conditions stated in Theorem~\ref{th:optimality_cond} expressed as a set of linear equalities with an exponential number of non-negative variables. Applying constraint propagation on this system is in correspondence with constraint propagation on a CSP.

\subsection{Certificates of Unsatisfiability of CSP}\label{se:improving_directions}

Constraint propagation\footnote{We speak only about constraint propagation but the approach outlined in this section is applicable to any method that proves unsatisfiability of a CSP by iteratively forbidding subsets of tuples. In theory, as a stronger alternative one could also use any CSP solver that is augmented to provide a certificate of unsatisfiability (which is always possible, as we will discuss later in this section).\label{foo:generalizations_of_CP}} is an iterative algorithm, which in each iteration (executed by a \emph{propagator}) infers that some allowed tuples $R\subseteq A$ of a current CSP $A\subseteq T$ can be forbidden without changing its solution set, i.e., $\SOL(A)=\SOL(A-R)$, and forbids these tuples, i.e., sets $A:=A-R$. The algorithm terminates when it is no longer able to forbid any tuples (in which case the propagator returns $R=\emptyset$) or when it becomes explicit that the current CSP is unsatisfiable. The former usually happens when the CSP achieves some \emph{local consistency} level~$\Phi$. The latter happens if $A\cap T_S=\emptyset$ for some $S\in C$, which implies unsatisfiability of~$\CSP$ because\footnote{If $|\scope|=1$, this event is often called a `domain wipe-out'.} every assignment has to use  one tuple from each~$T_S$.

In this section, we show how to augment constraint propagation so that if it proves a CSP unsatisfiable, it also provides its \emph{certificate of unsatisfiability} $d\in M^*$. This certificate is needed as an improving direction for~\eqref{eq:LP}, as was mentioned in~\S\ref{se:constraint_propagation_scheme}.
First, in~\S\ref{se:deactivating_vectors}, we introduce a more general concept, \emph{deactivating directions}. One iteration of constraint propagation constructs an $R$-deactivating direction for the current CSP~$A$, which certifies that $\SOL(A)=\SOL(A-R)$. Then, in~\S\ref{se:composing}, we show how to \emph{compose} the deactivating directions obtained from individual iterations of constraint propagation to a single deactivating direction for the initial CSP. If the initial CSP has been proved unsatisfiable by the propagation, this composed deactivating direction is then its certificate of unsatisfiability.

\subsubsection{Deactivating Directions}\label{se:deactivating_vectors}

\begin{definition}\label{de:t-deactivating}
Let $\CSP\subseteq T$ and $\deactivating \subseteq \CSP$, $\deactivating\neq\emptyset$. An $\deactivating$-\emph{deactivating direction for CSP\/}~$\CSP$ is a vector $d \in M^*$ satisfying
\begin{enumerate}
    \item[(a)] $d_t < 0$ for all $t \in \deactivating$,
    \item[(b)] $d_t = 0$ for all $t \in \CSP-\deactivating$.
\end{enumerate}
\end{definition}

For fixed $\CSP$ and~$\deactivating$, all $\deactivating$-deactivating directions for~$\CSP$ form a convex cone. Here, we show one way of constructing a deactivating direction:

\begin{theorem}\label{th:deact_exists}
Let $R\subseteq \CSP\subseteq T$ be such that $\SOL(\CSP)=\SOL(\CSP-R)$ 
and~$\deactivating\neq\emptyset$.
Denote\,\footnote{The quantity $\delta>0$ is the number of scopes~$\scope$ such that $\TPA{\scope}$~contains at least one tuple from~$\deactivating$. In other words, for every assignment $x\in\domain^\vars$, $(\scope,x[\scope])\in \deactivating$ holds for at most~$\delta$ scopes. We remark that the value of~$\delta$ could be in some cases decreased, thus decreasing also the objective values~$\VAL dx$. However, deciding whether~\eqref{eq:deact_exists} is not an $\deactivating$-deactivating direction for~$\CSP$ for a given value~$\delta$ and~$\CSP$ is an NP-complete problem, see Theorem~\ref{th:NPh_deac}.}
\begin{equation}
 \delta=\ourabs{\ourset{\scope \in \constraints \mid \TPA{\scope}\cap \deactivating\neq \emptyset}}.
\end{equation}
Then vector $d\in\R^T$ with components
\begin{equation}\label{eq:deact_exists}
    d_t = \begin{cases}
    -1 & \text{if $t \in \deactivating$}\\
    \delta & \text{if $t \in T-A$}\\
    0 & \text{otherwise (i.e., $t\in A-R$)}
    \end{cases}
\end{equation}
is an $\deactivating$-deactivating direction for~$\CSP$.
\end{theorem}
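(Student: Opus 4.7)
The plan is to verify directly the three defining properties of an $\deactivating$-deactivating direction from Definition~\ref{de:t-deactivating}. Properties~(a) and~(b) are immediate by inspection of the piecewise formula~\eqref{eq:deact_exists}: tuples in~$\deactivating$ get the value $-1<0$, and tuples in $A-R$ get the value $0$. So the real content of the theorem is showing $d\in M^*$, i.e., that $\langle d,\phi(x)\rangle \ge 0$ for every assignment $x\in \domain^\vars$. Unfolding $\langle d,\phi(x)\rangle = \sum_{\scope\in\constraints} d_{(\scope,x[\scope])}$, I will classify, for each fixed~$x$, the scopes $\scope\in\constraints$ by which of the three cases of~\eqref{eq:deact_exists} the tuple $(\scope,x[\scope])$ falls into.

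First I would handle the case $x\in \SOL(\CSP)$. The hypothesis $\SOL(\CSP)=\SOL(\CSP-\deactivating)$ together with $\CSP-\deactivating\subseteq\CSP$ gives $\SOL(\CSP)=\SOL(\CSP-\deactivating)$, so $(\scope,x[\scope])\in \CSP-\deactivating$ for every $\scope\in\constraints$. Hence $d_{(\scope,x[\scope])}=0$ for all~$\scope$, and the inner product vanishes.

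For the remaining case, $x\notin\SOL(\CSP)$, let $\alpha$ denote the number of scopes~$\scope$ with $(\scope,x[\scope])\in T-\CSP$ and $\beta$ denote the number of scopes~$\scope$ with $(\scope,x[\scope])\in \deactivating$ (scopes with $(\scope,x[\scope])\in \CSP-\deactivating$ contribute $0$). By assumption $\alpha\ge 1$, since $x$ fails at least one scope. By the definition of~$\delta$ (this is the crucial observation I would highlight: every tuple used by~$x$ that lies in~$\deactivating$ belongs to some $\TPA{\scope}$ with $\TPA{\scope}\cap \deactivating\ne\emptyset$, and there are only $\delta$ such scopes), we have $\beta\le \delta$. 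Consequently
\begin{equation*}
\langle d,\phi(x)\rangle = \alpha\,\delta - \beta \;\ge\; \delta - \delta \;=\; 0,
\end{equation*}
which finishes the proof.

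I do not anticipate a real obstacle: the proof is a short case analysis whose only non-trivial ingredient is interpreting~$\delta$ as an upper bound on how many of the tuples $\{(\scope,x[\scope])\}_{\scope\in\constraints}$ can simultaneously lie in~$\deactivating$. The choice of coefficients $-1$ and~$\delta$ in~\eqref{eq:deact_exists} is precisely calibrated so that a single tuple outside~$\CSP$ compensates for all negative contributions, which is what makes $d$ land in the dual cone~$M^*$.
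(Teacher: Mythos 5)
Your proof is correct and follows essentially the same route as the paper's: both rest on the decomposition $\langle d,\phi(x)\rangle = \delta\cdot|\{\scope : (\scope,x[\scope])\in T-\CSP\}| - |\{\scope : (\scope,x[\scope])\in \deactivating\}|$, the observation that the second count is at most $\delta$, and the hypothesis $\SOL(\CSP)=\SOL(\CSP-\deactivating)$. The only difference is presentational — you argue directly by splitting on $x\in\SOL(\CSP)$ versus $x\notin\SOL(\CSP)$, whereas the paper assumes $\langle d,\phi(x)\rangle<0$ and derives a contradiction.
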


\begin{proof}
Conditions (a) and (b) of Definition~\ref{de:t-deactivating} are clearly satisfied, so it only remains to show that $d\in M^*$. We have
\begin{equation}\label{eq:deac_energy}
\VAL dx
= \sum_{t\in T} d_t\phi_t(x)
= \sum_{t\in \deactivating}-\phi_t(x) + \sum_{t\in T-A}\delta\phi_t(x)
= -n_1(x)+\delta n_2(x)
\end{equation}
where $n_1(x)=\ourabs{\ourset{\scope\in\constraints \mid (\scope,x[\scope])\in \deactivating}}$ and $n_2(x)=\ourabs{\ourset{\scope\in\constraints \mid (\scope,x[\scope])\in T-A}}$.

For contradiction, let $x\in D^V$ satisfy $\VAL dx<0$. This implies $n_1(x)>0$ and $n_2(x)=0$, where the latter is because $n_1(x)\le\delta$ by the definition of~$\delta$. That is, we have $(\scope^*,x[\scope^*])\in \deactivating$ for some $\scope^*\in\constraints$ and $(\scope,x[\scope])\in A$ for all~$\scope\in\constraints$. But the latter means $x\in \SOL(A)$ and the former implies $x\notin \SOL(A-\deactivating)$, a contradiction.
\end{proof}

\begin{theorem}\label{th:deact_implies}
Let $\CSP\subseteq T$ and $\deactivating \subseteq \CSP$. If there exists an $\deactivating$\nobreakdash-deactivating direction for~$\CSP$, then $\SOL(\CSP)=\SOL(\CSP-\deactivating)$.
\end{theorem}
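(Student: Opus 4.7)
The plan is to prove the two inclusions $\SOL(\CSP - \deactivating)\subseteq \SOL(\CSP)$ and $\SOL(\CSP) \subseteq \SOL(\CSP - \deactivating)$ separately. The first is immediate because $\CSP - \deactivating \subseteq \CSP$, so any assignment using only tuples from $\CSP - \deactivating$ certainly uses only tuples from~$\CSP$. All the work is in the reverse direction.

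For the nontrivial inclusion, I would take an arbitrary $x \in \SOL(\CSP)$ and evaluate $\langle d, \phi(x)\rangle$. By the definition of~$\phi$ in~\eqref{eq:phi}, each assignment uses exactly one tuple per scope, so
\begin{equation*}
\langle d, \phi(x)\rangle = \sum_{t\in T} d_t\phi_t(x) = \sum_{\scope\in\constraints} d_{(\scope,x[\scope])}.
\end{equation*}
Since $x \in \SOL(\CSP)$, for every $\scope\in\constraints$ the tuple $(\scope,x[\scope])$ lies in~$\CSP$, and hence lies either in $\deactivating$ or in $\CSP-\deactivating$. By conditions~(a) and~(b) of Definition~\ref{de:t-deactivating}, each summand $d_{(\scope,x[\scope])}$ is therefore strictly negative (if the tuple is in~$\deactivating$) or zero (otherwise).

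Now I would invoke $d\in M^*$, which by~\eqref{eq:M*} yields $\langle d,\phi(x)\rangle \ge 0$. Combined with the fact that the sum consists of non-positive terms, every summand must be zero, and so no tuple $(\scope,x[\scope])$ lies in~$\deactivating$. This means $(\scope,x[\scope])\in \CSP-\deactivating$ for all $\scope\in\constraints$, i.e.\ $x\in\SOL(\CSP-\deactivating)$, completing the proof. There is no real obstacle here: the argument is essentially a sign-counting observation exploiting the tension between $d\in M^*$ (forcing a non-negative sum) and the deactivating sign pattern (forcing non-positive summands at any solution of~$\CSP$).
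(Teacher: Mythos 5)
Your proof is correct and follows essentially the same route as the paper's: both hinge on the observation that for $x\in\SOL(\CSP)$ the value $\<d,\phi(x)\>$ is a sum of non-positive terms by conditions (a) and (b) of Definition~\ref{de:t-deactivating}, which must nevertheless be non-negative because $d\in M^*$. The paper phrases this as a contradiction for an $x\in\SOL(\CSP)-\SOL(\CSP-\deactivating)$ while you argue directly that every summand vanishes, but this is only a cosmetic difference.
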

\begin{proof} Observe that $\SOL(\CSP)=\SOL(\CSP-\deactivating)$ is equivalent to $\SOL(\CSP)\subseteq\SOL(\CSP-\deactivating)$ because forbidding tuples may only remove solutions, i.e., $\SOL$ is an isotone map (see~\S\ref{se:minimal_csp}).

Let~$d$ be an $\deactivating$-deactivating direction for~$\CSP$ and let~$x\in\SOL(\CSP)-\SOL(\CSP-\deactivating)$, so $(\scope,x[\scope])\in\deactivating$ for some~$\scope\in\constraints$. By~\eqref{eq:energy'}, we have $\VAL d{x}< 0$ because $d_\scope(x[\scope]) = 0$ for all $(\scope,x[\scope])\in \CSP-\deactivating$ by condition~(b) in~Definition~\ref{de:t-deactivating} and $d_{\scope}(x[\scope]) < 0$ for all $(\scope,x[\scope])\in \deactivating$ by condition~(a). This contradicts~$d\in M^*$.
\end{proof}

Combining Theorems~\ref{th:deact_exists} and~\ref{th:deact_implies} yields that for any~$R\subseteq A$ with~$R\neq\emptyset$, an $\deactivating$\nobreakdash-deactivating direction for~$\CSP$ exists if and only if $\SOL(\CSP)=\SOL(\CSP-\deactivating)$. Thus, any $\deactivating$-deactivating direction for~$A$ is a \emph{certificate} of the fact that $\SOL(\CSP)=\SOL(\CSP-\deactivating)$.

Unfortunately, vectors~$d$ calculated naively by~\eqref{eq:deact_exists} can have many non-zero components, which is undesirable as explained in~\S\ref{se:properties_of_the_method}. 
However, it is clear from Definition~\ref{de:t-deactivating} that if $A\subseteq A'\subseteq T$ and $d$~is an $R$-deactivating direction for~$A'$, then $d$~is an $R$-deactivating direction also for~$A$. Moreover, \eqref{eq:deact_exists}~shows that larger sets~$\CSP$ give rise to sparser vectors~$d$. This offers us a possibility to obtain a sparser $\deactivating$\nobreakdash-deactivating direction for~$\CSP$ if we can provide a superset $\CSP'\supseteq\CSP$ of the allowed tuples satisfying $\SOL(\CSP')=\SOL(\CSP'-R)$.

Given $A\subseteq T$ and $R\subseteq A$, finding a maximal (w.r.t.\ the partial ordering by inclusion) superset $A'\supseteq A$ such that $\SOL(A')=\SOL(A'-R)$ is closely related to finding a minimal unsatisfiable core\footnote{ Given an unsatisfiable CSP $A\subseteq T$, finding a maximal set $A'\supseteq A$ such that $A'$~is still unsatisfiable corresponds to finding a minimally unsatisfiable set of tuples~\cite{gregoire2007must}. This is a finer-grained (tuple-based rather than constraint-based) version of finding a minimal unsatisfiable core of a CSP~\cite{gregoire2008finding}. Note that we are looking here for a \emph{maximal} superset $A'$ in contrast to a \emph{minimal} unsatisfiable core because we define CSP instances by \emph{allowed} tuples while  cores are CSP instances defined by \emph{forbidden} tuples.} of an unsatisfiable CSP. While finding a maximal such subset is very likely intractable\footnote{The problem of finding a minimal unsatisfiable core has been designated in~\cite{gregoire2008finding} to be `highly intractable' based on results from~\cite{papadimitriou1985complexity}.}, for obtaining a `sparse enough' vector~$d$ it suffices to find a `large enough' such superset~$A'$.
Such a superset is often cheaply available as a side result of executing the propagator. Namely, we take $A'=T-P$ where $P$~is the set of forbidden tuples that were visited during the run of the propagator. Clearly, tuples not visited by the propagator could not be needed to infer $\SOL(\CSP)=\SOL(\CSP-R)$.
Note that $P$~need not be the same for each CSP instance, even for a fixed level of local consistency: for example, if the arc consistency closure of~$\CSP$ is empty, then $\CSP$~is unsatisfiable but a domain wipe-out may occur sooner or later depending on~$\CSP$, which affects which tuples needed to be visited.

Let us emphasize that an $\deactivating$-deactivating direction for $\CSP$ need not be always obtained using formula~\eqref{eq:deact_exists}, any other method can be used as long as $d$~satisfies Definition~\ref{de:t-deactivating}.
We will now give examples of deactivating directions corresponding to some popular constraint propagation rules. In these examples, we assume that our CSP contains all unary constraints (i.e., $\{i\}\in\constraints$ for each $i\in\vars$), so that rather than deleting domain values we can forbid tuples of the unary constraints.

\begin{example}\label{ex:ac}
Let us consider (generalized) \emph{arc consistency\/} (AC). A CSP~$\CSP$ is (G)AC if for all $\scope\in\constraints$, $i\in\scope$ and $k\in D$ we have the equivalence\,\footnote{Note, for convenience we use a slightly unusual definition of arc consistency, allowing to restrict not only domains but also constraint relations. This definition was also considered in~\cite[\S6]{Bogdan-book-2019} or~\cite{Werner-PAMI-2010}.}
\begin{equation}
(\{i\},k)\in \CSP \quad\iff\quad (\exists l\in D^S\colon (\scope,l)\in\CSP, \; l_i=k) .
\label{eq:GAC}
\end{equation}

If, for some $\scope\in\constraints$, $i\in\scope$ and $k\in D$, the left-hand statement in~\eqref{eq:GAC} is true and the right-hand statement is false, the AC propagator infers $\SOL(\CSP)=\SOL(\CSP-\deactivating)$ where $\deactivating=\{(\{i\},k)\}$. To infer this, it suffices to know that the tuples $P=\ourset{(\scope,l) \mid l\in D^S, \; {l_i=k}}$ are all forbidden. An $\deactivating$\nobreakdash-deactivating direction~$d$ for~$\CSP$ can be chosen as in~\eqref{eq:deact_exists} where $\delta=\ourabs{\ourset{\scope'\in\constraints \mid \TPA{\scope'}\cap \deactivating\neq \emptyset}}=1$ and $\CSP$~is replaced by~$T-P$. Note that then we have $d\in M^\perp$.

If the left-hand statement in~\eqref{eq:GAC} is false and the right-hand statement is true, the AC propagator infers $\SOL(\CSP)=\SOL(\CSP-\deactivating)$ where $\deactivating=\ourset{(\scope,l) \mid l\in D^S, \; l_i=k}\cap\CSP$. To infer this, it suffices to know that the tuple $P=\{(\{i\},k)\}$ is forbidden. In this particular case, rather than using~\eqref{eq:deact_exists} (with~$A$ replaced by~$T-P$), it is better to choose~$d$ as
\begin{equation}\label{eq:AC2}
    d_t = \begin{cases}
   -1        & \text{if $t \in \ourset{(\scope,l) \mid l\in D^S, \; l_i=k}$}  \\
   1        & \text{if $t\in P$}\\
   0        & \text{otherwise}
  \end{cases}.
\end{equation}
Vector~\eqref{eq:AC2} satisfies $d\in M^\perp$, in contrast to vector~\eqref{eq:deact_exists} which satisfies only $d\in M^*$. Thus, the update $f^{k+1}=f^k+\alpha d$ is a mere reparametrization, which is desirable as explained in~\S\ref{se:properties_of_the_method}.

We note that reparametrizations considered in the previous paragraphs correspond to soft arc consistency operations \emph{extend\/} and \emph{project\/} in~\cite{Cooper-AI-2010}.
\end{example}

\begin{example}\label{ex:cc}
We now consider \emph{cycle consistency\/} as defined in~\cite{komodakis2008beyond}.\footnote{This is different from \emph{cyclic consistency\/} as defined in~\cite{cooper2004cyclic}. E.g., reparametrizations are sufficient to enforce cyclic consistency, whereas super-reparametrizations are needed for cycle consistency. Cycle consistency is not common in the constraint programming community. It can be shown that if the graph~$(V,E)$ is complete, then path inverse consistency~\cite{freuder1996neighborhood,Bessiere-CP-handbook} corresponds to cycle consistency w.r.t. all cycles of length~3.} 
As this local consistency was defined only for binary CSPs, we assume that $|\scope|\le2$ for each $\scope\in\constraints$ and denote $\edges=\ourset{\scope\in\constraints \mid |\scope|=2}$, so that $(\vars,\edges)$ is an undirected graph. Let~$\mathcal{L}$ be a (polynomially sized) set of cycles in the graph~$(\vars,\edges)$. A CSP~$\CSP$ is cycle consistent w.r.t.~$\mathcal{L}$ if for each tuple $(\{i\},k)\in \CSP$ (where $i\in V$ and $k\in D$) and each cycle $L\in\mathcal{L}$ that passes through node $i\in \vars$, there exists an assignment~$x$ with~$x_i=k$ that uses only allowed tuples in cycle~$L$. It can be shown that the cycle repair procedure in~\cite{komodakis2008beyond} constructs a deactivating direction whenever an inconsistent cycle is found. Moreover, the constructed direction in this case coincides with~\eqref{eq:deact_exists} where~$A$ is replaced by~$T-P$ for a suitable set~$P$ that contains a subset of the forbidden tuples within the cycle.
\end{example}

\begin{example}\label{ex:sac}
Recall that a CSP~$\CSP$ is \emph{singleton arc consistent (SAC)\/} if for every tuple $t=(\{i\},k)\in A$ (where $i\in V$ and $k\in D$), the CSP\footnote{This can be also stated as $\CSP\vert_{x_i=k}= \CSP-\ourset{(\{i\},k')\mid k'\in \domain-\{k\}}$. In other words, the solutions of the CSP $\CSP\vert_{x_i=k}$ are the solutions~$x$ to CSP~$\CSP$ satisfying~$x_i=k$. This notation is used, e.g., in~\cite{bessiere2011efficient}.} $\CSP\vert_{x_i=k} =\CSP-(\TPA{\{i\}}-\{(\{i\},k)\})$  has a non-empty arc-consistency closure.
Good (i.e., sparse) deactivating directions for SAC can be obtained as follows. For some $(\{i\},k)\in \CSP$, we enforce arc consistency of CSP~$\CSP|_{x_i=k}$, during which we store the causes for forbidding each tuple. If $\CSP|_{x_i=k}$ is found to have empty AC closure, we backtrack and identify only those tuples which were necessary to prove the empty AC closure.
These tuples form the set~$P$. The deactivating direction is then constructed as in~\eqref{eq:deact_exists} where~$\deactivating=\{(\{i\},k)\}$ and $\CSP$~is replaced by~$T-P$. Note that SAC does not have bounded support as many other local consistencies~\cite{bessiere2008theoretical} do, so the size of~$P$ can be significantly different for different CSP instances. We show a detailed example of constructing a deactivating direction using SAC in Appendix~\ref{ap:example}.
\end{example}

\subsubsection{Composing Deactivating Directions}\label{se:composing}

Consider now a propagator which, for a current CSP $\CSP\subseteq T$, returns a set $\deactivating\subseteq\CSP$ such that $\SOL(\CSP)=\SOL(\CSP-\deactivating)$ and an $\deactivating$\nobreakdash-deactivating direction for~$\CSP$. This propagator is applied iteratively, each time forbidding a different set of tuples, until the current CSP achieves the desired local consistency level~$\Phi$ or it becomes explicit that the CSP is unsatisfiable. This is outlined in Algorithm~\ref{al:propagation_with_storing_vectors}, which stores the generated sets $\deactivating_i$ of tuples being forbidden and the corresponding $\deactivating_i$\nobreakdash-deactivating directions~$d^i$. By line~5 of the algorithm, we have $\CSP_i = \CSP-\bigcup_{j=0}^{i-1} \deactivating_j$ for every $i \in \{0, \ldots, n+1\}$. Therefore, by Theorem~\ref{th:deact_implies}, we have $\SOL(A)=\SOL(A_1)=\SOL(A_2)=\cdots=\SOL(A_{n+1})$, which implies that if $A_{n+1}$~is unsatisfiable then so is~$A$.

\begin{algorithm}[t]
\caption{The procedure {\tt propagate} applies constraint propagation to CSP $\CSP\subseteq T$ and returns the sequence $(\deactivating_i)_{i=0}^n$ of tuple sets that were forbidden and the corresponding deactivating directions $(d^i)_{i=0}^n$. If all tuples in some scope~$\scope \in \constraints$ become forbidden during propagation, {\tt propagate} returns also $\scope$,  otherwise it returns $S=\emptyset$.}\label{al:propagation_with_storing_vectors}
\begin{algorithmic}[1]
\State \textbf{procedure} $(\scope,(\deactivating_i)_{i=0}^n,(d^i)_{i=0}^n)=\text{\tt propagate}(A)$ 
\State Initialize $n:=0$, $\CSP_0:=\CSP$.
\While{\normalfont $\CSP_n$ is not $\Phi$-consistent}
   \State Find a set~$\deactivating_n\subseteq \CSP_n$ and an $\deactivating_n$-deactivating direction $d^n$ for $\CSP_n$.
   \State $\CSP_{n+1}:=\CSP_n-\deactivating_n$
   \If{$\exists \scope\in\constraints\colon \CSP_{n+1}\cap\TPA{\scope} = \emptyset$}
    \State \Return{$(\scope,(\deactivating_i)_{i=0}^n$, $(d^i)_{i=0}^n)$}
   \EndIf
   \State $n:=n+1$
\EndWhile
\State \Return{$(\emptyset,(\deactivating_i)_{i=0}^{n-1}$, $(d^i)_{i=0}^{n-1}))$}
\end{algorithmic}
\end{algorithm}

In this section, we show how to compose the generated sequence of $\deactivating_i$-deactivating directions~$d^i$ for~$\CSP_i$ into a single $\big(\bigcup_{i=0}^n \deactivating_i\big)$\nobreakdash-deactivating direction for~$\CSP$. This can be done using the following composition rule:

\begin{theorem}\label{th:combining_deactivating_vectors}
Let $\CSP\subseteq T$ and $\deactivating,\deactivating'\subseteq \CSP$ where $\deactivating\cap \deactivating'=\emptyset$. Let $d$~be an $\deactivating$-deactivating direction for~$\CSP$. Let $d'$~be an $\deactivating'$-deactivating direction for~$\CSP-\deactivating$. Let
\begin{equation}\label{eq:combining_delta}
    \delta = \begin{cases}
    0 & \text{if $d_t' \leq -1$ for all $t \in \deactivating$} ,\\
    \max\ourset{(-1-d_t')/d_t \mid t \in \deactivating, \; d_t' > -1} & \text{otherwise} .
    \end{cases}
\end{equation}
Then $d'' = d'+\delta d$ is an $(\deactivating\cup \deactivating')$-deactivating direction for~$\CSP$.
\end{theorem}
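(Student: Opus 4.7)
The plan is to verify the three conditions that make $d''$ an $(R \cup R')$-deactivating direction for $A$: (i) $d'' \in M^*$, (ii) $d''_t < 0$ for all $t \in R \cup R'$, and (iii) $d''_t = 0$ for all $t \in A - (R \cup R')$. The first condition is essentially free because $M^*$ is a convex cone (being the dual cone of $M$), and we will see that $\delta \ge 0$, so $d'' = d' + \delta d$ is a conic combination of elements of $M^*$.

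For condition (iii), I would take $t \in A - (R \cup R')$. Since $t \in A - R$ and $d'$ is $R'$-deactivating for $A - R$ with $t \notin R'$, condition (b) of Definition~\ref{de:t-deactivating} applied to~$d'$ gives $d'_t = 0$. Since $t \in A - R$, applying condition (b) to~$d$ for~$A$ yields $d_t = 0$. Thus $d''_t = 0$, as needed.

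For condition (ii), I split into the two cases $t \in R'$ and $t \in R$ (which are disjoint by hypothesis). If $t \in R'$, then $t \in A - R$, so $d'_t < 0$ from the deactivating property of~$d'$, while $d_t = 0$ because $t \notin R$ and $t \in A$. Hence $d''_t = d'_t < 0$. The interesting case is $t \in R$: here the deactivating property of $d'$ on $A-R$ gives no information about~$d'_t$ (since $t \notin A-R$), so $d'_t$ could be arbitrary. This is exactly what the definition of~$\delta$ is engineered to handle. Note that $\delta \ge 0$ in either branch of~\eqref{eq:combining_delta}: in the nontrivial branch, the numerator $-1 - d'_t$ is negative and $d_t < 0$, so each ratio is positive. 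If $d'_t \le -1$, then $d''_t = d'_t + \delta d_t \le -1 + 0 = -1 < 0$ because $\delta d_t \le 0$. If $d'_t > -1$, then $\delta \ge (-1 - d'_t)/d_t$; multiplying by the negative quantity $d_t$ flips the inequality to $\delta d_t \le -1 - d'_t$, giving $d''_t \le -1 < 0$.

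The main obstacle, and the only subtle point, is precisely this handling of $t \in R$ where $d'_t$ is not controlled by the hypotheses on~$d'$: one must verify that the step size~$\delta$ chosen in~\eqref{eq:combining_delta} is both nonnegative (so that condition~(i) follows from conic combination) and large enough that adding $\delta d_t$ overpowers any positive contribution of $d'_t$. After verifying this, conditions (i)–(iii) are in hand and the theorem follows.
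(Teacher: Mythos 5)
Your proposal is correct and follows essentially the same route as the paper's proof: verify conditions (a) and (b) of Definition~\ref{de:t-deactivating} by the same case split ($t\in R'$ with $d_t=0$, $t\in R$ with the two subcases $d'_t\le-1$ and $d'_t>-1$ handled by the choice of~$\delta$, and $t\in A-(R\cup R')$ where both components vanish), and conclude $d''\in M^*$ from $\delta\ge0$ and the cone property. No gaps.
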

\begin{proof}
First, if $d_t' \leq -1$ for all $t \in \deactivating$, then $d''=d'$ satisfies the required condition immediately. Otherwise, $\delta > 0$ since $d_t < 0$ for all $t \in \deactivating$ by definition and $-1-d'_t < 0$ due to $d'_t>-1$ in the definition of~$\delta$. We will show that~$d''$ satisfies the conditions in Definition~\ref{de:t-deactivating}.

For $t \in \deactivating$ with $d'_t \leq -1$, $d''_t = d'_t + \delta d_t < d'_t \leq -1$ because $\delta d_t < 0$. If $t\in \deactivating$ and $d'_t > -1$, then $\delta \geq (-1-d'_t)/d_t$, so $d''_t = d_t'+\delta d_t \leq -1$. Summarizing, we have $d''_t<0$ for all $t\in \deactivating$.

For $t\in \deactivating'$, $d'_t<0$ and $d_t = 0$ holds by definition due to $\deactivating'\subseteq A-\deactivating$, thus $d''_t = d_t'+\delta d_t = d_t' < 0$ which together with the previous paragraph yields condition~(a).

Due to $\CSP-\deactivating \supseteq (\CSP-\deactivating)-\deactivating'=\CSP-(\deactivating\cup \deactivating')$, for any $t\in \CSP-(\deactivating\cup \deactivating')$ we have $d_t = 0$ and $d_t' = 0$, which implies $d''_t = d'+\delta d = 0$, thus verifying condition (b).

Finally, we have $d''\in M^*$ because $d,d'\in M^*$ and $\delta\ge0$.
\end{proof}

Theorem~\ref{th:combining_deactivating_vectors} allows us to combine $\deactivating_i$-deactivating direction~$d^i$ for~$\CSP_i=\CSP_{i-1}- \deactivating_{i-1}$ with $\deactivating_{i-1}$-deactivating direction~$d^{i-1}$ for~$\CSP_{i-1}$ into a single $(\deactivating_{i-1}\cup \deactivating_i)$\nobreakdash-deactivating direction for~$\CSP_{i-1}$. Iteratively, we can thus gradually build a $\bigl(\bigcup_{i=0}^n \deactivating_i\bigr)$\nobreakdash-deactivating direction for~$\CSP$, which certifies unsatisfiability of~$\CSP$ whenever Algorithm~\ref{al:propagation_with_storing_vectors} detects on line~6 that~$A_{n+1}$ (and thus also~$A$) is unsatisfiable.

However, it is not always necessary to construct a full $\bigl(\bigcup_{i=0}^n \deactivating_i\bigr)$-deactivating direction because not every iteration of constraint propagation may have been necessary to prove unsatisfiability of~$\CSP$. Instead, we can use the scope~$\scope\in\constraints$ satisfying~$\CSP_{n+1}\cap\TPA{\scope}=\emptyset$  (where $\CSP_{n+1}=\CSP-\bigcup_{i=0}^n \deactivating_i$, as mentioned above) returned by Algorithm~\ref{al:propagation_with_storing_vectors} on line~7 and construct an $\Mset$\nobreakdash-deactivating direction~$d^*$ for a (usually smaller) set $\Mset \subseteq \bigcup_{i=0}^n \deactivating_i$ such that $(\CSP-\Mset)\cap\TPA{\scope}=\emptyset$. Such a direction~$d^*$ still certifies unsatisfiability of~$\CSP$ and can be sparser and/or may have lower objective values $\VAL {d^*}x$ than a $\bigl(\bigcup_{i=0}^n \deactivating_i\bigr)$\nobreakdash-deactivating direction, which is desirable as explained in~\S\ref{se:properties_of_the_method}.

This is outlined in Algorithm~\ref{al:composition_of_some_vectors}, which composes only a subsequence of directions~$d^i$ based on a given set of indices~$\chosenindices \subseteq \{0,\dots,n\}$ and constructs an $\Mset$-deactivating direction with $\Mset \supseteq \bigcup_{i \in \chosenindices}\deactivating_i$. Although Algorithm~\ref{al:composition_of_some_vectors} is applicable for any set~$\chosenindices$, in our case $\chosenindices$ is obtained by taking a scope $\scope\in\constraints$ such that $\CSP_{n+1}\cap \TPA{\scope}=\emptyset$ and then setting
\begin{equation}
\chosenindices = \ourset{i\in\{0,\dots,n\} \mid \deactivating_i\cap \TPA{\scope}\neq \emptyset}
\label{eq:chosenindices}
\end{equation}
so that $(\CSP-\Mset)\cap\TPA{\scope} = \emptyset$ due to the following fact:

\begin{proposition}
Let $\scope\in\constraints$ be such that $(\CSP-\bigcup_{i=0}^n\deactivating_i)\cap \TPA{\scope}=\emptyset$.
Let $\chosenindices$ be given by~\eqref{eq:chosenindices}.
Then $(\CSP-\bigcup_{i\in I}\deactivating_i)\cap \TPA{\scope}=\emptyset$.
\end{proposition}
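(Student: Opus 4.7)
The plan is to reduce the statement to a straightforward set-theoretic identity. The key observation is that for every index $i\notin \chosenindices$, the set $\deactivating_i$ is disjoint from $\TPA{\scope}$ by the very definition~\eqref{eq:chosenindices}. Consequently, when intersecting the big union $\bigcup_{i=0}^n\deactivating_i$ with $\TPA{\scope}$, only the indices in~$\chosenindices$ actually contribute.

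Concretely, first I would invoke distributivity of intersection over union to write
\begin{equation*}
\Bigl(\bigcup_{i=0}^n \deactivating_i\Bigr)\cap \TPA{\scope}
= \bigcup_{i=0}^n (\deactivating_i \cap \TPA{\scope})
= \bigcup_{i\in \chosenindices} (\deactivating_i \cap \TPA{\scope})
= \Bigl(\bigcup_{i\in \chosenindices}\deactivating_i\Bigr)\cap \TPA{\scope},
\end{equation*}
where the middle equality uses that $\deactivating_i\cap\TPA{\scope}=\emptyset$ whenever $i\notin \chosenindices$.

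Second, I would apply the elementary identity $(X-Y)\cap Z = (X\cap Z)-(Y\cap Z)$ twice, with $Z=\TPA{\scope}$, to obtain
\begin{equation*}
\Bigl(\CSP-\bigcup_{i=0}^n\deactivating_i\Bigr)\cap \TPA{\scope}
= \Bigl(\CSP-\bigcup_{i\in \chosenindices}\deactivating_i\Bigr)\cap \TPA{\scope}.
\end{equation*}
Since the left-hand side is empty by the hypothesis, the right-hand side is empty as well, which is precisely the claim.

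There is no real obstacle here; the proof is purely set-theoretic bookkeeping. The only point worth being careful about is making the two rewritings explicit so that the reader sees why dropping indices outside~$\chosenindices$ is lossless when we only care about tuples lying in $\TPA{\scope}$.
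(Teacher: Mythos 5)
Your proof is correct and follows essentially the same route as the paper's: both arguments are pure set-theoretic rewritings whose only substantive ingredient is that $\deactivating_i\cap\TPA{\scope}=\emptyset$ for every $i\notin\chosenindices$, by the definition of~$\chosenindices$ in~\eqref{eq:chosenindices}. The paper packages this via the identity $(\CSP-\deactivating)\cap T'=(T'-\deactivating)\cap\CSP$ while you use distributivity together with $(X-Y)\cap Z=(X\cap Z)-(Y\cap Z)$, but this is a cosmetic difference; both in fact establish the stronger statement that the two intersections are equal, not merely that emptiness of one implies emptiness of the other.
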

\begin{proof}
For any sets $\CSP,\deactivating,T'\subseteq T$ we have $(\CSP-\deactivating)\cap T'=(T'-\deactivating)\cap \CSP$. In particular, $(\CSP-\bigcup_{i=0}^n\deactivating_i)\cap \TPA{\scope} = (\TPA{\scope} - \bigcup_{i=0}^n\deactivating_i) \cap \CSP$. But
$\TPA{\scope} - \bigcup_{i=0}^n\deactivating_i
= \TPA{\scope} - \bigcup_{i\in I}\deactivating_i$
because for each $i\notin I$ we have $\deactivating_i\cap\TPA{\scope}=\emptyset$ which is equivalent to $\TPA{\scope}-\deactivating_i=\TPA{\scope}$.
\end{proof}

\begin{algorithm}[t]
\caption{The procedure {\tt compose} takes the sequences~$(\deactivating_i)_{i=0}^n$ and~$(d^i)_{i=0}^n$ (generated by the procedure {\tt propagate}) and a non-empty index set~$\chosenindices \subseteq \{0,\dots,n\}$ and composes them to an $\Mset$-deactivating direction~$d^*$ for~$\CSP$.} \label{al:composition_of_some_vectors}
\begin{algorithmic}[1]
\State \textbf{procedure} $(\Mset,d^*)=\text{\tt compose}((\deactivating_i)_{i=0}^n,(d^i)_{i=0}^n,\chosenindices)$
\State Initialize $i := \max \chosenindices$, $d^* := d^i$, $\Mset := \deactivating_i$.
\While{$i>0$}
    \State $i:=i-1$
    \If{\normalfont $i\in \chosenindices$ or $\exists t \in \deactivating_i\colon d^*_t\neq 0$}
    \State $d^*:=d^*+\delta d^i$ (where $\delta$ is~\eqref{eq:combining_delta} with $d',d,\deactivating$ replaced by $d^*,d^i,\deactivating_i$)
    \State $\Mset:=\Mset\cup \deactivating_i$
    \EndIf
\EndWhile
\State \Return{$(\Mset,d^*)$}
\end{algorithmic}
\end{algorithm}

Correctness of Algorithm~\ref{al:composition_of_some_vectors} is given by the following theorem:

\begin{proposition}\label{th:composition_of_some_vectors}
Algorithm~\ref{al:composition_of_some_vectors} returns an $\Mset$-deactivating direction~$d^*$ for~$\CSP$ where $\bigcup_{i\in \chosenindices}\deactivating_i \subseteq \Mset  \subseteq \bigcup_{i=0}^n \deactivating_i$.
\end{proposition}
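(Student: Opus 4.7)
The plan is to prove the claim by backward induction on the loop variable~$i$, from $i=\max\chosenindices$ down to $i=0$, maintaining the invariant that after processing index~$i$ (where for $i=\max\chosenindices$ this means the initialization), the current vector~$d^*$ is an $\Mset$-deactivating direction for the intermediate CSP $\CSP_i=\CSP-\bigcup_{j<i}\deactivating_j$, and $\bigcup_{j\in\chosenindices,\,j\ge i}\deactivating_j\subseteq\Mset\subseteq\bigcup_{j\ge i}\deactivating_j$. Since $\CSP_0=\CSP$, instantiating the invariant at $i=0$ yields the proposition.

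A preliminary observation I would record is that the sets $\deactivating_0,\ldots,\deactivating_n$ are pairwise disjoint: for $k<j$ we have $\deactivating_k\cap\CSP_{k+1}=\emptyset$ by construction of $\CSP_{k+1}=\CSP_k-\deactivating_k$, and $\deactivating_j\subseteq\CSP_j\subseteq\CSP_{k+1}$. Hence, when the loop visits index~$i$, the previous-step quantities satisfy $\Mset_{\text{old}}\subseteq\bigcup_{j>i}\deactivating_j\subseteq\CSP_i$ and $\deactivating_i\cap\Mset_{\text{old}}=\emptyset$. The base case $i=\max\chosenindices$ is immediate from the initialization, since {\tt propagate} guarantees that $d^{\max\chosenindices}$ is an $\deactivating_{\max\chosenindices}$-deactivating direction for $\CSP_{\max\chosenindices}$.

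For the inductive step, suppose the invariant holds after processing index $i+1$ and consider index~$i$. If the if-condition is satisfied, I would invoke Theorem~\ref{th:combining_deactivating_vectors} with the substitution $\CSP:=\CSP_i$, $\deactivating:=\deactivating_i$, $\deactivating':=\Mset_{\text{old}}$, $d:=d^i$, $d':=d^*_{\text{old}}$. The hypotheses then follow: $d^i$ is $\deactivating_i$-deactivating for~$\CSP_i$ by construction in {\tt propagate}, $d^*_{\text{old}}$ is $\Mset_{\text{old}}$-deactivating for $\CSP_i-\deactivating_i=\CSP_{i+1}$ by the inductive hypothesis, and disjointness is the preliminary observation. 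The theorem's conclusion then yields that the updated $d^*=d^*_{\text{old}}+\delta d^i$ is an $(\Mset_{\text{old}}\cup\deactivating_i)$-deactivating direction for~$\CSP_i$, exactly matching the new~$\Mset$.

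The main obstacle, and the only subtle case, is when the if-condition fails and both $d^*$ and~$\Mset$ are left unchanged, yet I must argue that $d^*$ remains $\Mset$-deactivating for the \emph{larger} ambient CSP $\CSP_i\supseteq\CSP_{i+1}$. Condition~(a) of Definition~\ref{de:t-deactivating} and the membership $d^*\in M^*$ transfer unchanged. For condition~(b) I would decompose $\CSP_i-\Mset=(\CSP_{i+1}-\Mset)\cup(\deactivating_i-\Mset)$: the first piece is handled by the inductive hypothesis, and the second vanishes precisely because the else-branch is triggered only when $d^*_t=0$ for all $t\in\deactivating_i$. Finally, the containment statement is preserved because $i\in\chosenindices$ forces entry into the if-branch (so $\deactivating_i$ is appended to $\Mset$), while the else-branch never removes indices from~$\Mset$; the upper containment $\Mset\subseteq\bigcup_{j\ge i}\deactivating_j$ is transparent from the algorithm.
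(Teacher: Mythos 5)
Your proof is correct and follows essentially the same route as the paper's: a backward induction over the loop maintaining that $d^*$ is an $\Mset$-deactivating direction for the intermediate CSP $\CSP_i$, applying Theorem~\ref{th:combining_deactivating_vectors} in the if-branch and using $d^*_t=0$ for all $t\in\deactivating_i$ in the else-branch. Your explicit verification of the disjointness hypothesis $\deactivating_i\cap\Mset=\emptyset$ (via pairwise disjointness of the $\deactivating_j$) is a detail the paper leaves implicit, but the argument is otherwise the same.
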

\begin{proof}
The fact that $\Mset \supseteq \bigcup_{i\in \chosenindices}\deactivating_i$ is obvious due to $\deactivating_{\max \chosenindices}\subseteq \Mset$ by initialization on line~2 and $\deactivating_i\subseteq \Mset$ for any $i\in \chosenindices$ such that $i<\max \chosenindices$ because in such case the update on line~7 is performed. Similarly, $\Mset\subseteq \bigcup_{i=0}^n \deactivating_i$ holds by initialization of~$\Mset$ on line~2 and updates on line~7.

It remains to show that $d^*$ is $\Mset$-deactivating, which we will do by induction. We claim that vector~$d^*$ is always $\Mset$-deactivating direction for~$\CSP_i$ on line~3 and $\Mset$-deactivating direction for~$\CSP_{i+1}$ on line~5.

Initially, we have $d^*=d^i$, so $d^*$ is $\deactivating_i$-deactivating (i.e., $\Mset$-deactivating since $\Mset=\deactivating_i$ before the loop is entered) for~$\CSP_i$. Also, when vector~$d^*$ is first queried on line~5, $i$~decreased by~1 due to the update on line~4, so $d^*$~is $\Mset$-deactivating for~$\CSP_{i+1}$. The required property thus holds when the condition on line~5 is first queried with $i=\max \chosenindices-1$.

We proceed with the inductive step. If the condition on line~5 is not satisfied, then necessarily $d^*_t=0$ for all $t\in \deactivating_i$. So, if $d^*$ is $\Mset$-deactivating for~$\CSP_{i+1}$, then it is also $\Mset$-deactivating for~$\CSP_i=\CSP_{i+1}\cup \deactivating_i$, as seen from Definition~\ref{de:t-deactivating}.

If the condition on line~5 is satisfied, $d^*$ is $\Mset$-deactivating for~$\CSP_{i+1}$ before the update on lines~6-7. Since $\CSP_{i+1}=\CSP_i-\deactivating_i$ and $d^i$ is $\deactivating_i$-deactivating for~$\CSP_i$, Theorem~\ref{th:combining_deactivating_vectors} can be applied to~$d^i$ and~$d^*$ to obtain an $(\Mset\cup \deactivating_i)$-deactivating direction for~$\CSP_i$. After updating~$\Mset$ on line~7, it becomes $\Mset$-deactivating for~$\CSP_i$.

When eventually $i=0$, $d^*$~is $\Mset$-deactivating for~$\CSP_0=\CSP$ by line~2 in Algorithm~\ref{al:propagation_with_storing_vectors}.
\end{proof}

\begin{remark}
This is similar to what the VAC~\cite{Cooper-AI-2010} or Augmenting DAG algorithm~\cite{Koval76,Werner-PAMI07} do for arc consistency. To attempt to disprove satisfiability of CSP~$A^*(f)$, these algorithms enforce AC of~$A^*(f)$, during which the causes for forbidding tuples are stored. If the empty AC closure of~$A^*(f)$ is detected (which corresponds to $\TPA{\scope}\cap \CSP_{n+1}=\emptyset$ for some $\scope\in\constraints$), these algorithms do not iterate through all previously forbidden tuples but only trace back the causes for forbidding the elements of the wiped-out domain (here, the elements of~$\TPA{\scope}$).
\end{remark}

\subsection{Line Search}\label{se:line_search}

In~\S\ref{se:improving_directions} we showed how to construct an $\deactivating$-deactivating direction~$d$ for a CSP~$\CSP$, which certifies unsatisfiability of~$\CSP$ whenever $(\CSP-\deactivating)\cap \TPA{\scope}=\emptyset$ for some $\scope\in\constraints$. Given a WCSP $f\in\R^T$ with $A^*(f)=A$, to obtain $f'\in f+M^*$ with $\UB(f')<\UB(f)$ (as in Theorem~\ref{th:CSPcert}), we need to find a step size $\alpha>0$ so that $f'=f+\alpha d$, as discussed in~\S\ref{se:constraint_propagation_scheme}. That means, we need to find $\alpha>0$ such that $\UB(f+\alpha d)<\UB(f)$. This task is known in numerical optimization as \emph{line search}.

Finding the best step size (i.e., exact line search) would require finding a global minimum of the univariate convex piecewise-affine function $\alpha\mapsto \UB(f+\alpha d)$. As this would be too expensive for large WCSP instances, we find only a suboptimal step size (approximate line search) in the following theorem.\footnote{In detail, the step size $\min\{\beta,\gamma\}$ computed in Theorem~\ref{th:improve_bound} corresponds to the first break (i.e., non-differentiable) point of the univariate function with a lower objective. This is analogous to the \emph{first-hit strategy\/} in~\cite[\S3.1.4]{dlask2018minimizing}.}

\begin{theorem}\label{th:improve_bound}
Let $f \in \R^T$. Let $d$ be an $\deactivating$-deactivating direction for~$A^*(f)$. Denote\,\footnote{$\beta$ is always defined: by Definition~\ref{de:t-deactivating} we have $\VAL dx\ge 0$ for all~$x$, hence $\exists t\colon d_t<0\Rightarrow\exists t'\colon d_{t'}>0$. $\gamma$~is defined and needed only in~(c), where we assume that $(A^*(f)-\deactivating)\cap \TPA{\scope}=\emptyset$ for some $\scope\in\constraints$. If the set in the definition of~$\gamma$ is empty, then we define $\gamma=+\infty$ and thus $\min\{\beta,\gamma\}=\beta$.}
\begin{align*}
\beta &= \min\ourset[\bigg]{\frac{\max\nolimits_{t\in \TPA{\scope'}}f_t-f_{t'}}{d_{t'}} \mid \scope'\in\constraints, \; t'\in\TPA{\scope'}, \; d_{t'} > 0 }, \\
\gamma &= \min\bigg\{\,\frac{f_t-f_{t'}}{d_{t'}-d_t} \;\bigg\vert\; \scope\in\constraints, \; (A^*(f)- \deactivating)\cap \TPA{\scope}=\emptyset, \bigg.\\
\bigg. & \hphantom{= \min\bigg\{\,\frac{f_t-f_{t'}}{d_{t'}-d_t} \;\bigg\vert\; } \; t \in \TPA{\scope}\cap \deactivating, \; t'\in \TPA{\scope}-\deactivating, \; d_{t'} > d_t\,\bigg\} .
\end{align*}
Then $\beta,\gamma>0$ and for every $\scope\in\constraints$ and $\alpha\in\R$, WCSP $f'= f + \alpha d$ satisfies:
\begin{enumerate}
\item[(a)] If $(A^*(f)-\deactivating)\cap \TPA{\scope}\neq\emptyset$ and $0\leq \alpha\leq\beta$, then $\max_{t \in \TPA{\scope}}f'_t = \max_{t \in \TPA{\scope}}f_t$.
\item[(b)] If $(A^*(f)-\deactivating)\cap \TPA{\scope}\neq\emptyset$ and $0<\alpha<\beta$, then $A^*(f')\cap \TPA{\scope} = (A^*(f)-\deactivating)\cap \TPA{\scope}$.
\item[(c)] If $(A^*(f)-\deactivating)\cap \TPA{\scope}=\emptyset$ and $0<\alpha\leq\min\{\beta,\gamma\}$, then $\max_{t \in \TPA{\scope}}f'_t < \max_{t \in \TPA{\scope}}f_t$.
\end{enumerate}
\end{theorem}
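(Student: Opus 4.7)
The plan is a direct verification of (a), (b), (c) by tuple-wise comparison of $f$ and $f'=f+\alpha d$, preceded by the auxiliary positivity claims $\beta,\gamma>0$.

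First I would establish $\beta>0$. Any $t'$ entering the minimum in the definition of $\beta$ satisfies $d_{t'}>0$, so by Definition~\ref{de:t-deactivating} we have $t'\notin A^*(f)$ (otherwise $t'\in R$ forces $d_{t'}<0$ and $t'\in A^*(f)-R$ forces $d_{t'}=0$). Hence $f_{t'}<\max_{t\in\TPA{\scope'}}f_t$ and the ratio is strictly positive. For $\gamma>0$, in the situation of~(c) every active tuple of $\TPA{\scope}$ lies in $R$; thus for any candidate pair $(t,t')$ in the $\gamma$-minimum, $t$ is active while $t'$ is not, giving $f_t-f_{t'}>0$, and the denominator is positive by assumption.

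For (a), fix $\scope$ with some $t^*\in (A^*(f)-R)\cap\TPA{\scope}$. Since $t^*\in A^*(f)-R$, $d_{t^*}=0$, so $f'_{t^*}=f_{t^*}=\max_{t\in\TPA{\scope}}f_t$, giving the lower bound $\max_{t\in\TPA{\scope}}f'_t\ge \max_{t\in\TPA{\scope}}f_t$. For the upper bound, take any $t'\in\TPA{\scope}$: if $d_{t'}\le 0$ then $f'_{t'}\le f_{t'}\le \max_t f_t$; if $d_{t'}>0$ then by the definition of $\beta$ we have $\alpha d_{t'}\le \beta d_{t'}\le \max_t f_t-f_{t'}$, so $f'_{t'}\le \max_t f_t$. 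This yields~(a). For~(b), under the strict inequality $\alpha<\beta$, repeat the above with strict inequalities to show: (i) every tuple in $(A^*(f)-R)\cap\TPA{\scope}$ stays active in $f'$ (as just shown); (ii) every tuple in $R\cap\TPA{\scope}$ drops strictly (active in $f$ but $d_t<0$, so $f'_t<f_t=\max_s f_s=\max_s f'_s$); (iii) every non-active tuple $t'$ satisfies $f'_{t'}<\max_s f_s$, using either $d_{t'}\le 0$ or the strict form of the $\beta$-bound.

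The main difficulty lies in~(c), where no untouched active tuple is available to pin the maximum. I would argue tuple-by-tuple for $t'\in\TPA{\scope}$ that $f'_{t'}<\max_{t\in\TPA{\scope}}f_t$. If $t'\in R$, then $t'$ is active in $f$ and $d_{t'}<0$, so $f'_{t'}<f_{t'}=\max_t f_t$. If $t'\in\TPA{\scope}-R$, then $t'$ is not active in $f$, so $f_{t'}<\max_t f_t$; fix any active $t\in\TPA{\scope}\cap R$. If $d_{t'}\le d_t$, then $d_{t'}\le d_t<0$ gives $f'_{t'}\le f_{t'}<\max_t f_t$ for $\alpha\ge 0$. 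If $d_{t'}>d_t$, then the $\gamma$-bound gives $\alpha(d_{t'}-d_t)\le f_t-f_{t'}$, i.e.\ $f'_{t'}\le f'_t=f_t+\alpha d_t<f_t=\max_s f_s$ because $\alpha>0$ and $d_t<0$. Taking the max over $t'$ yields~(c). The one subtlety to watch for is the degenerate case where the set defining $\gamma$ is empty, handled by the convention $\gamma=+\infty$ in the footnote; then no pair $(t,t')$ with $d_{t'}>d_t$ exists in $\TPA{\scope}$, so the second subcase is vacuous and only the $d_{t'}\le d_t$ branch applies.
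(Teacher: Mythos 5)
Your proof is correct and follows essentially the same route as the paper: the same positivity arguments for $\beta$ and $\gamma$, and the same tuple-wise case analysis for (a)--(c). The only (immaterial) difference is in (c), where the paper bounds $\max_{t'\in \TPA{\scope}-\deactivating}f'_{t'}$ by comparing against the $f'$-maximizer $t^*$ over $\TPA{\scope}\cap\deactivating$, whereas you compare each $t'$ directly against an arbitrary $t\in\TPA{\scope}\cap\deactivating$ and conclude $f'_{t'}<\max_{s}f_s$ tuple by tuple.
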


\begin{proof}
We have $\beta>0$ because $d_{t'} > 0$ implies $t'$~is an inactive tuple, so $\max_{t \in \TPA{\scope}}f_t>f_{t'}$. We have $\gamma>0$ because in $f_t-f_{t'}$ tuple~$t$ is always active and $t'$~is inactive, hence $f_t > f_{t'}$.

To prove (a), let $t^*\in (A^*(f)-\deactivating)\cap \TPA{\scope}$. Hence, by Definition~\ref{de:t-deactivating}, $d_{t^*} = 0$ and the value $\max_{t \in \TPA{\scope}}f'_t$ does not decrease for any $\alpha$ since $f'_{t^*}=f_{t^*}+\alpha d_{t^*}=f_{t^*}$. To show the maximum does not increase, consider a tuple $t' \in \TPA{\scope}$ such that $d_{t'}>0$ (due to $\alpha \geq 0$, tuples with $d_{t'}\leq0$ cannot increase the maximum). It follows that $\alpha\leq \beta\le \tfrac{\max_{t \in \TPA{\scope}}f_t-f_{t'}}{d_{t'}}$, so $f'_{t'} = f_{t'}+d_{t'}\alpha \leq \max_{t \in \TPA{\scope}} f_t$.

To prove~(b), let $(A^*(f)-\deactivating)\cap \TPA{\scope}\neq\emptyset$. As in~(a), we have $\max_{t\in \TPA{\scope}}f_t=\max_{t\in \TPA{\scope}}f_t'$. If $t \in (A^*(f)-\deactivating)\cap \TPA{\scope}$, then $d_t = 0$ and such tuples remain active by $f_t'=f_t$. Tuples $t \in \deactivating\cap \TPA{\scope}$ become inactive since $f'_t = f_t+d_t\alpha<f_t = \max_{t' \in \TPA{\scope}}f_{t'}$ by $d_t < 0$ and $\alpha > 0$. Tuples $t\notin A^*(f)$ either satisfy $d_t\leq 0$ and cannot become active or satisfy $d_t> 0$ and by $\alpha < \beta\le \tfrac{\max_{t'\in \TPA{\scope}}f_{t'}-f_t}{d_t}$, $f'_t = f_t+d_t\alpha < \max_{t' \in \TPA{\scope}} f_{t'}$, so $t\notin A^*(f')$.

To prove (c), let $(A^*(f)-\deactivating)\cap \TPA{\scope}=\emptyset$. For all $t \in \TPA{\scope}\cap \deactivating$, we have $f'_t = f_t+\alpha d_t<f_t$  by~$d_t < 0$ and $\alpha > 0$, i.e., $\max_{t \in \TPA{\scope}\cap \deactivating} f'_t < \max_{t \in \TPA{\scope}\cap \deactivating} f_t$. We proceed to show that $f_t'\leq\max_{t' \in \TPA{\scope}\cap \deactivating} f'_{t'}$ for every $t'\in \TPA{\scope}-\deactivating$. Let $t^*\in \TPA{\scope}\cap \deactivating$ satisfy $f_{t^*}'=\max_{t \in \TPA{\scope}\cap \deactivating} f'_t$. If $d_{t'}>d_{t^*}$, $\alpha \leq \gamma \leq \tfrac{f_{t^*}-f_{t'}}{d_{t'}-d_{t^*}}$ implies $f'_{t^*} = f_{t^*} + \alpha d_{t^*} \geq f_{t'}+\alpha d_{t'} = f'_{t'}$. If $d_{t'} \leq d_{t^*}$, then also $\alpha d_{t'}\leq \alpha d_{t^*}$ and $f'_{t'} = f_{t'} + \alpha d_{t'} \leq f_{t^*}+\alpha d_{t^*} = f'_{t^*}$ holds for any $\alpha \geq 0$ since $f_{t'}< f_{t^*}$. As a result, $\max_{t' \in \TPA{\scope}-\deactivating} f'_{t'} \leq \max_{t \in \TPA{\scope}\cap \deactivating}f_t' < \max_{t \in \TPA{\scope}\cap \deactivating} f_t = \max_{t\in \TPA{\scope}} f_t$.
\end{proof}

If $d$~is an $\deactivating$-deactivating direction for CSP~$A^*(f)$ and for all $\scope\in\constraints$ we have $(A^*(f)-\deactivating)\cap \TPA{\scope}\neq\emptyset$ then, by Theorem~\ref{th:improve_bound}(a,b), there is $\alpha>0$ such that $f'=f+\alpha d$ satisfies $\UB(f')=\UB(f)$ and $A^*(f')=A^*(f)-\deactivating$. This justifies why such direction~$d$ is called $\deactivating$-deactivating: a suitable update of~$f$ along this direction makes tuples~$\deactivating$ inactive for~$f$.

\begin{remark}
This might suggest that to improve the current bound~$\UB(f)$, we need not use Algorithm~\ref{al:composition_of_some_vectors} to construct an $\Mset$-deactivating direction~$d^*$ with $(A^*(f)-\Mset)\cap \TPA{\scope}=\emptyset$ for some $\scope \in \constraints$, but instead, perform steps using the intermediate $\deactivating_i$-deactivating directions~$d^i$ to create a sequence $f^{i+1}=f^i+\alpha_i d^i$ satisfying~$\UB(f^0)=\UB(f^1)=\cdots=\UB(f^n)>\UB(f^{n+1})$. Unfortunately, it is hard to make this work reliably as there are many choices for the intermediate step sizes $0<\alpha_i<\beta_i$. We empirically found Algorithm~\ref{al:composed_algorithm} to be preferable. 
\end{remark}

If $d$~is an $\deactivating$-deactivating direction for~$A^*(f)$ and for some $\scope\in\constraints$ we have $(A^*(f)-\deactivating)\cap \TPA{\scope}=\emptyset$, then, by Theorem~\ref{th:improve_bound}(a,c), there is $\alpha>0$ such that $f'=f+\alpha d$ satisfies $\UB(f')<\UB(f)$. The following corollary of Theorem~\ref{th:improve_bound} finally justifies why the certificate~$d$ of unsatisfiability of CSP~$\CSP^*(f)$ is an improving direction for~\eqref{eq:LP}:

\begin{corollary}
CSP $\CSP\subseteq T$ is unsatisfiable if and only if there is $d\in M^*$ such that for every $f\in\R^T$ with $\CSP=A^*(f)$ there exists $\alpha>0$ such that $\UB(f+\alpha d)<\UB(f)$.
\end{corollary}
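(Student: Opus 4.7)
The plan is to prove the two directions separately. The ``if'' direction reduces to Theorem~\ref{th:CSPcert} via a contrapositive argument. The ``only if'' direction is more substantive, as it requires constructing a \emph{single} direction $d\in M^*$ that works uniformly for every WCSP $f$ with $A^*(f)=A$, rather than the $f$-dependent improving direction that Theorem~\ref{th:CSPcert} readily provides.

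For the ``if'' direction, I would argue by contraposition. Assuming $A$ is satisfiable, every scope $S\in C$ must contain at least one tuple of $A$ (otherwise no assignment can use one tuple from each $T_S$), so a WCSP $f$ with $A^*(f)=A$ exists, e.g., $f_t=0$ for $t\in A$ and $f_t=-1$ for $t\in T-A$. Given any $d\in M^*$, the cone property gives $\alpha d\in M^*$ for every $\alpha\geq 0$, hence $f+\alpha d\in f+M^*$. Theorem~\ref{th:CSPcert} applied to $g=f$ then forces $\UB(f)\leq \UB(f+\alpha d)$ for every $\alpha\geq 0$, contradicting the assumed existence of $\alpha>0$ with $\UB(f+\alpha d)<\UB(f)$.

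For the ``only if'' direction, suppose $A$ is unsatisfiable. If some scope $S$ has $A\cap T_S=\emptyset$, then no $f$ realizes $A^*(f)=A$, so the inner universal statement is vacuous and any $d\in M^*$ suffices. Otherwise, I would invoke Theorem~\ref{th:deact_exists} with the choice $R=A$: unsatisfiability of $A$ yields $\SOL(A)=\emptyset=\SOL(\emptyset)=\SOL(A-A)$, and formula~\eqref{eq:deact_exists} produces an explicit $A$-deactivating direction $d\in M^*$ that depends only on $A$. For any $f$ with $A^*(f)=A$ we then have $(A^*(f)-A)\cap T_S=\emptyset$ for every $S\in C$, so Theorem~\ref{th:improve_bound}(c) applies at every scope with the common step $\alpha=\min\{\beta,\gamma\}>0$ (where $\beta,\gamma$ depend on $f$), giving $\max_{t\in T_S}(f+\alpha d)_t<\max_{t\in T_S}f_t$ for each $S$ and therefore $\UB(f+\alpha d)<\UB(f)$. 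The main obstacle, namely uniformity of $d$ across all admissible $f$, is resolved by the observation that the formula~\eqref{eq:deact_exists} involves only $A$, $R=A$, and the structure $(D,V,C)$; all sensitivity to $f$ is absorbed into the line-search step size from Theorem~\ref{th:improve_bound}.
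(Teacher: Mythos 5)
Your proof is correct and follows essentially the same route as the paper's: both handle the vacuous case where some scope has no allowed tuple, build an $\CSP$-deactivating direction via Theorem~\ref{th:deact_exists} (which depends only on $\CSP$) for the ``only if'' direction with the step size supplied by Theorem~\ref{th:improve_bound}, and reduce the ``if'' direction to Theorem~\ref{th:CSPcert}. Your write-up is merely more explicit than the paper's (e.g., exhibiting a concrete $f$ with $A^*(f)=\CSP$ in the contrapositive and noting that uniformity of $d$ over $f$ comes from \eqref{eq:deact_exists} depending only on $\CSP$), which are details the paper leaves implicit.
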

\begin{proof}
First, if for some~$\scope\in\constraints$ we have that $\CSP\cap \TPA{\scope}=\emptyset$, $\CSP$ is unsatisfiable and no $f\in\R^T$ satisfies $\CSP=A^*(f)$, so the second condition is trivially satisfied by choosing any $d\in M^*$.

Otherwise, let $d$ be any $\CSP$-deactivating direction (which exists by Theorem~\ref{th:deact_exists}). It follows from Theorem~\ref{th:improve_bound} that for any~$f\in\mathbb{R}^T$ with~$A^*(f)=\CSP$, we can compute a suitable step size~$\alpha > 0$ such that~$\UB(f+\alpha d)<\UB(f)$. The remaining part follows from Theorem~\ref{th:CSPcert}.
\end{proof}

\subsection{Final Algorithm}\label{se:final_alg}

\begin{algorithm}[t]
\caption{The final algorithm to iteratively improve feasible solutions to~\eqref{eq:LP}.} \label{al:composed_algorithm}
\hspace*{\algorithmicindent}\textbf{input:} {WCSP~$g\in\mathbb{R}^T$}
\begin{algorithmic}[1]
\State Initialize $f := g$.
\Repeat
  \State $(\scope,(\deactivating_i)_{i=0}^n,(d^i)_{i=0}^n):=\text{\tt propagate}(A^*(f))$
  \If{$\scope\neq\emptyset$}
     \State Define $\chosenindices$ as in~\eqref{eq:chosenindices}.
     \State $(\Mset,d^*):=\text{\tt compose}((\deactivating_i)_{i=0}^n,(d^i)_{i=0}^n,\chosenindices)$
     \State {Update $f := f+\min\{\beta,\gamma\} d^*$ following Theorem~\ref{th:improve_bound}.}
  \EndIf
\Until{$\scope=\emptyset$}
\State \Return{{$\UB(f)$}}
\end{algorithmic}
\end{algorithm}

Having certificates of unsatisfiability from~\S\ref{se:improving_directions} and step sizes from~\S\ref{se:line_search}, we can now formulate in detail the iterative method outlined in~\S\ref{se:constraint_propagation_scheme}, see Algorithm~\ref{al:composed_algorithm}. First, constraint propagation is applied to CSP~$A^*(f)$ by Algorithm~\ref{al:propagation_with_storing_vectors} until either $A^*(f)$ is proved unsatisfiable or no more propagation is possible. In the latter case, the algorithm halts and returns~$\UB(f)$ as the best achieved upper bound on the optimal value of WCSP~$g$. Otherwise, if $A^*(f)$ is proved unsatisfiable due to $A_{n+1}\cap \TPA{\scope}=\emptyset$ for some $\scope\in \constraints$, define~$\chosenindices$ as in~\eqref{eq:chosenindices} so that $(A^*(f)-\bigcup_{i\in \chosenindices} \deactivating_i)\cap \TPA{\scope}=\emptyset$, and compute an $\Mset$-deactivating direction~$d^*$ where $\Mset\supseteq \bigcup_{i\in \chosenindices} \deactivating_i$ using Proposition~\ref{th:composition_of_some_vectors}. Since $(A^*(f)-\Mset)\cap \TPA{\scope}=\emptyset$, we can update WCSP~$f$ using Theorem~\ref{th:improve_bound}. Consequently,  the bound~$\UB(f)$ strictly improves after each update on line~7.

\begin{remark}
In the maximization version of WCSP, hard constraints can be modelled by allowing minus-infinite weights, i.e., we then have~$g\in(\mathbb{R}\cup\{-\infty\})^T$. We argue that Algorithm~\ref{al:composed_algorithm} can be easily extended to such a setting. Without loss of generality, one can assume that
\begin{equation}\label{eq:assumption_infinite}
    \forall \scope\in\constraints \,\exists t\in T_S\colon g_t\in\mathbb{R},
\end{equation}
i.e., there is at least one finite weight in each scope for the input WCSP~$g$ (as otherwise the WCSP is infeasible).

With this assumption, the definition of the active-tuple CSP~$A^*(\cdot)$ remains unchanged and the tuples with minus-infinite weights are never active. Next, see that propagation in the active-tuple CSP and construction of the improving direction depend only on~$A^*(f)$, so these subroutines need not be modified and, consequently, the improving direction~$d^*$ still contains only finite weights, i.e., $d^*\in\mathbb{R}^T$.

The only difference can arise when computing the step size~$\alpha=\min\{\beta,\gamma\}$ by Theorem~\ref{th:improve_bound}. If~$f\in\mathbb{R}^T$, then~$\alpha$ is always finite. In contrast, if~$f\in(\mathbb{R}\cup\{-\infty\})^T$, then it may happen that $\beta=\gamma=\infty$, so~$\alpha=\min\{\beta,\gamma\}=\infty$ where we assume the usual arithmetic with infinities, so, e.g., $a-(-\infty)=\infty$ for~$a\in\mathbb{R}$. As discussed earlier, the weights of the active tuples are always finite, which avoids indeterminate expressions when computing~$\beta$ and~$\gamma$. Note, arithmetic with infinities is different from the addition with ceiling operator~\cite{Cooper-AI-2010,larrosa2004solving} (unless the ceiling is infinite).

Next, we comment on the finite and infinite case:
\begin{itemize}
    \item If the computed step size is finite, then $\alpha d^*\in\mathbb{R}^T$ and the update on line~7 can be performed following the aforementioned arithmetic with infinities. In this case, condition~\eqref{eq:assumption_infinite} holds for the updated~$f$ since the set of tuples with minus-infinite weight (i.e., the set~$\{t\in T\mid f_t=-\infty\}$) is kept unchanged by the update and we can continue with the next iteration.
    \item On the other hand, if the step size is infinite, then the bound~$B(f+\alpha d^*)$ can be made arbitrarily low by setting~$\alpha$ large enough. Stated formally, this means $\forall b\in \mathbb{R}\,\exists \alpha>0\colon B(f+\alpha d^*)\leq b$, which proves infeasibility of the WCSP instance, so the algorithm should return~$-\infty$ and terminate.
\end{itemize}

All in all, if hard constraints are allowed, the only required change in Algorithm~\ref{al:composed_algorithm} is that, if~$\beta=\gamma=\infty$ on line~7, then the algorithm should terminate and return~$-\infty$ (which is an upper bound on the infeasible initial WCSP).
\end{remark}

In Algorithm~\ref{al:composed_algorithm} we additionally used a heuristic analogous to \emph{capacity scaling\/} in network flow algorithms~\cite[\S7.3]{ahuja1993network}. On line~3 of Algorithm~\ref{al:composed_algorithm}, we replace the active tuples~$A^*(f)$ with `almost' active tuples
\begin{equation}\label{eq:theta-active}
    A^*_\theta(f) = \ourset[\Big]{t=(\scope,k) \in T \mid f_t \geq \max_{t' \in \TPA{\scope}}f_{t'}- \theta}
\end{equation}
for some threshold $\theta>0$.\footnote{This is similar to the notion of Bool$_\theta(f)$ in~\cite[\S11.1]{Cooper-AI-2010}, tolerance~$\delta$ in~\cite[\S4.2]{werner-dlask-maxsat}, and mi$_\epsilon[f]$ in~\cite[\S6.2.4]{Bogdan-book-2019}.} This forces the algorithm to disprove satisfiability using tuples that are far from being active, thus hopefully leading to larger step sizes and faster decrease of the bound. Initially, $\theta$~is set to a high value and whenever we are unable to disprove satisfiability of~$A^*_\theta(f)$,  the current~$\theta$ is decreased as $\theta := \theta/10$. The process continues until $\theta$ becomes very small.

Although our theoretical results are more general, our implementation is limited only to binary WCSPs, i.e., instances where the maximum arity of the weighted constraints is at most 2. We implemented two versions of Algorithm~\ref{al:composed_algorithm} (including capacity scaling\footnote{\label{foo:decreasing}In detail, we initialized $\theta=\max_{k_i,k_j}g_{\{i,j\}}(k_i,k_j)-\min_{k_i,k_j}g_{\{i,j\}}(k_i,k_j)+\max_{k} g_{i'}(k)-\min_k g_{i'}(k)$ where $\{i,j\}\in \constraints$ and $i'\in \vars$ is the edge and variable with the lowest index (based on indexing in the input instance), respectively. The terminating condition was~$\theta\leq10^{-6}$. Let us note that if capacity scaling is used with~$\theta>0$ and the construction of the improving direction is deterministic (which is the case in our implementation), then the method is guaranteed to terminate after a finite number of iterations. This follows from our more general results that we state in~\cite[\S2.2.1]{dlask-thesis}. In order to improve the efficiency of our method, we also decreased~$\theta$ whenever the bound did not improve by more than~$10^{-15}$ in 20 consecutive iterations.}), differing in the local consistency used to attempt to disprove satisfiability of CSP $\CSP^*(f)$:
\begin{itemize}
    \item \emph{Virtual singleton arc consistency via super-reparametrizations (VSAC-SR)}
    uses singleton arc consistency. Precisely, we alternate between AC and SAC propagators: whenever a single tuple $(i,k)\in \vars\times \domain$ is removed by SAC, we step back to enforcing AC until no more AC propagations are possible, and repeat.
    \item \emph{Virtual cycle consistency via super-reparametrizations (VCC-SR)} is the same as VSAC-SR except that SAC is replaced by CC. Though our implementation is different than~\cite{komodakis2008beyond} (we compose deactivating directions rather than alternate between the cycle-repair procedure and the Augmenting DAG algorithm), it has the same fixed points.
\end{itemize}
The procedures for generating deactivating directions for AC, SAC and CC were implemented as described in Examples~\ref{ex:ac}, \ref{ex:sac}, and~\ref{ex:cc}. We used AC3 algorithm to enforce AC. In SAC and CC it is useful to step back to AC whenever possible because deactivating directions of AC correspond to reparametrizations rather than super-reparametrizations, which is desirable as explained in~\S\ref{se:properties_of_the_method}.

\begin{remark}\label{re:virtual_consistencies}
In analogy to~\cite{Cooper-AI-2010,nguyen2017triangle}, let us call a WCSP instance~$f$ \emph{virtual $\Phi$-consistent} (e.g., virtual AC or virtual RPC) if $A^*(f)$~has a non-empty $\Phi$-consistency closure. Then, a \emph{virtual $\Phi$-consistency algorithm} naturally refers to an algorithm to transform a given WCSP instance to a virtual $\Phi$-consistent WCSP instance. In the VAC algorithm, this transformation is equivalence-preserving, i.e., a reparametrization. But in our case, it is a super-reparametrization, which is why we call our algorithms VSAC-SR and VCC-SR.
\end{remark}

Since we restricted ourselves to binary WCSPs, let $\edges=\ourset{\scope\in\constraints\mid |\scope|=2}$ so that $(\vars,\edges)$ is an undirected graph. The cycles in VCC-SR were chosen as follows: if $2|\edges|/|\vars|\leq 5$ (i.e., the average degree of the nodes in~$(\vars,\edges)$ is at most~5), then all cycles of length 3 and 4 present in the graph $(\vars,\edges)$ are used. If $2|\edges|/|\vars|\leq10$, then all cycles of length~3 present in the graph are used. If $2|\edges|/|\vars|>10$ or the above method did not result in any cycles, we use all fundamental cycles w.r.t.\ a spanning tree of the graph~$(\vars,\edges)$.\footnote{Let~$(\vars,\edges')$ be a spanning tree of~$(\vars,\edges)$. A \emph{fundamental cycle} w.r.t. the spanning tree is the unique cycle in the graph~$(\vars,\edges'\cup\{e\})$ where~$e\in \edges-\edges'$. By choosing different edges~$e\in\edges-\edges'$, we obtain the set of all~$|\edges-\edges'|$ fundamental cycles w.r.t. the spanning tree~\cite[Chapter~9]{rosen2000handbook}.} No additional edges are added to the graph. Note, \cite{komodakis2008beyond}~experimented with grid graphs (where cycles of length 4 and 6 of the grid were used) and complete graphs (where cycles of length 3 were used).

Since both VSAC-SR and VCC-SR start by enforcing VAC (i.e., making $A^*(f)$ arc consistent by reparametrizations), before running these methods we used toulbar2 to reparametrize the input WCSP instance to a VAC state (because a specialized algorithm is faster than the more general Algorithm~\ref{al:composed_algorithm}). We employed specialized data structures for storing the sequences~$(\deactivating_i)_{i=0}^n$ and~$(d^i)_{i=0}^n$ from Algorithm~\ref{al:propagation_with_storing_vectors}, which utilize the property that the sets~$(\deactivating_i)_{i=0}^n$ are disjoint and make easier sequential querying of (sparse) vectors~$(d^i)_{i=0}^n$ in Algorithm~\ref{al:composition_of_some_vectors}. Note that the sequence~$(A_i)_{i=0}^{n+1}$ need not be stored and is only needed for theoretical analysis. Moreover, sparse representations were used when composing deactivating directions in Algorithm~\ref{al:composition_of_some_vectors}. To avoid working with `structured' tuples~\eqref{eq:tuples}, we employed a bijection between $T$ and $\{1, \ldots, |T|\}$ to work with numerical indices instead.

Besides the above improvements, we did not fine-tune our implementation for efficiency. Thus, the set~$A^*(f)$ was always calculated by iterating through all tuples (which could be made faster if sparsity of the improving direction was taken into account). The hyper-parameters of our algorithm (e.g., the decrease schedule of~$\theta$ or constants mentioned in Footnote~\ref{foo:decreasing}) were not learned nor systematically optimized. SAC was checked on all active tuples without warm-starting or using any faster SAC algorithm than SAC1~\cite{bessiere2011efficient,Debruyne97somepracticable}. Perhaps most importantly, we did not implement inter-iteration warm-starting as in~\cite{Werner-TR-2005,dlask2018minimizing}, i.e., after updating the weights on line~6 of Algorithm~\ref{al:composed_algorithm}, some deactivating directions in the sequence that were not used to compose the improving direction may be preserved for the next iteration instead of being computed from scratch. Except for computing deactivating directions, the code was the same for VSAC-SR and VCC-SR. We implemented everything in Java.

\subsection{Experiments}\label{se:experiments}

\afterpage{%
    \clearpage
    \begin{landscape}
        \centering 
        \begin{table}
        \begin{footnotesize}
    \caption{Results on instances from Cost Function Library: Average normalized bounds (for each instance group, the best average normalized bound is in bold).}
    \label{tab:cfl-processed}
\resizebox{1.5\textwidth}{!}{
    \begin{tabular}{l|c|l|l|l|l|l|l|l|l|l}
Instance Group & \rotatebox{0}{Instances}& \rotatebox{0}{EDAC}& \rotatebox{0}{VAC}& \rotatebox{0}{VSAC-SR}& \rotatebox{0}{VCC-SR}& \rotatebox{0}{Pseudo-tr.}& \rotatebox{0}{PIC}& \rotatebox{0}{EDPIC}& \rotatebox{0}{maxRPC}& \rotatebox{0}{EDmaxRPC}\\ \hline
/biqmaclib/ & 157  & 0.02 & 0.11 & 0.90 & 0.22 & \textbf{0.92} & 0.83 & 0.81 & 0.79 & 0.81 \\
/crafted/academics/ & 8  & 0.88 & 0.88 & 0.97 & 0.95 & 0.88 & 0.88 & 0.88 & 0.88 & \textbf{1.00} \\
/crafted/auction/paths/ & 420  & 0.00 & 0.09 & 0.91 & 0.35 & \textbf{0.99} & 0.45 & 0.68 & 0.64 & 0.57 \\
/crafted/auction/regions/ & 411  & 0.00 & 0.05 & \textbf{0.99} & 0.10 & 0.98 & 0.08 & 0.18 & 0.23 & 0.13 \\
/crafted/auction/scheduling/ & 419  & 0.00 & 0.02 & \textbf{1.00} & 0.09 & 0.80 & 0.41 & 0.38 & 0.41 & 0.24 \\
/crafted/coloring/ & 33  & 0.94 & 0.94 & 0.99 & 0.97 & 0.98 & \textbf{1.00} & \textbf{1.00} & \textbf{1.00} & 0.99 \\
/crafted/feedback/ & 6  & 0.00 & 0.00 & 0.54 & 0.58 & 0.71 & 0.49 & 0.53 & 0.51 & \textbf{0.72} \\
/crafted/kbtree/ & 1800  & 0.25 & 0.29 & 0.60 & 0.67 & 0.80 & 0.73 & 0.81 & 0.76 & \textbf{0.89} \\
/crafted/maxclique/dimacs\_maxclique/ & 49  & 0.06 & 0.24 & \textbf{0.98} & 0.39 & 0.87 & 0.39 & 0.50 & 0.51 & 0.55 \\
/crafted/maxcut/spinglass\_maxcut/unweighted/ & 5  & 0.00 & 0.00 & \textbf{1.00} & 0.42 & 0.15 & 0.15 & 0.15 & 0.15 & 0.15 \\
/crafted/maxcut/spinglass\_maxcut/weighted/ & 5  & 0.00 & 0.00 & \textbf{1.00} & 0.38 & 0.17 & 0.17 & 0.17 & 0.17 & 0.17 \\
/crafted/modularity/ & 6  & 0.17 & 0.19 & 0.38 & 0.25 & \textbf{0.99} & 0.96 & 0.94 & 0.96 & 0.97 \\
/crafted/planning/ & 65  & 0.00 & 0.54 & \textbf{0.94} & 0.72 & 0.32 & 0.07 & 0.09 & 0.07 & 0.17 \\
/crafted/sumcoloring/ & 43  & 0.04 & 0.15 & 0.47 & 0.50 & \textbf{0.81} & 0.53 & 0.63 & 0.64 & 0.61 \\
/crafted/warehouses/ & 49  & 0.35 & 0.99 & \textbf{1.00} & 0.99 & 0.35 & 0.42 & 0.42 & 0.42 & 0.42 \\
/qaplib/ & 5  & 0.40 & 0.40 & 0.40 & 0.41 & \textbf{0.99} & 0.97 & 0.97 & 0.98 & 0.97 \\
/qplib/ & 23  & 0.00 & 0.10 & \textbf{0.96} & 0.38 & 0.27 & 0.25 & 0.25 & 0.24 & 0.25 \\
/random/maxcsp/completeloose/ & 50  & \textbf{1.00} & \textbf{1.00} & \textbf{1.00} & \textbf{1.00} & \textbf{1.00} & \textbf{1.00} & \textbf{1.00} & \textbf{1.00} & \textbf{1.00} \\
/random/maxcsp/completetight/ & 50  & 0.00 & 0.12 & 0.57 & 0.72 & 0.88 & 0.94 & \textbf{0.99} & 0.69 & 0.76 \\
/random/maxcsp/denseloose/ & 50  & \textbf{1.00} & \textbf{1.00} & \textbf{1.00} & \textbf{1.00} & \textbf{1.00} & \textbf{1.00} & \textbf{1.00} & \textbf{1.00} & \textbf{1.00} \\
/random/maxcsp/densetight/ & 50  & 0.02 & 0.14 & 0.52 & \textbf{1.00} & 0.68 & 0.48 & 0.49 & 0.52 & 0.60 \\
/random/maxcsp/sparseloose/ & 90  & 0.96 & 0.96 & \textbf{1.00} & 0.96 & 0.96 & 0.96 & 0.96 & 0.96 & 0.96 \\
/random/maxcsp/sparsetight/ & 50  & 0.01 & 0.12 & 0.54 & \textbf{1.00} & 0.64 & 0.40 & 0.40 & 0.43 & 0.51 \\
/random/maxcut/random\_maxcut/ & 400  & 0.00 & 0.00 & 0.77 & 0.13 & 0.95 & 0.98 & 0.98 & 0.97 & \textbf{0.99} \\
/random/mincut/ & 500  & 0.09 & \textbf{1.00} & \textbf{1.00} & \textbf{1.00} & 0.10 & 0.10 & 0.10 & 0.10 & 0.10 \\
/random/randomksat/ & 493  & 0.01 & 0.02 & 0.75 & 0.22 & \textbf{0.95} & 0.91 & 0.89 & 0.86 & 0.87 \\
/random/wqueens/ & 6  & 0.00 & 0.52 & \textbf{0.96} & 0.94 & 0.48 & 0.12 & 0.29 & 0.13 & 0.72 \\
/real/celar/ & 23  & 0.00 & 0.05 & 0.08 & 0.16 & \textbf{0.97} & 0.66 & 0.66 & 0.78 & 0.95 \\
/real/maxclique/protein\_maxclique/ & 1  & 0.00 & 0.00 & \textbf{1.00} & 0.03 & 0.93 & 0.04 & 0.04 & 0.08 & 0.04 \\
/real/spot5/ & 1  & 0.00 & 0.08 & \textbf{1.00} & 0.49 & \textbf{1.00} & 0.74 & 0.66 & 0.41 & 0.74 \\
/real/tagsnp/tagsnp\_r0.5/ & 23  & 0.04 & 0.86 & \textbf{0.95} & 0.86 & 0.31 & 0.31 & 0.33 & 0.29 & 0.46 \\
/real/tagsnp/tagsnp\_r0.8/ & 80  & 0.13 & 0.66 & \textbf{0.91} & 0.68 & 0.29 & 0.39 & 0.38 & 0.33 & 0.47 \\ \hline\hline
\textbf{Average over all groups} & 5371 & 0.20 & 0.36 & \textbf{0.82} & 0.58 & 0.72 & 0.56 & 0.58 & 0.56 & 0.62 \\
\textbf{Average over groups with $\geq 5$ instances} & 5369 & 0.21 & 0.38 & \textbf{0.80} & 0.60 & 0.71 & 0.57 & 0.59 & 0.58 & 0.63 \\
    \end{tabular}
    } 
    \end{footnotesize}
    \end{table}
    \end{landscape}
    \clearpage
}

\afterpage{%
    \clearpage
    \begin{landscape}
        \centering 
        \begin{table}
        \begin{footnotesize}
        \caption{Results on instances from Cost Function Library: Average CPU time in seconds (for each instance group, the shortest average CPU time is in bold).}
    \label{tab:cfl-processed-time}
    \resizebox{1.5\textwidth}{!}{
    \begin{tabular}{l|c|r|r|r|r|r|r|r|r|r}
Instance Group & \rotatebox{0}{Instances}&\rotatebox{0}{EDAC}& \rotatebox{0}{VAC}& \rotatebox{0}{VSAC-SR}& \rotatebox{0}{VCC-SR}& \rotatebox{0}{Pseudo-tr.}& \rotatebox{0}{PIC}& \rotatebox{0}{EDPIC}& \rotatebox{0}{maxRPC}& \rotatebox{0}{EDmaxRPC}\\ \hline
/biqmaclib/ & 157 & \textbf{0.11} & 0.12 & 180.07 & 34.60 & 83.25 & 1240.00 & 1241.29 & 1242.16 & 1271.86 \\
/crafted/academics/ & 8 & \textbf{0.11} & \textbf{0.11} & 28.61 & 1.04 & 29.08 & 121.44 & 120.86 & 108.08 & 104.47 \\
/crafted/auction/paths/ & 420 & \textbf{0.04} & \textbf{0.04} & 1.96 & 0.83 & 1.92 & 0.19 & 0.23 & 0.48 & 0.64 \\
/crafted/auction/regions/ & 411 & \textbf{0.20} & 0.32 & 32.14 & 9.45 & 673.42 & 49.85 & 51.37 & 102.61 & 110.48 \\
/crafted/auction/scheduling/ & 419 & \textbf{0.10} & 0.12 & 16.22 & 2.03 & 49.85 & 26.90 & 26.89 & 32.06 & 32.30 \\
/crafted/coloring/ & 33 & \textbf{0.09} & 0.10 & 4.99 & 1.40 & 0.20 & 545.50 & 545.50 & 545.51 & 545.50 \\
/crafted/feedback/ & 6 & \textbf{0.70} & \textbf{0.70} & 3588.39 & 3600.11 & 11.64 & 1860.89 & 1874.08 & 1875.93 & 1873.07 \\
/crafted/kbtree/ & 1800 & \textbf{0.02} & \textbf{0.02} & 3.13 & 11.25 & 0.10 & 0.04 & 0.05 & 0.06 & 0.07 \\
/crafted/maxclique/dimacs\_maxclique/ & 49 & \textbf{0.71} & 1.32 & 279.08 & 126.90 & 955.60 & 1345.67 & 1342.14 & 1429.73 & 1428.12 \\
/crafted/maxcut/spinglass\_maxcut/unweighted/ & 5 & 0.02 & 0.02 & 0.82 & 0.44 & 0.02 & \textbf{0.01} & \textbf{0.01} & \textbf{0.01} & \textbf{0.01} \\
/crafted/maxcut/spinglass\_maxcut/weighted/ & 5 & 0.02 & 0.02 & 1.09 & 0.53 & 0.02 & \textbf{0.01} & \textbf{0.01} & \textbf{0.01} & \textbf{0.01} \\
/crafted/modularity/ & 6 & \textbf{0.19} & 0.29 & 1023.48 & 127.39 & 66.25 & 706.30 & 783.02 & 741.91 & 1442.57 \\
/crafted/planning/ & 65 & \textbf{0.16} & 0.29 & 638.85 & 60.62 & 7.41 & 0.93 & 0.96 & 2.33 & 4.73 \\
/crafted/sumcoloring/ & 43 & \textbf{1.29} & 1.94 & 727.49 & 963.61 & 255.72 & 1508.37 & 1508.36 & 1509.34 & 1512.68 \\
/crafted/warehouses/ & 49 & 4.10 & 9.48 & 735.80 & 735.83 & \textbf{4.09} & 29.48 & 29.54 & 28.80 & 29.82 \\
/qaplib/ & 5 & \textbf{0.08} & 0.09 & 119.05 & 278.53 & 7.38 & 1448.63 & 1444.95 & 1450.09 & 1449.22 \\
/qplib/ & 23 & \textbf{0.13} & 0.14 & 255.85 & 43.11 & 195.32 & 626.25 & 626.24 & 626.27 & 626.36 \\
/random/maxcsp/completeloose/ & 50 & \textbf{0.06} & \textbf{0.06} & 1.31 & 0.16 & 0.48 & 0.09 & 0.10 & 0.19 & 0.18 \\
/random/maxcsp/completetight/ & 50 & \textbf{0.02} & 0.03 & 6.35 & 12.68 & 0.47 & 0.21 & 0.25 & 0.31 & 0.33 \\
/random/maxcsp/denseloose/ & 50 & \textbf{0.02} & \textbf{0.02} & 166.78 & 0.06 & 0.11 & 0.03 & 0.03 & 0.03 & 0.03 \\
/random/maxcsp/densetight/ & 50 & \textbf{0.02} & \textbf{0.02} & 4.20 & 17.38 & 0.10 & 0.06 & 0.07 & 0.07 & 0.08 \\
/random/maxcsp/sparseloose/ & 90 & \textbf{0.03} & \textbf{0.03} & 611.38 & 0.05 & 0.06 & 0.04 & 0.04 & 0.04 & 0.04 \\
/random/maxcsp/sparsetight/ & 50 & \textbf{0.02} & \textbf{0.02} & 11.00 & 9.74 & 0.06 & 0.04 & 0.05 & 0.05 & 0.05 \\
/random/maxcut/random\_maxcut/ & 400 & \textbf{0.01} & \textbf{0.01} & 0.73 & 0.15 & 0.04 & 0.03 & 0.03 & 0.05 & 0.07 \\
/random/mincut/ & 500 & 1.09 & 2.43 & 14.40 & 86.22 & 1.12 & 0.88 & \textbf{0.87} & \textbf{0.87} & \textbf{0.87} \\
/random/randomksat/ & 493 & \textbf{0.02} & \textbf{0.02} & 3.42 & 0.17 & 0.13 & 0.07 & 0.10 & 0.16 & 0.31 \\
/random/wqueens/ & 6 & \textbf{1.33} & 1.49 & 992.85 & 502.42 & 644.87 & 1800.15 & 1800.20 & 1800.18 & 1800.60 \\
/real/celar/ & 23 & \textbf{0.27} & 0.28 & 1798.51 & 2972.69 & 66.56 & 300.76 & 219.91 & 495.26 & 1066.87 \\
/real/maxclique/protein\_maxclique/ & 1 & \textbf{0.26} & 0.44 & 25.24 & 6.77 & 1196.62 & 114.62 & 114.99 & 215.30 & 220.81 \\
/real/spot5/ & 1 & \textbf{0.01} & \textbf{0.01} & 0.62 & 0.08 & 0.11 & 0.03 & 0.03 & 0.04 & 0.04 \\
/real/tagsnp/tagsnp\_r0.5/ & 23 & \textbf{4.83} & 378.77 & 3338.53 & 2897.83 & 239.38 & 3155.96 & 3148.66 & 3172.58 & 3295.19 \\
/real/tagsnp/tagsnp\_r0.8/ & 80 & \textbf{1.52} & 22.82 & 1239.73 & 858.83 & 90.05 & 195.12 & 206.76 & 359.55 & 409.88 \\ \hline\hline
\textbf{Average over all groups} & 5371 & \textbf{0.55} & 13.17 & 495.38 & 417.59 & 143.17 & 471.21 & 471.49 & 491.88 & 538.35 \\
\textbf{Average over groups with $\geq 5$ instances} & 5369 & \textbf{0.58} & 14.04 & 527.54 & 445.20 & 112.82 & 498.80 & 499.08 & 517.49 & 566.88  \\
 \end{tabular}
 } 
\end{footnotesize}
    \end{table}
    \end{landscape}
    \clearpage
}

We compared the bounds calculated by VSAC-SR and VCC-SR with the bounds provided by EDAC~\cite{de2005existential}, VAC~\cite{Cooper-AI-2010}, pseudo-triangles (option \texttt{-t=8000} in toulbar2, adds up to 8~GB of ternary weight functions), PIC, EDPIC, maxRPC, and EDmaxRPC~\cite{nguyen2017triangle}, which are implemented in toulbar2~\cite{toulbar2}. Our motivation for choosing these local consistencies is as follows: EDAC is the typically chosen local consistency that is maintained during branch-and-bound search. VAC is highly related to our approach and can be used in pre-processing (as it is faster than OSAC which is usually too memory- and time-consuming for practical purposes). Finally, we consider a class of recently proposed triangle-based consistencies~\cite{nguyen2017triangle} that enforce stronger forms of local consistency.

We did the comparison on the Cost Function Library benchmark~\cite{cost-function-library}. Due to limited computation resources, we used only the smallest 16500~instances (out of~18132). Of these, we omitted instances containing weight functions of arity~3 or higher. Moreover, to avoid easy instances, we omitted instances that were solved by VAC without search (i.e., toulbar2 with options \texttt{-A -bt=0} found an optimal solution). We also omitted the \emph{validation\/} instances that are used for testing and debugging. Overall, 5371~instances were left for our comparison.

For each instance and each method, we only calculated the upper bound and did not do any search. For each instance and method, we computed the normalized bound $\frac{B_w-B_m}{B_w-B_b}$ where $B_m$ is the bound computed by the method for the instance and $B_w$ and~$B_b$ is the worst and best bound for the instance among all the methods, respectively. Thus, the best bound\footnote{To avoid numerical precision issues, bounds~$B_m$ within $B_b\pm 10^{-4} B_b$ or $B_b\pm 0.01$ are also normalized to~1. If $B_w=B_b$, then the normalized bounds for all methods are equal to~1 on this instance.} transforms to~1 and the worst bound to~0, i.e., greater is better.

For 26~instances, at least one method was not able to finish in the prespecified 1\nobreakdash-hour CPU-time limit. These timed-out methods were omitted from the calculation of the normalized bounds for these instances. From the point of view of the method, the instance was not incorporated into the average of the normalized bounds of this particular method. We note that implementations of VSAC-SR and VCC-SR provide a bound when terminated at any time, whereas the implementations of the other methods provide a bound only when they are left to finish. Time-out happened 5, 2, 3, 6, and 24 times for pseudo-triangles, PIC, EDPIC, maxRPC, and EDmaxRPC, respectively. This did not affect the results much as there were 5371~instances in total.

The results in Table~\ref{tab:cfl-processed} show that no method is best for all instance groups, instead, each method is suitable for a different group. However, VSAC-SR performed best for most groups and otherwise was often competitive to the other strong consistency methods. VSAC-SR seems particularly good at spinglass\_maxcut~\cite{spinglass}, planning~\cite{Cooper2011} and qplib~\cite{Furini2019} instances. Taking the overall unweighted average of group averages (giving the same importance to each group), VSAC-SR achieved the greatest average value.
We also evaluated the ratio to worst bound, $B_m/B_w$, for instances with $B_w\neq 0$; the results were qualitatively the same: VSAC-SR again achieved the best overall average of~3.93 (or~4.15 if only groups with $\geq5$~instances are considered) compared to second-best pseudo-triangles with~2.71 (or~2.84).

The runtimes (on a laptop with i7-4710MQ processor at 2.5~GHz and 16GB RAM) are reported in Table~\ref{tab:cfl-processed-time}. Again, the results are group-dependent and one can observe that the methods explore different trade-offs between bound quality and runtime. However, the strong consistencies are comparable in terms of runtime on average, except for pseudo-triangles, which is a faster method that however needs significantly more memory.

The code that was used to obtain these results is available at~\url{https://cmp.felk.cvut.cz/~dlaskto2/code/VSAC-SR.zip}.

\section{Additional Properties of Super-Reparametrizations}\label{se:theoretical_properties} 

In this section, we present a more detailed study of properties of WCSPs that are preserved by (possibly optimal) super-reparametrizations. To that end, we first revisit in~\S\ref{se:minimal_csp} the notion of a minimal CSP for a set of assignments. The key result of~\S\ref{se:theoretical_properties} is presented in~\S\ref{se:props_optimal}, where we study the relation of the set of optimal assignments of some WCSP to the set of optimal assignments of its super-reparametrization optimal for~\eqref{eq:LP}, showing that they need not coincide in general. In~\S\ref{se:arbitrary_suprepar}, we give some properties of general (i.e., not necessarily optimal for~\eqref{eq:LP}) super-reparametrizations.

\subsection{Minimal CSP}\label{se:minimal_csp}

Let us ask when for a given set $X\subseteq D^V$ of assignments (i.e., a $|V|$-ary relation over~$\domain$) does there exist $A\subseteq T$ such that $X=\SOL(A)$, i.e., when is~$X$ representable as the solution set of a CSP with a given structure $(D,V,C)$. For that, denote
\begin{equation}\label{eq:minCSP}
\textstyle
\minimal(X) = \bigcap \mathcal{A}^\uparrow(X)
\quad\text{where}\quad
\mathcal{A}^\uparrow(X) = \ourset{ A\subseteq T \mid X\subseteq\SOL(A) } .
\end{equation}
Thus, $\mathcal{A}^\uparrow(X)$ is the set of all CSPs whose solution set includes~$X$ and $\minimal(X)$ is the intersection of these CSPs. We call $\minimal(X)$ the \emph{minimal CSP} for~$X$. For CSPs with only binary relations, this concept was studied in~\cite{montanari1974networks} and~\cite[\S2.3.2]{dechter2003constraint}.

\begin{proposition}\label{pr:sol_intersection}
The map $\SOL$ preserves intersections\,\footnote{\label{foo:isotone}This implies that the map $\SOL$ is isotone, i.e., $A_1\subseteq A_2\subseteq T$ implies $\SOL(A_1)\subseteq\SOL(A_2)$. Although isotony is obvious (clearly, enlarging the set of allowed tuples of a CSP preserves or enlarges its solution set), note that isotony does not imply preserved intersections.
A weaker result than our Proposition~\ref{pr:sol_intersection} is \cite[Theorem~3.2]{montanari1974networks}: in our notation, it says that $\SOL(A_1)=\SOL(A_2)$ implies $\SOL(A_1\cap A_2)=\SOL(A_1)$.}, i.e., for any $A_1,A_2\subseteq T$ we have $\SOL(A_1\cap A_2)=\SOL(A_1)\cap \SOL(A_2)$.
\end{proposition}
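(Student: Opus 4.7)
The plan is a direct unfolding of the definition of $\SOL$ together with the elementary fact that set-theoretic intersection corresponds to conjunction of membership, and that universal quantification distributes over conjunction.

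Concretely, I would chase the following equivalences for an arbitrary assignment $x \in D^V$:
\begin{align*}
x \in \SOL(A_1 \cap A_2)
&\;\Longleftrightarrow\; \forall S \in C\colon (S, x[S]) \in A_1 \cap A_2 \\
&\;\Longleftrightarrow\; \forall S \in C\colon \bigl((S, x[S]) \in A_1 \text{ and } (S, x[S]) \in A_2\bigr) \\
&\;\Longleftrightarrow\; \bigl(\forall S \in C\colon (S, x[S]) \in A_1\bigr) \text{ and } \bigl(\forall S \in C\colon (S, x[S]) \in A_2\bigr) \\
&\;\Longleftrightarrow\; x \in \SOL(A_1) \text{ and } x \in \SOL(A_2) \\
&\;\Longleftrightarrow\; x \in \SOL(A_1) \cap \SOL(A_2).
\end{align*}
The first and fourth equivalences are the definition of $\SOL$ given in~\S\ref{se:notation}, the second is the definition of intersection, the third is the distributivity of $\forall$ over conjunction, and the last is the definition of intersection again. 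Since $x$ was arbitrary, the two sets coincide.

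There is no real obstacle here: the statement is essentially a tautology once $\SOL$ is written out. The only thing worth flagging is that the analogous statement for unions fails (enlarging the set of allowed tuples via union can create new solutions that are solutions to neither $A_1$ nor $A_2$), which is why the footnote emphasizes that isotony of $\SOL$ is strictly weaker than preservation of intersections.
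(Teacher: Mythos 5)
Your proof is correct and is essentially identical to the paper's own argument: both unfold the definition of $\SOL$ for an arbitrary assignment $x$ and use the distributivity of the universal quantifier over conjunction to establish the chain of equivalences. No further comment is needed.
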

\begin{proof}
For any $x\in\domain^\vars$ we have
\begin{align*}
    x\in \SOL(A_1)\cap \SOL(A_2) &\iff x\in \SOL(A_1), \; x\in\SOL(A_2)\\
    &\iff \forall \scope\in\constraints\colon (\scope,x[\scope])\in A_1, \; (\scope,x[\scope])\in A_2\\
    & \iff \forall \scope\in\constraints\colon (\scope,x[\scope])\in A_1 \cap A_2\\
    & \iff x\in \SOL(A_1\cap A_2). \qedhere
\end{align*}
\end{proof}

\begin{proposition}\label{pr:clo_intersection}
For any $X\subseteq D^V$, the set $\mathcal{A}^\uparrow(X)$ is closed under intersections, i.e., for any $A_1,A_2\subseteq T$ we have $A_1,A_2\in \mathcal{A}^\uparrow(X)\implies A_1\cap A_2\in \mathcal{A}^\uparrow(X)$.
\end{proposition}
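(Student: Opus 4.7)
The plan is to derive this as an almost immediate corollary of Proposition~\ref{pr:sol_intersection}, which was just established. Unfolding the definition of $\mathcal{A}^\uparrow(X)$, the hypothesis $A_1, A_2 \in \mathcal{A}^\uparrow(X)$ means exactly that $X \subseteq \SOL(A_1)$ and $X \subseteq \SOL(A_2)$, hence $X \subseteq \SOL(A_1) \cap \SOL(A_2)$. Applying Proposition~\ref{pr:sol_intersection} to rewrite the right-hand side gives $X \subseteq \SOL(A_1 \cap A_2)$, which is precisely the statement $A_1 \cap A_2 \in \mathcal{A}^\uparrow(X)$.

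The proof thus reduces to a two-line chain of set inclusions, with the only substantive input being the intersection-preserving property of $\SOL$ already proved. There is no real obstacle here: the statement is essentially a restatement of Proposition~\ref{pr:sol_intersection} at the level of the indexed family $\mathcal{A}^\uparrow(X)$. One could equivalently phrase the argument without invoking Proposition~\ref{pr:sol_intersection}, by going directly from the definitions: for any $x \in X$ and any $\scope \in \constraints$, the conditions $(\scope, x[\scope]) \in A_1$ and $(\scope, x[\scope]) \in A_2$ (which follow from $x \in \SOL(A_1) \cap \SOL(A_2)$) together give $(\scope, x[\scope]) \in A_1 \cap A_2$, so $x \in \SOL(A_1 \cap A_2)$. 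However, citing Proposition~\ref{pr:sol_intersection} keeps the proof cleaner and makes explicit the logical dependency between the two statements.

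Since this result is about closure of $\mathcal{A}^\uparrow(X)$ under binary intersection, and $\mathcal{A}^\uparrow(X)$ is clearly non-empty (it contains $T$ itself, as $\SOL(T) = \domain^\vars \supseteq X$), by induction it is closed under arbitrary finite intersections, which in turn justifies that $\minimal(X) = \bigcap \mathcal{A}^\uparrow(X)$ as defined in~\eqref{eq:minCSP} is itself a member of $\mathcal{A}^\uparrow(X)$ (provided $T$ is finite, as is the case here since $D$, $V$, $C$ are finite). This closure property is presumably the reason for stating the proposition, so the proof itself requires no further work beyond the short chain sketched above.
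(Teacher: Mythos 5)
Your proof is correct and is essentially identical to the paper's: both reduce the claim to the identity $\SOL(A_1)\cap\SOL(A_2)=\SOL(A_1\cap A_2)$ from Proposition~\ref{pr:sol_intersection} via the chain $X\subseteq\SOL(A_1)\cap\SOL(A_2)=\SOL(A_1\cap A_2)$. Your closing remarks on finite intersections and the membership $\minimal(X)\in\mathcal{A}^\uparrow(X)$ correctly anticipate how the paper uses the proposition immediately afterwards.
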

\begin{proof}
If $X\subseteq\SOL(A_1)$ and $X\subseteq\SOL(A_2)$, then $X\subseteq\SOL(A_1)\cap\SOL(A_2)=\SOL(A_1\cap A_2)$, where the equality holds by Proposition~\ref{pr:sol_intersection}.
\end{proof}

Proposition~\ref{pr:clo_intersection} implies $\minimal(X)\in\mathcal{A}^\uparrow(X)$, i.e., $X\subseteq\SOL(\minimal(X))$. This shows that $\minimal(X)$ is the smallest CSP whose solution set includes~$X$. 
It follows that $X=\SOL(\minimal(X))$ if and only if $X=\SOL(A)$ for some $A\subseteq T$.

The minimal CSP for~$X$ can be equivalently defined in terms of tuples:

\begin{proposition}[\cite{montanari1974networks,dechter2003constraint}]\label{pr:amin_alt}
We have $\minimal(X) = \ourset{(S,k)\in T \mid \exists x\in X\colon x[\scope]=k}$.
\end{proposition}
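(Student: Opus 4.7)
The plan is to prove the two inclusions separately, letting $A^\star$ denote the right-hand side $\ourset{(S,k)\in T \mid \exists x\in X\colon x[\scope]=k}$. By definition $\minimal(X) = \bigcap \mathcal{A}^\uparrow(X)$, so equivalently I want to show that $A^\star$ is the minimum element of $\mathcal{A}^\uparrow(X)$ under inclusion.

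First, I would verify $A^\star \in \mathcal{A}^\uparrow(X)$, i.e., $X \subseteq \SOL(A^\star)$. Pick any $x \in X$ and any $\scope \in \constraints$; then $(\scope, x[\scope])$ lies in $A^\star$ (using $x$ itself as the witness in the defining condition), so $x$ uses only allowed tuples of $A^\star$, hence $x \in \SOL(A^\star)$. Consequently $\minimal(X) \subseteq A^\star$, since $\minimal(X)$ is an intersection that now includes $A^\star$ as one of the intersected sets.

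For the reverse inclusion, I would show $A^\star \subseteq A$ for every $A \in \mathcal{A}^\uparrow(X)$, which immediately yields $A^\star \subseteq \bigcap \mathcal{A}^\uparrow(X) = \minimal(X)$. Fix such an $A$, so $X \subseteq \SOL(A)$, and pick $(\scope, k) \in A^\star$. By definition of $A^\star$ there exists $x \in X$ with $x[\scope] = k$, and since $x \in \SOL(A)$, the tuple $(\scope, x[\scope]) = (\scope, k)$ must belong to $A$, as required.

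Combining the two inclusions gives $\minimal(X) = A^\star$. There is essentially no obstacle here: the argument is a direct unwinding of the definitions of $\SOL$, $\mathcal{A}^\uparrow$, and $\minimal$, and does not need Proposition~\ref{pr:sol_intersection} or Proposition~\ref{pr:clo_intersection} (although those propositions justify a posteriori that the intersection $\minimal(X)$ itself lies in $\mathcal{A}^\uparrow(X)$, consistent with our identification $\minimal(X)=A^\star$).
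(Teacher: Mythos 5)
Your proof is correct, and the first inclusion ($\minimal(X)\subseteq A^\star$ via $A^\star\in\mathcal{A}^\uparrow(X)$) is identical to the paper's. The second inclusion is where you diverge: the paper argues by contradiction, taking a hypothetical tuple in $A^\star-\minimal(X)$ and deriving a violation of $X\subseteq\SOL(\minimal(X))$ --- a fact that itself rests on Proposition~\ref{pr:clo_intersection} (closure of $\mathcal{A}^\uparrow(X)$ under intersections). You instead show directly that $A^\star$ is a lower bound for \emph{every} $A\in\mathcal{A}^\uparrow(X)$: any $(\scope,k)\in A^\star$ is witnessed by some $x\in X\subseteq\SOL(A)$, which forces $(\scope,x[\scope])=(\scope,k)\in A$, hence $A^\star\subseteq\bigcap\mathcal{A}^\uparrow(X)$. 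This makes your argument self-contained --- as you note, it needs neither Proposition~\ref{pr:sol_intersection} nor Proposition~\ref{pr:clo_intersection} --- and it establishes as a byproduct that $A^\star$ is the minimum element of $\mathcal{A}^\uparrow(X)$, from which $\minimal(X)\in\mathcal{A}^\uparrow(X)$ and the interval description~\eqref{eq:Ainterval} follow immediately. The paper's route is no longer but leans on machinery it has already built; yours would let Proposition~\ref{pr:amin_alt} be stated and proved before the intersection-closure results if one wished to reorder the section.
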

\begin{proof}
Denote $A'=\ourset{(S,k)\in T \mid \exists x\in X\colon x[\scope]=k}$. By definition of~$A'$ we have $\SOL(A')\supseteq X$, so $A'\in \mathcal{A}^\uparrow(X)$ and $\minimal(X)=\bigcap \mathcal{A}^\uparrow(X)  \subseteq A'$.

It remains to show that $\minimal(X)\supseteq A'$. For contradiction, suppose there is a tuple $(S^*,k^*)\in A'-\minimal(X)$. By definition of~$A'$, there exists $x\in X$ such that $x[S^*]=k^*$. However, since $(S^*,x[S^*])=(S^*,k^*)\notin \minimal(X)$, we have $x\notin \SOL(\minimal(X))$. By $x\in X$, this contradicts $X\subseteq \SOL(\minimal(X))$.
\end{proof}

Recall that a CSP~$\CSP$ is \emph{positively consistent}~\cite{5670022,10.1007/978-3-642-40627-0_15} (called `minimal' in~\cite{montanari1974networks,dechter2003constraint}) if and only if for each $(\scope,k)\in \CSP$ there exists $x\in \SOL(\CSP)$ such that $x[\scope]=k$, i.e., each allowed tuple is used by at least one solution, i.e., no tuple can be forbidden without losing some solutions. Proposition~\ref{pr:amin_alt} shows that $\minimal(X)$ is positively consistent for every $X\subseteq D^V$.

\begin{theorem}\label{th:galois}
For any $X\subseteq D^V$ and $\CSP\subseteq T$, we have 
\begin{equation}
\minimal(X)\subseteq \CSP \quad\iff\quad X \subseteq \SOL(\CSP). \label{eq:galois}
\end{equation}
\end{theorem}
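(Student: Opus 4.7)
The plan is to prove the two implications of the biconditional separately, both following almost immediately from results established earlier in the excerpt. This is essentially a Galois-connection statement between the maps $\minimal$ and $\SOL$, and the ingredients are all in place.

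For the direction $X \subseteq \SOL(A) \implies \minimal(X) \subseteq A$, I would simply unfold the definitions. If $X \subseteq \SOL(A)$, then by the definition of $\mathcal{A}^\uparrow(X)$ in~\eqref{eq:minCSP} we have $A \in \mathcal{A}^\uparrow(X)$. Since $\minimal(X) = \bigcap \mathcal{A}^\uparrow(X)$ is the intersection of all members of $\mathcal{A}^\uparrow(X)$, in particular $\minimal(X) \subseteq A$.

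For the reverse direction $\minimal(X) \subseteq A \implies X \subseteq \SOL(A)$, I would chain two facts noted earlier. First, by Proposition~\ref{pr:clo_intersection}, $\mathcal{A}^\uparrow(X)$ is closed under intersections, so $\minimal(X) \in \mathcal{A}^\uparrow(X)$, which by definition of $\mathcal{A}^\uparrow(X)$ gives $X \subseteq \SOL(\minimal(X))$. Second, by isotony of $\SOL$ (Footnote~\ref{foo:isotone}), the assumption $\minimal(X) \subseteq A$ implies $\SOL(\minimal(X)) \subseteq \SOL(A)$. Composing yields $X \subseteq \SOL(\minimal(X)) \subseteq \SOL(A)$.

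Since both directions reduce to single-line invocations of already-proved statements, I expect no real obstacle here; the only thing to be careful about is simply citing the right earlier result for each step (the definition of $\mathcal{A}^\uparrow(X)$, Proposition~\ref{pr:clo_intersection}, and the isotony remark in Footnote~\ref{foo:isotone}).
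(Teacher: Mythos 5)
Your proof is correct and follows essentially the same route as the paper's: the forward direction via $A\in\mathcal{A}^\uparrow(X)$ and the definition of $\minimal(X)$ as an intersection, and the reverse direction by combining $X\subseteq\SOL(\minimal(X))$ (from Proposition~\ref{pr:clo_intersection}) with isotony of $\SOL$. No issues.
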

\begin{proof}
If $\minimal(X) \subseteq \CSP$, then by isotony of $\SOL$ we have $\SOL(\minimal(X))\subseteq \SOL(\CSP)$. Since $X\subseteq \SOL(\minimal(X))$, we have $X \subseteq \SOL(\CSP)$.

If $X \subseteq \SOL(\CSP)$, i.e., $\CSP\in\mathcal{A}^\uparrow(X)$, then $\minimal(X)=\bigcap\mathcal{A}^\uparrow(X)\subseteq A$.
\end{proof}

Comparing~\eqref{eq:galois} with~\eqref{eq:minCSP} shows that the set $\mathcal{A}^\uparrow(X)$ is just an interval (w.r.t.\ the partial ordering by inclusion):
\begin{equation}
\mathcal{A}^\uparrow(X)
= \ourset{ A \mid \minimal(X) \subseteq A \subseteq T }
= [\minimal(X),T] .
\label{eq:Ainterval}
\end{equation}

Theorem~\ref{th:galois} further reveals that the maps $\minimal$ and $\SOL$ form a \emph{Galois connection}~\cite{davey2002introduction} between sets~$2^{(D^V)}$ and~$2^T$, partially ordered by inclusion (to our knowledge, we are the first to notice this).
Associated with the Galois connection are the closure operator $\SOL\circ\minimal$ and the dual closure operator\footnote{Recall that an operator is a closure (dual closure) if it is isotone, idempotent and increasing (decreasing), see, e.g.,~\cite[\S1.4]{blyth2005lattices}.} $\minimal\circ\SOL$. We have already seen their meaning:
\begin{itemize}
\item For any CSP $\CSP\subseteq T$, the CSP $\minimal(\SOL(\CSP))$ is the \emph{positive consistency closure}\footnote{The term \emph{consistency closure}, as used in constraint programming~\cite{Bessiere-CP-handbook}, is a dual closure in our notation because taking a consistency closure of a CSP \emph{deletes} some of its allowed tuples (it would be a closure if the CSP was defined by a set of \emph{forbidden} tuples).} of~$\CSP$, i.e., the smallest CSP with the same solution set as~$\CSP$.
A CSP~$\CSP$ is positively consistent if and only if $\minimal(\SOL(\CSP))=\CSP$.
\item For any set of assignments $X\subseteq D^V$, $\SOL(\minimal(X))$ is  the smallest (possibly non-strict) superset of~$X$ which is the solution set of some CSP $A\subseteq T$. We have $X=\SOL(\minimal(X))$ if and only if $X=\SOL(A)$ for some $A\subseteq T$.
\end{itemize}

\begin{remark}
Following~\cite[\S7.27]{davey2002introduction}, it is easy to see in this case that the maps~$\minimal$ and~$\SOL$ are mutually inverse bijections (even order-isomorphisms) if we restrict ourselves only to positively consistent CSPs, i.e., $\ourset{\CSP\subseteq T\mid \SOL(\minimal(\CSP))=\CSP}$ and sets of assignments representable as solution sets of some CSP $\CSP\subseteq T$, i.e., $\ourset{\SOL(\CSP)\mid \CSP\subseteq T}$.
\end{remark}

\subsection{Optimal Assignments from Optimal Super-Reparametrizations}\label{se:props_optimal}

Theorem~\ref{th:optimality_cond} says that the optimal value of~\eqref{eq:LP} coincides with the optimal value $\max_x \VAL gx$ of WCSP~$g$. We now focus on the optimal assignments (rather than optimal value) of WCSP~$g$. For brevity, we will denote the set of all optimal assignments of WCSP~$g$ as
\begin{equation}
\OPT(g)= \argmax_{x\in\domain^\vars}\VAL gx \subseteq D^V .
\end{equation}

\begin{theorem}\label{th:f_optimal}
If $f$~is optimal for~\eqref{eq:LP}, then\footnote{Statement~(c) in Theorem~\ref{th:optimality_cond} is equivalent to $\SOL(A^*(f))\cap W(f-g) \neq \emptyset$ where $W(d) = \ourset{x\in\domain^\vars\mid \VAL dx=0}$, i.e., satisfiability of CSP~$A^*(f)$ with an additional global constraint $x\in W(f-g)$. As a corollary of Theorem~\ref{th:f_optimal}, we have that $\SOL(A^*(f))\cap W(f-g) \neq \emptyset \implies \SOL(A^*(f))\cap W(f-g)=\OPT(g)$ for any super-reparametrization~$f$ of~$g$.}
$\OPT(g) \subseteq \OPT(f) = \SOL(A^*(f))$.
\end{theorem}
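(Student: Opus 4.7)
The plan is to prove the two parts of the conclusion separately: first the equality $\OPT(f)=\SOL(A^*(f))$, and then the inclusion $\OPT(g)\subseteq\OPT(f)$. Both parts will rest on Theorem~\ref{th:optimality_cond}(b), which tells us that an optimal $f$ for \eqref{eq:LP} satisfies $B(f)=\max_{x}\VAL fx=\max_{x}\VAL gx$, together with Theorem~\ref{th:ub}(b), which characterizes tightness of the bound via membership in $\SOL(A^*(f))$.

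For the equality $\OPT(f)=\SOL(A^*(f))$, I would reason as follows. By optimality of~$f$, Theorem~\ref{th:optimality_cond}(b) gives $\max_{x}\VAL fx=B(f)$. Hence for any assignment~$x$, membership $x\in\OPT(f)$ is equivalent to $\VAL fx=B(f)$, which by Theorem~\ref{th:ub}(b) is in turn equivalent to $x\in\SOL(A^*(f))$.

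For the inclusion $\OPT(g)\subseteq\OPT(f)$, let $x\in\OPT(g)$. Since $f\in g+M^*$, the super-reparametrization inequality~\eqref{eq:suprepar} gives $\VAL fx\geq\VAL gx$. On the other hand, $\VAL fx\leq\max_{y}\VAL fy$, and by Theorem~\ref{th:optimality_cond}(b) this maximum equals $\max_{y}\VAL gy=\VAL gx$ (using $x\in\OPT(g)$). Chaining these yields
\begin{equation*}
\VAL gx\;\leq\;\VAL fx\;\leq\;\max_{y}\VAL fy\;=\;\VAL gx,
\end{equation*}
so all three quantities coincide and in particular $\VAL fx=\max_{y}\VAL fy$, i.e., $x\in\OPT(f)$.

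There is no real obstacle here: the statement is essentially a bookkeeping consequence of Theorems~\ref{th:ub} and~\ref{th:optimality_cond}, and the only subtlety is noticing that super-reparametrization inequality combined with the equality of optimal values of~$f$ and~$g$ forces $\VAL fx=\VAL gx$ on every $g$-optimal assignment, which is the reason $\OPT(g)$ lands inside $\OPT(f)$.
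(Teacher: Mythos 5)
Your proof is correct and follows essentially the same route as the paper: the equality $\OPT(f)=\SOL(A^*(f))$ via Theorem~\ref{th:optimality_cond}(b) and Theorem~\ref{th:ub}(b), and the inclusion via sandwiching $\VAL fx$ between $\VAL gx$ and the common optimal value. The only cosmetic difference is that the paper runs the sandwich through $\UB(f)$ while you run it through $\max_y\VAL fy$, which coincide by Theorem~\ref{th:optimality_cond}(b).
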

\begin{proof}
To show $\OPT(g)\subseteq \OPT(f)$, let $x^*\in\OPT(g)$. By Theorem~\ref{th:optimality_cond}, $\VAL g{x^*} = \UB(f)$. Analogously to the proof of Theorem~\ref{th:optimality_cond}: since $\UB(f)\geq \VAL f{x^*}\geq \VAL g{x^*}$, we have that $\UB(f)= \VAL f{x^*}= \VAL g{x^*}$, thus $x^*$ is optimal for WCSP~$f$.

The equality $\OPT(f) = \SOL(A^*(f))$ follows from~$\UB(f)=\max_{x\in \domain^\vars}\VAL fx=\max_{x\in \domain^\vars}\VAL gx$ and Theorem~\ref{th:ub}.
\end{proof}

\begin{figure}
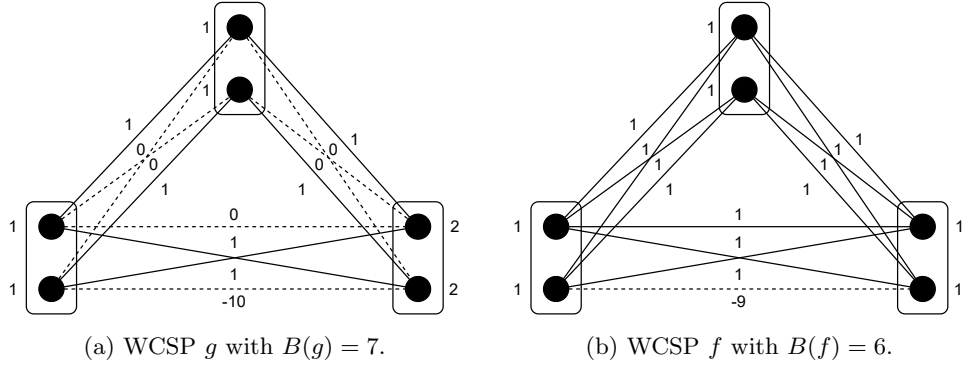

\centering
\begin{subfigure}[t]{0.4077\textwidth}
\centering
\includegraphics[width=\textwidth]{3cycle_non_symmetric}
\caption{WCSP~$g$ with~$\UB(g)=7$.}
\end{subfigure}
\begin{subfigure}[t]{0.4077\textwidth}
\centering
\includegraphics[width=\textwidth]{3cycle2_non_symmetric}
\caption{WCSP~$f$ with~$\UB(f)=6$.}
\end{subfigure}
\caption{WCSP~$f$ is an optimal super-reparametrization of WCSP~$g$. It is easy to verify that $A^*(f)=\minimal(\OPT(g))$ but $\OPT(f)=\SOL(A^*(f))\supsetneq \OPT(g)$.} 
\label{fig:3cycle}
\end{figure}

Our main goal in~\S\ref{se:theoretical_properties} is to characterize when the inclusion in Theorem~\ref{th:f_optimal} holds with equality, which is given by Theorem~\ref{th:existence_of_CSP} below.

\begin{proposition}\label{pr:A=A*}
For every $g\in\R^T$ and $A\subseteq T$ such that $\OPT(g)\subseteq\SOL(A)$, there exists $f\in\R^T$ optimal for~\eqref{eq:LP} such that $A=A^*(f)$.
\end{proposition}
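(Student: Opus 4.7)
The plan is to construct $f$ explicitly as an almost-constant perturbation. Let $m = \max_{x \in \domain^\vars}\VAL gx$. My first observation is that the hypothesis $\OPT(g)\subseteq\SOL(A)$ forces $A\cap \TPA{\scope}\neq\emptyset$ for every $\scope\in\constraints$: otherwise $\SOL(A)=\emptyset$, contradicting that $\OPT(g)$ is nonempty on the finite set $\domain^\vars$. This is the condition that will guarantee the right active set in the construction below.

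Given a parameter $\epsilon>0$ to be fixed later, define
\begin{equation*}
f_t = \begin{cases} m/|\constraints| & \text{if } t\in A,\\ m/|\constraints|-\epsilon & \text{if } t\in T-A.\end{cases}
\end{equation*}
Because $A\cap \TPA{\scope}\neq\emptyset$ for each $\scope$, within each block $\TPA{\scope}$ the maximum of $f$ equals $m/|\constraints|$ and is attained precisely on $A\cap \TPA{\scope}$. This gives both $A^*(f)=A$ and $\UB(f)=\sum_{\scope\in\constraints}(m/|\constraints|)=m$ in one stroke.

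The remaining task is to show $f\in g+M^*$ for $\epsilon$ small enough. A direct calculation using~\eqref{eq:energy'} gives
\begin{equation*}
\VAL fx = m - \epsilon\cdot n(x), \qquad n(x)=\ourabs{\ourset{\scope\in\constraints\mid (\scope,x[\scope])\in T-A}}.
\end{equation*}
For $x\in\SOL(A)$ we have $n(x)=0$ and $\VAL fx=m\ge\VAL gx$. For $x\notin\SOL(A)$ the hypothesis $\OPT(g)\subseteq\SOL(A)$ implies $x\notin\OPT(g)$, hence $\VAL gx<m$, and moreover $n(x)\ge 1$. Therefore the quantity $(m-\VAL gx)/n(x)$ is strictly positive for every such $x$, and since $\domain^\vars$ is finite we may take
\begin{equation*}
0<\epsilon\le\min\ourset{(m-\VAL gx)/n(x)\mid x\in\domain^\vars-\SOL(A)},
\end{equation*}
which ensures $\VAL fx\ge\VAL gx$ for all $x\notin\SOL(A)$ as well.

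Finally, optimality of $f$ for~\eqref{eq:LP} follows immediately: $\UB(f)=m=\max_x\VAL gx$ is condition~(b) of Theorem~\ref{th:optimality_cond}. The main subtlety was balancing two competing requirements—making the non-$A$ tuples strictly inactive (hence needing a strict drop $\epsilon>0$) while still keeping $f$ above $g$ pointwise; the key that makes this work is that the hypothesis precisely excludes the dangerous assignments (those achieving value $m$ under $g$) from $\domain^\vars-\SOL(A)$, leaving the minimum over the remaining $x$ strictly positive.
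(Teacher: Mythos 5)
Your proof is correct and follows essentially the same route as the paper: both construct an almost-constant $f$ equal to $m/|\constraints|$ on $A$ and slightly smaller off $A$, use $\OPT(g)\subseteq\SOL(A)$ to guarantee $A\cap\TPA{\scope}\neq\emptyset$ and to handle non-solutions of $A$, and conclude via Theorem~\ref{th:optimality_cond}(b). The only difference is cosmetic: the paper fixes the off-$A$ value to $F_2/|\constraints|$ (with $F_2$ the second-best objective value of $g$), which is one particular admissible choice of your $\epsilon$, namely $\epsilon=(F_1-F_2)/|\constraints|$.
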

\begin{proof}
Define the vector~$f$ as
\begin{equation}\label{eq:A=A*}
f_t =
\begin{cases}
F_1/|\constraints| & \text{if $t\in A$} \\
F_2/|\constraints| & \text{if $t\notin A$}
\end{cases}
\quad \forall t \in T
\end{equation}
where
\begin{equation}\label{eq:F12}
F_1 = \max_{x\in \domain^\vars} \VAL gx ,
\quad
F_2 = \max\ourset{\VAL gx \mid x\in \domain^\vars, \VAL gx < F_1}
\end{equation}
are the best and the second-best objective value of WCSP~$g$. Note, if $\OPT(g)=D^V$, then $F_2$~is undefined but it does not matter because it is never used in~\eqref{eq:A=A*}.

Since $\emptyset\neq\OPT(g)\subseteq\SOL(A)$, CSP~$\CSP$ is satisfiable. Therefore for each $\scope \in \constraints$ we have $\CSP\cap\TPA{\scope}\neq\emptyset$, hence
\begin{equation}\label{eq:max_tuple_value}
\max_{t\in \TPA{\scope}} f_t = F_1/|\constraints|.
\end{equation}
Equality $\CSP=A^*(f)$ now follows from~\eqref{eq:A=A*}.

To show that $f$ is feasible for~\eqref{eq:LP}, we distinguish two cases:
\begin{itemize}
\item If $x\in\OPT(g)$, i.e., $\VAL gx=F_1$, then $x\in\SOL(\CSP)=\SOL(\CSP^*(f))$. Therefore for all $\scope\in\constraints$ we have $(\scope,x[\scope])\in A^*(f)$, hence $f_\scope(x[\scope])=F_1/|\constraints|$ by~\eqref{eq:max_tuple_value}. Substituting into~\eqref{eq:energy'} yields $\VAL fx = F_1$. Hence $\VAL fx = F_1 = \VAL gx$.
\item If $x\notin\OPT(g)$, we have $f_t\geq F_2/|\constraints|$ for all $t\in T$, hence $\VAL fx\geq F_2$ by~\eqref{eq:energy'}. By~\eqref{eq:F12} we also have $\VAL gx\le F_2$. Hence $\VAL fx\ge F_2 \ge \VAL gx$.
\end{itemize}

To show that $f$~is optimal for~\eqref{eq:LP}, we use~\eqref{eq:max_tuple_value} to obtain
$\UB(f) = \sum_{\scope\in \constraints} F_1/|\constraints| = F_1 = \max_x\VAL gx$ and apply Theorem~\ref{th:optimality_cond}.
\end{proof}

\begin{theorem}\label{th:TLattice_char}
For every $g\in\R^T$, we have
\begin{equation}
\mathcal{A}^\uparrow(\OPT(g)) = \ourset{ A^*(f) \mid \text{$f$~is optimal for~\eqref{eq:LP}}} .
\label{eq:TLattice_char}
\end{equation}
\end{theorem}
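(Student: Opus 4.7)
The plan is to prove the two set inclusions separately, both of which follow directly from results already established just above the theorem statement, so the proof is essentially a two-line assembly.

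For the inclusion $\supseteq$, I would take an arbitrary $A$ of the form $A^*(f)$ where $f$ is optimal for~\eqref{eq:LP} and show that $\OPT(g) \subseteq \SOL(A^*(f))$, which is exactly what it means for $A^*(f)$ to lie in $\mathcal{A}^\uparrow(\OPT(g))$. But Theorem~\ref{th:f_optimal} already gives us the chain $\OPT(g) \subseteq \OPT(f) = \SOL(A^*(f))$, so this direction is immediate.

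For the inclusion $\subseteq$, I would take an arbitrary $A \in \mathcal{A}^\uparrow(\OPT(g))$, that is, an arbitrary $A \subseteq T$ satisfying $\OPT(g) \subseteq \SOL(A)$, and produce an $f$ optimal for~\eqref{eq:LP} with $A^*(f) = A$. This is precisely the content of Proposition~\ref{pr:A=A*}: given $g$ and any $A \subseteq T$ containing $\OPT(g)$ in its solution set, the explicit construction~\eqref{eq:A=A*} based on the two best objective values of~$g$ yields such an $f$.

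No serious obstacle is expected because both halves have already been handled by the preceding results; the only (minor) bookkeeping point is to verify that $\mathcal{A}^\uparrow(\OPT(g))$ is non-empty on both sides and that the equality $\OPT(f) = \SOL(A^*(f))$ used above is genuinely available, but the former is clear (e.g., $T \in \mathcal{A}^\uparrow(\OPT(g))$) and the latter is part of Theorem~\ref{th:f_optimal}. Thus the proof reduces to one forward citation of Theorem~\ref{th:f_optimal} and one forward citation of Proposition~\ref{pr:A=A*}.
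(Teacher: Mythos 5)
Your proposal is correct and matches the paper's own proof exactly: the paper also obtains the inclusion $\supseteq$ from Theorem~\ref{th:f_optimal} and the inclusion $\subseteq$ from Proposition~\ref{pr:A=A*}. Nothing further is needed.
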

\begin{proof}
The inclusion~$\supseteq$ says that for every optimal~$f$ we have $\OPT(g)\subseteq\SOL(A^*(f))$, which was proved in Theorem~\ref{th:f_optimal}. The inclusion~$\subseteq$ was proved in Proposition~\ref{pr:A=A*}.
\end{proof}

Now we combine the results of~\S\ref{se:minimal_csp} and~\S\ref{se:props_optimal} to obtain the main result of~\S\ref{se:theoretical_properties}. First observe that, by~\eqref{eq:Ainterval}, the set~\eqref{eq:TLattice_char} is just the interval $[\minimal(\OPT(g)),T]$.

\begin{theorem}\label{th:existence_of_CSP}
For every $g\in \R^T$, the following statements are equivalent:
\begin{enumerate}
\item[(a)] $\OPT(g)=\SOL(A)$ for some $A\subseteq T$,
\item[(b)] $\OPT(g) = \OPT(f)$ for some $f$ optimal for~\eqref{eq:LP}.
\end{enumerate}
If both statements are true, then statement~(a) holds, e.g., for $A=\minimal(\OPT(g))$ and statement~(b) holds, e.g., if $A^*(f)=\minimal(\OPT(g))$.
\end{theorem}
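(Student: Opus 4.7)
The plan is to prove the equivalence directly from the two main tools already established: Theorem~\ref{th:f_optimal}, which gives $\OPT(f)=\SOL(A^*(f))$ whenever $f$ is optimal for~\eqref{eq:LP}, and Proposition~\ref{pr:A=A*}, which conversely constructs, for any $A$ with $\OPT(g)\subseteq\SOL(A)$, an optimal~$f$ satisfying $A^*(f)=A$. The Galois-connection discussion in~\S\ref{se:minimal_csp} will then be used to identify the canonical witness $A=\minimal(\OPT(g))$.

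For the implication (b)$\Rightarrow$(a) I would simply take the optimal~$f$ provided by~(b) and apply Theorem~\ref{th:f_optimal}: it gives $\OPT(g)=\OPT(f)=\SOL(A^*(f))$, so setting $A=A^*(f)$ yields (a). For (a)$\Rightarrow$(b), assume $\OPT(g)=\SOL(A_0)$ for some $A_0\subseteq T$. Since $\OPT(g)\subseteq\SOL(A_0)$, Proposition~\ref{pr:A=A*} supplies an optimal~$f$ with $A^*(f)=A_0$. Theorem~\ref{th:f_optimal} then gives $\OPT(f)=\SOL(A^*(f))=\SOL(A_0)=\OPT(g)$, establishing (b).

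For the final clause about specific choices, I would argue as follows. Recall from~\S\ref{se:minimal_csp} that for any $X\subseteq D^V$ one has $X\subseteq\SOL(\minimal(X))$ always, with equality iff $X=\SOL(A)$ for some $A\subseteq T$. Hence, assuming (a), the set $\OPT(g)$ itself satisfies $\OPT(g)=\SOL(\minimal(\OPT(g)))$, so (a) holds with $A=\minimal(\OPT(g))$. To realize the same CSP as the active-tuple set of an optimal~$f$, apply Proposition~\ref{pr:A=A*} to $A=\minimal(\OPT(g))$ (the hypothesis $\OPT(g)\subseteq\SOL(\minimal(\OPT(g)))$ is automatic); the resulting $f$ is optimal for~\eqref{eq:LP} with $A^*(f)=\minimal(\OPT(g))$, and then $\OPT(f)=\SOL(A^*(f))=\SOL(\minimal(\OPT(g)))=\OPT(g)$.

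There is essentially no substantive obstacle: the work was already done in Theorem~\ref{th:f_optimal}, Proposition~\ref{pr:A=A*}, and the Galois-connection observations. The only subtle point worth stating explicitly is the asymmetry between $\OPT(g)\subseteq\SOL(\minimal(\OPT(g)))$ (which always holds) and the equality $\OPT(g)=\SOL(\minimal(\OPT(g)))$ (which is exactly condition~(a)); conflating these would mistakenly suggest that (a) always holds, which it does not in general.
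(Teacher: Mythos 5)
Your proof is correct and follows essentially the same route as the paper: both directions rest on Theorem~\ref{th:f_optimal} and Proposition~\ref{pr:A=A*} (the paper invokes them jointly via Theorem~\ref{th:TLattice_char}), and the ``e.g.'' clause is obtained from the Galois-connection fact that $\OPT(g)=\SOL(\minimal(\OPT(g)))$ is equivalent to condition~(a). If anything, your handling of (b)$\Rightarrow$(a) for an \emph{arbitrary} optimal~$f$ is stated slightly more carefully than the paper's, which phrases that direction only for the particular~$f$ with $A^*(f)=\minimal(\OPT(g))$.
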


\begin{proof}
Let $g\in\R^T$. By Theorem~\ref{th:TLattice_char}, there exists~$f$ optimal for~\eqref{eq:LP} satisfying $\CSP^*(f)=\minimal(\OPT(g))$. By Theorem~\ref{th:f_optimal}, this~$f$ satisfies $\OPT(f)=\SOL(A^*(f))$.

By the results of~\S\ref{se:minimal_csp}, statement~(a) is equivalent to $\OPT(g)=\SOL(\minimal(\OPT(g)))$.
Therefore, if (a)~holds, then (b)~holds for the above~$f$. In the other direction, if (b)~holds for the above~$f$, then (a)~holds.
\end{proof}

Theorem~\ref{th:existence_of_CSP} shows that the inclusion in Theorem~\ref{th:f_optimal} holds with equality for some optimal~$f$ if and only if the set $\OPT(g)$ of optimal assignments of WCSP~$g$ is representable as a solution set of some CSP with the same structure. If no such CSP exists, then $\OPT(g)\subsetneq\OPT(f)$ for all optimal~$f$. An example of WCSP~$g$ for which no such CSP exists is in Figure~\ref{fig:3cycle}.

It is natural to ask which WCSPs possess this property. 
Though we are currently unable to provide a full characterization of such WCSPs, we identify two such classes:

\begin{theorem}[\cite{schlesinger1976,Werner-PAMI07}]\label{th:case_csp_exists}
If the LP relaxation~\eqref{eq:LPrelax} of a WCSP $g\in\R^T$ is tight, then $\OPT(g)=\SOL(A)$ for some $A\subseteq T$.
\end{theorem}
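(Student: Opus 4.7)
The plan is to use the witness reparametrization provided by LP tightness and show that the active-tuple CSP of this reparametrization has precisely $\OPT(g)$ as its solution set.

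Concretely, tightness of~\eqref{eq:LPrelax} means that there exists a reparametrization $f\in g+M^\perp$ with $\UB(f)=\max_{x\in D^V}\VAL{g}{x}$. First I would exploit that, by definition of reparametrization~\eqref{eq:repar}, $\VAL{f}{x}=\VAL{g}{x}$ for every $x\in D^V$; in particular $\OPT(f)=\OPT(g)$ and $\max_x\VAL{f}{x}=\max_x\VAL{g}{x}=\UB(f)$.

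Next, I would apply Theorem~\ref{th:ub} to the WCSP $f$: part~(a) gives $\UB(f)\ge \VAL{f}{x}$ for every $x$, with equality (by part~(b)) exactly when $x\in\SOL(A^*(f))$. Combined with $\UB(f)=\max_x\VAL{f}{x}$, this yields the equality
\begin{equation*}
\OPT(f)=\SOL(A^*(f)).
\end{equation*}
Together with $\OPT(f)=\OPT(g)$ from the previous step, choosing $A:=A^*(f)$ gives $\OPT(g)=\SOL(A)$, as desired.

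There is no real obstacle here: the argument is essentially a one-line chase through the definitions of reparametrization, tightness, and the characterization of active-tuple equality in Theorem~\ref{th:ub}(b). The only thing to be careful about is the direction of the two equalities $\OPT(f)=\OPT(g)$ and $\OPT(f)=\SOL(A^*(f))$, which both rely on having $\UB(f)$ attain $\max_x\VAL{f}{x}$; but this attainment is precisely what tightness of the LP relaxation supplies.
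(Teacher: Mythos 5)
Your proposal is correct and follows essentially the same route as the paper: take the optimal reparametrization $f$ supplied by tightness, observe $\OPT(f)=\OPT(g)$, and identify $\OPT(f)$ with $\SOL(A^*(f))$. The only cosmetic difference is that the paper obtains the last equality by noting $f$ is also optimal for~\eqref{eq:LP} and invoking Theorem~\ref{th:f_optimal}, whereas you derive it directly from Theorem~\ref{th:ub}; both are valid and rest on the same fact that $\UB(f)$ attains $\max_x\VAL fx$.
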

\begin{proof}
If the LP relaxation~\eqref{eq:LPrelax} is tight, then there exists a vector $f \in \R^T$ such that $\UB(f)=\max_{x\in \domain^\vars}\VAL gx$ and $f$ is a reparametrization of $g$, i.e., $\VAL fx=\VAL gx$ for all $x\in \domain^\vars$, thus, $f$ is also optimal for~\eqref{eq:LP}. It follows that the sets of optimal assignments for~$f$ and~$g$ coincide. By Theorem~\ref{th:f_optimal}, $A^*(f)$~is the required CSP.
\end{proof}

\begin{theorem}
If a WCSP $g\in\R^T$ has a unique optimal assignment (i.e., $\ourabs{\OPT(g)}=1$), then $\OPT(g)=\SOL(A)$ for some $A\subseteq T$.
\end{theorem}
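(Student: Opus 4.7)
The plan is to use the candidate CSP $A := \minimal(\OPT(g))$ suggested by the second half of Theorem~\ref{th:existence_of_CSP}. Writing $\OPT(g) = \{x^*\}$, Proposition~\ref{pr:amin_alt} gives the explicit description
\begin{equation*}
A = \ourset{(S, x^*[S]) \mid S \in \constraints},
\end{equation*}
so in particular each scope $S \in \constraints$ has exactly one allowed tuple under~$A$. The goal is then to verify $\SOL(A) = \{x^*\}$.

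The inclusion $\{x^*\} \subseteq \SOL(A)$ is immediate: by construction, $x^*$ uses the tuple $(S, x^*[S]) \in A$ in each scope $S \in \constraints$ (this is also a special case of the general fact $X \subseteq \SOL(\minimal(X))$ from~\S\ref{se:minimal_csp}). For the reverse inclusion, I would take an arbitrary $x \in \SOL(A)$ and observe that the defining condition $(S, x[S]) \in A$ for every $S \in \constraints$ forces $x[S] = x^*[S]$ for every $S \in \constraints$. The WCSP objective~\eqref{eq:energy} depends on the assignment only through the restrictions $x[S]$, $S \in \constraints$, so this yields $\VAL gx = \VAL g{x^*}$, hence $x \in \OPT(g) = \{x^*\}$, and therefore $x = x^*$.

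I do not anticipate any real obstacle; the argument is elementary and only invokes the explicit form of $\minimal(\cdot)$ together with the uniqueness hypothesis. The one mildly subtle point is that agreement on all $x[S]$, $S \in \constraints$, need not formally imply $x = x^*$ as vectors in $\domain^\vars$ (if some variable $i \in \vars$ belonged to no scope, the coordinate $x_i$ would be unconstrained). However, the step that passes through the objective value sidesteps this issue entirely: equality of objective values combined with uniqueness of the optimum forces $x = x^*$, and in fact it shows that under the uniqueness hypothesis no such ``free'' variable can exist unless $|\domain| = 1$.
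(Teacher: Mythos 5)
Your proposal is correct and follows essentially the same route as the paper: both take $A=\ourset{(\scope,x^*[\scope])\mid\scope\in\constraints}$ (you obtain it as $\minimal(\OPT(g))$ via Proposition~\ref{pr:amin_alt}, the paper writes it down directly) and both conclude that any $x\in\SOL(A)$ agrees with $x^*$ on every scope, hence has the same objective value, hence equals $x^*$ by uniqueness. Your remark about a variable lying in no scope is a valid point that the objective-value step indeed sidesteps; the paper handles the related degenerate situation only by splitting off the trivial case $|\domain|=1$.
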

\begin{proof}
The case with~$|\domain|=1$ is trivial, so let $|\domain|\geq2$ and $\OPT(g)=\{x\}$.

We claim that~$A=\ourset{(\scope,x[\scope])\mid \scope\in\constraints}$ is the required CSP, i.e., $\SOL(A)=\{x\}$. For contradiction, suppose that $x'\in\SOL(\CSP)$ and $x'\neq x$. By~$x'\in\SOL(\CSP)$, we necessarily have that~$x'[\scope]=x[\scope]$ for all~$\scope\in\constraints$. By definition~\eqref{eq:energy'}, this implies $\VAL g{x'}=\VAL gx$. Thus, $\{x',x\}\subseteq\OPT(g)$, which is contradictory with~${\ourabs{\OPT(g)}=1}$.
\end{proof}

\subsection{Properties of General Super-Reparametrizations}\label{se:arbitrary_suprepar}

Finally, we present one property of general super-reparametrizations~$f$ of a fixed WCSP $g\in\R^T$, i.e., $f$~is only feasible (but possibly not optimal) for~\eqref{eq:LP}. 

\begin{theorem}\label{th:t_non_opt}
For every $g\in \R^T$ we have
\begin{equation}
\ourset{A^*(f) \mid \text{$f\in\R^T$ is a super-reparametrization of~$g$}}
= \ourset{A^*(f)\mid f\in \mathbb{R}^T}.
\label{eq:t_non_opt}
\end{equation}
\end{theorem}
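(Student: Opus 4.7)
The inclusion $\subseteq$ is immediate: every super-reparametrization of~$g$ is by definition an element of~$\R^T$, so the right-hand set contains the left-hand one. The interesting direction is $\supseteq$: given an arbitrary $f\in\R^T$, I need to exhibit some super-reparametrization~$f'$ of~$g$ satisfying $A^*(f')=A^*(f)$.

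My plan is to construct~$f'$ explicitly from the CSP $A:=A^*(f)$, without reference to the actual weights of~$f$. The key property of~$A$ that I will use is that for every scope $S\in\constraints$ we have $A\cap\TPA{S}\neq\emptyset$, since the maximum in the definition~\eqref{eq:active} of activeness is always attained by at least one tuple of~$\TPA{S}$. Given this, I will choose two constants $L_{\mathrm{hi}}>L_{\mathrm{lo}}$ and define
\begin{equation*}
f'_t=\begin{cases}L_{\mathrm{hi}}&\text{if }t\in A,\\ L_{\mathrm{lo}}&\text{if }t\notin A.\end{cases}
\end{equation*}
Because $A\cap\TPA{S}\neq\emptyset$ for every $S\in\constraints$, the maximum of $f'$ over each~$\TPA{S}$ equals $L_{\mathrm{hi}}$ and is attained precisely at the tuples in $A\cap\TPA{S}$, which gives $A^*(f')=A$ regardless of the choice of $L_{\mathrm{hi}}>L_{\mathrm{lo}}$.

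It remains to pick $L_{\mathrm{lo}}$ so that $f'-g\in M^*$, i.e., $\<f',\phi(x)\>\ge\<g,\phi(x)\>$ for every $x\in\domain^\vars$. Since each assignment~$x$ uses exactly one tuple from each~$\TPA{S}$ and $f'_t\ge L_{\mathrm{lo}}$ for all $t\in T$, we have $\<f',\phi(x)\>\ge|\constraints|\,L_{\mathrm{lo}}$. Hence it suffices to choose $L_{\mathrm{lo}}\ge\tfrac{1}{|\constraints|}\max_{x\in\domain^\vars}\<g,\phi(x)\>$ (and any $L_{\mathrm{hi}}>L_{\mathrm{lo}}$), which is possible because $\domain^\vars$ is finite. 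With this choice, $f'$ is simultaneously a super-reparametrization of~$g$ and satisfies $A^*(f')=A=A^*(f)$, yielding the desired inclusion.

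I do not anticipate any real obstacle: the proof is a one-line construction once one notices that $A^*(f)$ depends only on the relative ordering of weights inside each~$\TPA{S}$, so a uniform two-level rescaling ($L_{\mathrm{hi}}$ on $A$, $L_{\mathrm{lo}}$ outside) can simultaneously preserve the active-tuple CSP and drive every objective value above that of~$g$. The only point that requires a moment of care is the nonemptiness of $A\cap\TPA{S}$ for each~$S$, which is what guarantees that the constructed $A^*(f')$ really equals $A$ rather than some strict superset.
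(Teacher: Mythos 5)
Your proposal is correct and follows essentially the same route as the paper: the paper's proof also uses a two-level construction, setting $f'_t = \UB(g)/|\constraints| + \iverson{t\in A^*(f)}$, which is exactly your $L_{\mathrm{lo}} = \UB(g)/|\constraints|$ and $L_{\mathrm{hi}} = L_{\mathrm{lo}}+1$ (the paper takes the bound $\UB(g)$ where you take $\max_x\VAL gx$, but both choices make $|\constraints|\,L_{\mathrm{lo}}$ dominate every $\VAL gx$). The supporting observations — that $A^*(f)\cap\TPA{\scope}\neq\emptyset$ for each scope and that each assignment uses exactly one tuple per scope — are likewise the ones the paper relies on.
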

\begin{proof}
The inclusion $\subseteq$ is trivial. To prove $\supseteq$, let~$f'\in\mathbb{R}^T$ be arbitrary. Define~$f\in\R^T$ as $f_t = B(g)/|\constraints|+\iverson{t\in A^*(f')}$, $t\in T$. Clearly, $f$ is a super-reparametrization of~$g$ due to $\VAL fx \geq B(g)\geq \VAL gx$ for any~$x\in\domain^\vars$. In addition, by definition of~$f$, $\max_{t\in \TPA{\scope}}f_t = B(g)/|\constraints|+1$ for any $\scope\in\constraints$, hence $A^*(f')=A^*(f)$.
\end{proof}

Theorem~\ref{th:t_non_opt} shows that the left-hand set in~\eqref{eq:t_non_opt} does not depend on~$g$ at all. Therefore, if we approximately optimize~\eqref{eq:LP}, i.e., we find a (possibly non-optimal) super-reparametrization~$f$ of~$g$, then there is in general no relation between sets $\OPT(f)$ and $\OPT(g)$. However, as shown in~\S\ref{sec:super-reparam}, an arbitrary super-reparametrization still maintains the valuable property that it provides an upper bound~$B(f)$ on the optimal value $\max_x\VAL gx$ of WCSP~$g$.

\section{Hardness Remarks}\label{se:hardness}

Unsurprisingly, a number of decision and optimization problems related to~\eqref{eq:LP} is computationally hard since the optimization problem~\eqref{eq:LP} is hard itself. We overview a number of such problems here.

\begin{theorem}\label{th:NPh_superrepar}
The following problem is NP-complete: Given~$f,g\in\mathbb{Q}^T$, decide whether $f$~is \emph{not\/} a super-reparametrization of~$g$ (i.e., whether $f$~is \emph{not\/} feasible for~\eqref{eq:LP}).
\end{theorem}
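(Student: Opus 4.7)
The plan is to verify NP-membership by exhibiting a compact witness and to establish NP-hardness by a direct reduction from classical CSP satisfiability, which is well-known to be NP-complete over the fixed structure $(\domain,\vars,\constraints)$.

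For membership, I would note that by~\eqref{eq:suprepar} the statement ``$f$ is not a super-reparametrization of $g$'' is witnessed by any assignment $x\in\domain^\vars$ satisfying $\VAL{f-g}{x}<0$. Such an $x$ has polynomial size ($|\vars|\lceil\log_2|\domain|\rceil$ bits) and $\VAL{f-g}{x}$ is computable in polynomial time as a sum of $|\constraints|$ input rationals, which yields the required polynomial-time verifier.

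For hardness, given a CSP instance $\CSP\subseteq T$ I would construct in polynomial time the WCSP $g\in\mathbb{Q}^T$ by setting $g_t:=1$ if $t\in\CSP$ and $g_t:=-|\constraints|$ otherwise, and take $f:=0$. Then, by the definition of super-reparametrization, $f$ fails to be a super-reparametrization of $g$ iff there is $x\in\domain^\vars$ with $\VAL gx>0$. Any $x\in\SOL(\CSP)$ uses only allowed tuples and hence yields $\VAL gx=|\constraints|>0$; any $x\notin\SOL(\CSP)$ violates at least one scope and thus gives $\VAL gx\le(|\constraints|-1)-|\constraints|=-1<0$. Hence the constructed instance is a yes-instance for the complement problem precisely when $\CSP$ is satisfiable, completing the reduction.

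I do not foresee a serious obstacle. The only delicate point is calibrating the weights so that solutions of $\CSP$ produce a strictly positive WCSP objective while all other assignments stay strictly negative; the gap between $+1$ and $-|\constraints|$ ensures this with slack at least~$1$. The same construction can be recycled for the upcoming Corollary~\ref{co:NPh_cone}, since it simultaneously shows NP-hardness of deciding $d\notin M^*$ already for $d=g-f=g$.
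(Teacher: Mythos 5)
Your proof is correct and follows essentially the same strategy as the paper: a reduction from CSP satisfiability in which the weights are calibrated so that exactly the solutions of $\CSP$ push the objective of~$g$ strictly above that of~$f$ (the paper uses $g\in\{0,1\}^T$ with $f$ constant at $(|\constraints|-1)/|\constraints|$, you use $g\in\{1,-|\constraints|\}^T$ with $f=0$; both work). One caveat on your closing side remark: since super-reparametrization means $f-g\in M^*$, the instance for Corollary~\ref{co:NPh_cone} must be $d=f-g=-g$, not $d=g-f=g$; with your construction $g\notin M^*$ holds iff $\CSP\neq T$, which is trivially decidable, so the sign matters there even though it does not affect the theorem itself.
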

\begin{proof}
Membership in NP can be shown easily by the notion of a non-deterministic algorithm~\cite[\S10]{alsuwaiyel1999algorithms}. First, one can choose any~$x\in\domain^\vars$ and then in polynomial time decide whether~$\VAL fx<\VAL gx$. 

To show NP-hardness, we perform a reduction from CSP satisfiability which is known to be NP-complete. Let $A\subseteq T$~be a CSP. We would like to decide whether $\SOL(A)\neq\emptyset$.

Let us define~$g\in\{0,1\}^T$ by
\begin{equation}
    g_\scope(k) = \iverson{(\scope,k)\in A} \quad \forall (\scope,k)\in T.
\end{equation}
Thus, for any~$x\in\domain^\vars$, $\VAL gx$ equals to the number of constraints in CSP~$A$ that are satisfied by the assignment $x$. So, $\VAL gx\in\{0, 1, \ldots, |\constraints|\}$ and $\VAL gx=|\constraints|$ if and only if $x\in\SOL(A)$. Consequently, $\max_x \VAL gx\leq|\constraints|-1$ if and only if $\SOL(A)=\emptyset$.

We define~$f\in\mathbb{Q}^T$ by $f_t = (|\constraints|-1)/|\constraints|$, $t\in T$. In analogy to Theorem~\ref{th:optimality_cond}, $\VAL fx=|\constraints|-1$ for all assignments~$x\in\domain^\vars$. Hence, $\SOL(A)=\emptyset$ if and only if $f$~is a super-reparametrization of~$g$.
\end{proof}

\begin{corollary}\label{co:NPh_cone}
The following problem is NP-complete: Given~$d\in\mathbb{Q}^T$, decide whether $d\notin M^*$.
\end{corollary}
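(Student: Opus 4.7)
The plan is to deduce this corollary directly from Theorem~\ref{th:NPh_superrepar} by a trivial polynomial-time reduction, together with an easy NP-membership argument.

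For membership in NP, I would use the equivalent characterization: by definition~\eqref{eq:M*}, $d\notin M^*$ if and only if there exists $x\in\domain^\vars$ with $\VAL dx<0$. So a non-deterministic algorithm guesses an assignment $x\in\domain^\vars$ (a polynomial-size certificate) and evaluates $\VAL dx=\sum_{\scope\in\constraints}d_\scope(x[\scope])$ in polynomial time, accepting iff this quantity is negative.

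For NP-hardness, I would reduce from the problem shown NP-hard in Theorem~\ref{th:NPh_superrepar}: deciding whether $f$ is \emph{not} a super-reparametrization of a given $g$. The key observation is that by the definition of super-reparametrization~\eqref{eq:suprepar}, $f$ is not a super-reparametrization of $g$ exactly when $\VAL fx<\VAL gx$ for some $x\in\domain^\vars$, which by linearity is equivalent to $\VAL{f-g}x<0$ for some $x$, i.e., $f-g\notin M^*$. Therefore, given an instance $(f,g)\in\mathbb{Q}^T\times\mathbb{Q}^T$ of the problem of Theorem~\ref{th:NPh_superrepar}, we output the instance $d:=f-g\in\mathbb{Q}^T$ of our problem; this reduction is clearly computable in polynomial time and is correct by the above equivalence.

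I do not anticipate any obstacle here: the corollary is essentially a restatement of Theorem~\ref{th:NPh_superrepar} with $d=f-g$, and no non-trivial step is required beyond noting this translation and verifying NP-membership.
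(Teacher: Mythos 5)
Your proposal is correct and matches the paper's own proof: the paper likewise establishes membership in NP by guessing an assignment $x$ and checking $\VAL dx<0$, and obtains hardness by reducing the problem of Theorem~\ref{th:NPh_superrepar} via the substitution $d=f-g$. No further comment is needed.
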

\begin{proof}
Membership in NP is analogous to Theorem~\ref{th:NPh_superrepar}. The question of whether $f$~is not a super-reparametrization of~$g$ from Theorem~\ref{th:NPh_superrepar} reduces to whether $d=f-g\notin M^*$.
\end{proof}

\begin{corollary}\label{co:NPh_optimal}
The following problem is NP-complete: Given~$f,g\in\{0,1\}^T$ where $f$~is a super-reparametrization of $g$, decide whether $f$~is optimal for~\eqref{eq:LP}.
\end{corollary}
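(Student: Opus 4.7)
The plan is to establish membership in NP via Theorem~\ref{th:optimality_cond}(c), and NP-hardness via a direct reduction from CSP satisfiability that reuses the construction from the proof of Theorem~\ref{th:NPh_superrepar}.

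For membership in NP, I would invoke Theorem~\ref{th:optimality_cond}(c): $f$ is optimal for~\eqref{eq:LP} if and only if there exists an assignment $x\in\domain^\vars$ with $x\in\SOL(A^*(f))$ and $\VAL fx=\VAL gx$. A non-deterministic algorithm can guess such~$x$ and verify both conditions in polynomial time (computing $A^*(f)$ from~\eqref{eq:active} and evaluating $\VAL fx$ and $\VAL gx$ from~\eqref{eq:energy'} is polynomial in~$|T|$).

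For NP-hardness, I would reduce from CSP satisfiability. Given a CSP $A\subseteq T$, define $g\in\{0,1\}^T$ by $g_\scope(k)=\iverson{(\scope,k)\in A}$, so $\VAL gx$ counts the number of scopes $\scope\in\constraints$ for which $(\scope,x[\scope])\in A$; in particular $\VAL gx\le|\constraints|$, with equality if and only if $x\in\SOL(A)$. Define $f\in\{0,1\}^T$ by $f_t=1$ for every $t\in T$. Then $\VAL fx=|\constraints|$ for every $x\in\domain^\vars$, so $\VAL fx\ge\VAL gx$ holds for all $x$, verifying that $f$ is a super-reparametrization of $g$ (i.e., $f$ is feasible for~\eqref{eq:LP}). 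Moreover, $B(f)=\sum_{\scope\in\constraints}\max_{t\in\TPA{\scope}}f_t=|\constraints|$. By the equivalence (a)$\Leftrightarrow$(b) of Theorem~\ref{th:optimality_cond}, $f$ is optimal for~\eqref{eq:LP} if and only if $B(f)=\max_{x}\VAL gx$, i.e., if and only if $\max_x\VAL gx=|\constraints|$, which holds precisely when $A$ is satisfiable.

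Since the construction of~$f$ and~$g$ from~$A$ is clearly polynomial in $|T|$, this establishes NP-hardness. There is no real obstacle here: the only subtlety is to ensure both $f,g\in\{0,1\}^T$ as required by the statement, which is automatic in the construction above. The proof essentially recycles the idea of Theorem~\ref{th:NPh_superrepar}, but instead of asking whether a candidate~$f$ fails to be a super-reparametrization, we take the trivially feasible all-ones~$f$ and use Theorem~\ref{th:optimality_cond} to translate optimality of~$f$ into satisfiability of~$A$.
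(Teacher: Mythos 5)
Your proof is correct and follows essentially the same route as the paper: membership in NP via the certificate from Theorem~\ref{th:optimality_cond}(c), and hardness by reusing the indicator-weight construction of Theorem~\ref{th:NPh_superrepar} with the all-ones vector~$f$, so that optimality of~$f$ is equivalent to satisfiability of the given CSP. The only cosmetic difference is that you invoke the equivalence (a)$\Leftrightarrow$(b) explicitly where the paper argues directly that $\UB(f)=|\constraints|=\max_x\VAL gx$ iff $\SOL(\CSP)\neq\emptyset$; the content is identical.
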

\begin{proof}
Membership in NP follows from Theorem~\ref{th:optimality_cond}: as in Theorem~\ref{th:NPh_superrepar}, one can choose~$x\in\domain^\vars$ and then in polynomial time decide whether~$\VAL fx=\VAL gx$ and~$x\in\SOL(A^*(f))$.

The hardness part is completely analogous to the proof of Theorem~\ref{th:NPh_superrepar} except that we define~$f\in\{0,1\}^T$ by $f_t = 1$, $t\in T$, so $\UB(f)=|\constraints|$. Clearly, $f$~is element-wise greater than or equal to~$g$, so it is a super-reparametrization. Moreover, $|\constraints|=\max_x \VAL gx$ if and only if $\SOL(A)\neq\emptyset$, so $f$~is optimal for~\eqref{eq:LP} if and only if $\SOL(A)\neq\emptyset$.
\end{proof}

Recall that in formula~\eqref{eq:deact_exists}, the number $\delta$~had the concrete value given by Theorem~\ref{th:deact_exists}. However, sometimes the value of~$\delta$ can be decreased while \eqref{eq:deact_exists}~still remains to be an $R$-deactivating direction for~$\CSP$. Finding a small such~$\delta$ is desirable because then \eqref{eq:deact_exists} results in smaller objective values $\VAL dx$, as explained in~\S\ref{se:properties_of_the_method}. Unfortunately, finding the least value of~$\delta$ is likely intractable:

\begin{theorem}\label{th:NPh_deac}
The following problem is NP-complete: Given~$\delta \in \mathbb{Q}$ and $\deactivating\subseteq \CSP \subseteq T$ satisfying $\SOL(\CSP)=\SOL(\CSP-\deactivating)$, decide whether vector~$d$ given by~\eqref{eq:deact_exists} is \emph{not\/} an $\deactivating$-deactivating direction for~$\CSP$.
\end{theorem}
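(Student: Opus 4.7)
The plan splits into membership and hardness. For membership, the calculation in the proof of Theorem~\ref{th:deact_exists} already shows that $\VAL dx = -n_1(x) + \delta\, n_2(x)$, where $n_1(x)$ and $n_2(x)$ count the scopes $\scope\in\constraints$ with $(\scope, x[\scope])\in \deactivating$ and with $(\scope, x[\scope])\in T-\CSP$, respectively. Since conditions~(a) and~(b) of Definition~\ref{de:t-deactivating} are automatically forced by the form of~\eqref{eq:deact_exists}, $d$ fails to be $\deactivating$-deactivating iff $d\notin M^*$, iff there exists $x \in \domain^\vars$ with $n_1(x) > \delta\, n_2(x)$; an NP machine can guess such an~$x$ and verify this strict inequality in polynomial time.

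For hardness, I would reduce from 3-SAT. Given a 3-CNF formula $\phi$ over variables $y_1, \ldots, y_n$ with clauses $c_1, \ldots, c_m$ (WLOG $m \ge 1$), take $\vars := \{y_1, \ldots, y_n, z\}$ with $z$ fresh, $\domain := \{0,1\}$, and $\constraints := \{S_0, S_1, \ldots, S_m\}$, where $S_0 := \{z\}$ and $S_j$ is the scope of the variables of $c_j$. Set $\CSP\cap\TPA{S_0} := \emptyset$; for each $j\ge 1$, place the seven satisfying tuples of $c_j$ into $\CSP$ and the unique falsifying tuple into $T-\CSP$. Put $\deactivating := \CSP$ (so $\CSP - \deactivating = \emptyset$) and $\delta := (m-1)/2$. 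Since $\TPA{S_0}\cap \CSP = \emptyset$, the CSP $\CSP$ is unsatisfiable, so $\SOL(\CSP) = \emptyset = \SOL(\CSP-\deactivating)$ and the precondition of the problem is satisfied; the construction is clearly polynomial in $\phi$. For any assignment $x = (y, z)$, scope $S_0$ contributes $1$ to $n_2(x)$ and $0$ to $n_1(x)$, while each $S_j$ with $j\ge 1$ contributes $1$ to $n_1(x)$ if $y$ satisfies $c_j$ and $1$ to $n_2(x)$ otherwise. Letting $s(y)$ denote the number of clauses satisfied by $y$, we obtain $n_1(x) = s(y)$ and $n_2(x) = 1 + m - s(y)$, so the inequality $n_1(x) > \delta\, n_2(x)$ simplifies to $(m+1)\,s(y) > (m+1)(m-1)$, i.e., $s(y) = m$. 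Hence $d$ is not $\deactivating$-deactivating iff $\phi$ is satisfiable.

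The main obstacle is the precondition $\SOL(\CSP) = \SOL(\CSP-\deactivating)$: a naive encoding that uses only the clause scopes and sets $\deactivating = \CSP$ would force $\SOL(\CSP-\deactivating) = \emptyset$ while leaving $\SOL(\CSP)$ equal to the satisfying assignments of $\phi$, violating the precondition on exactly the instances we want to label ``yes''. The decoy unary scope $S_0$ resolves this by making $\CSP$ unsatisfiable by construction, while its contribution of $+1$ to $n_2(x)$ is just a constant shift that is absorbed by tuning~$\delta$.
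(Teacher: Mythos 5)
Your NP-membership argument is the same as the paper's. Your hardness reduction takes a genuinely different route (3-SAT instead of 3-coloring), but it is structurally parallel: the paper glues a $K_4$ onto the input graph so that $\CSP$ is unsatisfiable by construction and the precondition $\SOL(\CSP)=\SOL(\CSP-\deactivating)$ holds trivially with $\deactivating=\CSP$, then tunes $\delta=(|\constraints|-2)/2$ so that $\VAL dx<0$ exactly on witnesses of the original problem; your empty unary scope $S_0$ plays precisely the role of the $K_4$, and $\delta=(m-1)/2$ plays the role of the paper's threshold. The arithmetic you carry out is correct \emph{given} the identity $n_1(x)=s(y)$.

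That identity, however, hides a genuine gap. Since $\constraints\subseteq 2^\vars$ is a \emph{set} of scopes, two distinct clauses over the same three variables collapse to a single scope $S$, and your prescription ``place the seven satisfying tuples of $c_j$ into $\CSP$'' then yields $\CSP\cap\TPA{S}$ equal to the \emph{union} of the satisfying tuples of all clauses with that variable set --- typically all of $\TPA{S}$. Such a scope contributes $1$ to $n_1(x)$ for every $x$, while $s(y)$ may count only some of its clauses as satisfied; moreover $n_1(x)+n_2(x)=|\constraints|$ no longer equals $m+1$, so the threshold $\delta=(m-1)/2$ is miscalibrated. This is not merely cosmetic: take $\phi$ consisting of all $8$ clauses over $\{y_1,y_2,y_3\}$ (unsatisfiable) together with $6$ all-positive clauses on pairwise distinct variable triples. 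Then $m=14$, $\delta=13/2$, the collapsed scope $\{y_1,y_2,y_3\}$ has all $8$ tuples allowed, and the all-ones assignment gives $n_1(x)=7$, $n_2(x)=1$, hence $\VAL dx=-7+13/2<0$; your reduction answers ``yes'' although $\phi$ is unsatisfiable. The fix is easy and stays within your scheme: set $S_j:=\mathrm{vars}(c_j)\cup\{w_j\}$ for a fresh variable $w_j$, with allowed tuples those whose restriction to $\mathrm{vars}(c_j)$ satisfies $c_j$ ($14$ of the $16$). The scopes are then pairwise distinct, $n_1(x)=s(y)$ is restored, and the rest of your computation goes through verbatim (the paper permits arbitrary arities, so arity $4$ is not an issue). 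With this repair your proof is correct, and its only substantive difference from the paper's is the choice of source problem.
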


\begin{proof}
Membership in NP is analogous to Theorem~\ref{th:NPh_superrepar}. Since conditions (a) and (b) from Definition~\ref{de:t-deactivating} are satisfied, the question boils down to deciding whether $d\in M^*$. This cannot be reduced to the case in Corollary~\ref{co:NPh_cone} because $d$~in~\eqref{eq:deact_exists} has a special form.

To show hardness, we proceed by reduction from the 3-coloring problem~\cite{rosen2000handbook,karp1972reducibility}: given a graph~$G^*=(V^*,E^*)$, decide whether it is 3-colorable. Let~$G=(\vars,\constraints)$ be the graph sum (also known as the disjoint union of graphs) of~$G^*$ and~$K_4$~\cite[\S8.1.2]{rosen2000handbook}. $K_4$~is the complete graph with 4 vertices. Informally, $G$~is the graph obtained from~$G^*$ by adding 4 new vertices and including an edge between each pair of these new vertices.

Let CSP~$\CSP$ have the structure $(\domain,\vars,\constraints)$ where $|\domain|=3$ and
\begin{equation}\label{eq:NPh_graphs_A}
    \CSP = \{(\{i,j\},(k_i,k_j)) \mid \{i,j\}\in \constraints, k\in\domain^{\{i,j\}}, k_i\neq k_j\}.
\end{equation}
Hence, any $x\in\domain^\vars$ can be interpreted as an assignment of colors to the nodes of~$G$ and $x\in\SOL(\CSP)$ if and only if $x$~is a 3-coloring of~$G$. Since $G$~contains $K_4$~as its subgraph, it is not 3-colorable and $\CSP$~is unsatisfiable. Hence, setting~$\deactivating=\CSP$ satisfies $\SOL(\CSP-\deactivating)=\SOL(\emptyset)=\emptyset=\SOL(\CSP)$. 

For the purpose of our reduction, let us define $\delta = (|\constraints|-2)/2 >0$. We will show that for such a setting, $d$~is not an $\deactivating$-deactivating direction for~$\CSP$ if and only if $G^*$~is 3-colorable.

Plugging the above-defined sets~$\CSP$ and~$\deactivating$ into the definition of~$d$ in~\eqref{eq:deact_exists} yields
\begin{equation}\label{eq:NPh_energy_deac}
    \VAL dx= \sum_{\substack{\{i,j\}\in\constraints\\x_i\neq x_j}} (-1) + \sum_{\substack{\{i,j\}\in \constraints\\x_i= x_j}} \delta = \delta (|\constraints|-\COL(x))-\COL(x)
\end{equation}
where $\COL(x) = \ourabs{\ourset{\{i,j\}\in\constraints \mid x_i \neq x_j}}$ is the number of edges in~$G$ whose adjacent vertices have different colors in assignment~${x\in\domain^\vars}$.

If $G^*$~is 3-colorable, then there is~$x\in\domain^\vars$ such that~$\COL(x)=|\constraints|-1$. In other words, only for a single edge in $\constraints-E^*$ (i.e., edge of graph $K_4$), the adjacent vertices are assigned the same color, so $\VAL dx = -|\constraints|/2<0$ by~\eqref{eq:NPh_energy_deac} and definition of~$\delta$. Hence, $d$~is not an $\deactivating$-deactivating direction for~$\CSP$.

For the other case, if $G^*$~is not 3-colorable, then for any~$x\in\domain^\vars$, $\COL(x)\leq|\constraints|-2$. The reason is that for at least one edge in~$K_4$ and at least one edge in~$G^*$, the adjacent vertices will be assigned the same color in any assignment. By substituting the value of~$\delta$ and a simple manipulation of~\eqref{eq:NPh_energy_deac}, one obtains
\begin{equation}
    \VAL dx = \delta |\constraints|-(\delta +1) \COL(x) = |\constraints|\left(|\constraints|-2-\COL(x)\right)/2 \geq 0
\end{equation}
where the term in brackets is non-negative due to $\COL(x)\leq|\constraints|-2$ for any~$x\in\domain^\vars$. So, $d$~is an $\deactivating$-deactivating direction for~$\CSP$.
\end{proof}

In connection to~\S\ref{se:minimal_csp}, a number of decision problems concerning the minimal CSP have been also proved hard. For recent results, see~\cite{gottlob2012minimal,escamocher2018pushing}.

\section{Summary and Discussion}\label{se:summary}

We have proposed a method to compute upper bounds on the (maximization version of) WCSP. The WCSP is formulated as a linear program with an exponential number of constraints, whose feasible solutions are super-reparametrizations of the input WCSP instance (i.e., WCSP instances with the same structure and greater or equal objective values). Whenever the CSP formed by the active (i.e., maximal in their weight functions) tuples of a feasible WCSP instance is unsatisfiable, there exists an improving direction (in fact, a certificate of unsatisfiability of this CSP) for the linear program. As this approach provides only a subset of all possible improving directions, it can be seen as a local search. We showed how these improving directions can be generated by constraint propagation (or, more generally, by other methods to prove unsatisfiability of a CSP). We showed that super-reparametrizations are closely related to the dual cone to the well-known marginal polytope.

Special cases of our approach are the VAC / Augmenting DAG algorithm~\cite{Cooper-AI-2010,Koval76,Werner-PAMI07}, which uses arc consistency, and the algorithm in~\cite{komodakis2008beyond}, which uses cycle consistency. We have implemented the approach for singleton arc consistency, resulting in VSAC-SR algorithm. When compared to existing soft local consistency methods on a public dataset, VSAC-SR provides comparable or better bounds for many instances. Although the runtimes are higher than those of the simpler techniques, such as EDAC or VAC, one can control different trade-offs between bound quality and runtime by stopping the method prematurely, e.g., when the step size becomes small or terminating already with a greater value of~$\theta$ (see Footnote~\ref{foo:decreasing}).

The approach in general requires storing all the weights of the super-reparametrized WCSP instance. This may be a drawback when the domains are large and/or the weight functions are not given explicitly as a table of values but rather by an algorithm (oracle).

We expect our improved bounds to be useful when solving practical WCSP instances. Applications may include, e.g., using the method in preprocessing, pruning the search space during branch-and-bound search, providing tighter optimality gaps for solutions proposed by heuristic approaches, or generating high-quality proposals for solutions, as in~\cite{komodakis2008beyond}. However, we have done no experiments with this, so it is open whether the tighter bounds would outweigh the higher complexity of the algorithm. Due to the many options in which the method can be used, we leave this for future research. In addition, our approach can be also useful to solve more WCSP instances even without search (similarly, as the VAC algorithm solves all supermodular WCSPs without search) or, given a suitable primal heuristic, to solve WCSP instances approximately.

The approach can be straightforwardly extended to WCSPs with different domain sizes\footnote{In fact, our implementation already supports different domain sizes. We did not present our theoretical results for this generalized setting only to simplify notation.} and some weights equal to minus infinity (i.e., some constraints being hard). Of course, further experiments would be needed to evaluate the quality of the bounds if infinite weights are allowed. The WCSP framework also usually assumes a pre-defined specific finite bound that is updated during branch-and-bound~\cite{cooper2020valued} -- although the presented pseudocode does not support this, it is not difficult to extend it in this way.

Finally, we presented a theoretical analysis of the concept of super-reparametrizations of WCSPs, describing the properties of optimal super-reparametrizations and characterizing the set of active-tuple CSPs induced by different optimal super-reparametrizations. For example, even an optimal super-reparametrization may change the set of optimal assignments, as shown in~\S\ref{se:props_optimal}. Additionally, we have shown that general (i.e., possibly non-optimal) super-reparametrizations are only weakly related to the original WCSP instance.

\vspace{0.5cm}
\noindent \textbf{Acknowledgments:} Tom\'{a}\v{s} Dlask and Tom\'{a}\v{s} Werner were supported by the Czech Science Foundation (grant 19\nobreakdash-09967S) and the OP VVV project CZ.02.1.01/0.0/0.0/16\textunderscore019/0000765. Tom\'{a}\v{s} Dlask was also supported by the Grant Agency of the Czech Technical University in Prague (grants SGS19/170/OHK3/3T/13 and SGS22/061/OHK3/1T/13). Simon de Givry was supported by the French {\em Agence nationale de la Recherche}  (ANR\nobreakdash-19\nobreakdash-P3IA\nobreakdash-0004 ANITI). Open access publishing was supported by the National Technical Library in Prague. 

\begin{appendix}

\section{Example: EDAC, VAC, and VSAC}\label{ap:example}

\renewcommand\thefigure{\arabic{figure}}
\setcounter{figure}{6}    

In this section, we present an example that shows how EDAC, VAC, and VSAC can be gradually enforced in a WCSP. As in~\cite{Cooper-AI-2010,de2005existential}, we restrict this section to binary WCSPs and assume that~$\{i\}\in \constraints$ for each~$i\in \vars$. Recall that VAC and VSAC were formally defined already in Remark~\ref{re:virtual_consistencies}: a WCSP~$f\in\mathbb{R}^T$ is virtual $\Phi$-consistent if~$A^*(f)$ has a non-empty $\Phi$-consistency closure. Now, we proceed to define EDAC in our formalism. For this purpose, let~$E = \ourset{\scope\in\constraints \mid |\scope|=2}$ be the set of binary constraint scopes (as in~\S\ref{se:final_alg} and~Example~\ref{ex:cc}) and~$N_i=\{j\in \vars\mid \{i,j\}\in E\}$ be the set of neighbors of node~$i$ in the undirected graph~$(\vars,E)$.

\begin{definition}[{\cite{Cooper-AI-2010,de2005existential}}]
Let~$A\subseteq T$, $\{i,j\}\in E$, and~$k_i\in D$. The tuple~$(\{i\},k_i)$ is \emph{simply supported} by~$j$ in~$A$ if~$\exists k_j\in D$ such that~$(\{i,j\},(k_i,k_j))\in A$. The tuple~$(\{i\},k_i)$ is \emph{fully supported} by~$j$ in~$A$ if~$\exists k_j\in D$ such that~$(\{i,j\},(k_i,k_j))\in A$ and~$(\{j\},k_j)\in A$.
\end{definition}

\begin{definition}[{\cite{Cooper-AI-2010,de2005existential}}]\label{def:edac}
Let~$g\in\mathbb{R}^T$ be a binary WCSP and~$\preceq$ be a total order on its set of variables~$\vars$. WCSP~$g$ is \emph{Existential Directional Arc Consistent (EDAC)} w.r.t.~$\preceq$ if the following conditions hold:
\begin{itemize}
    \item $\forall i \in V\; \forall k\in D\; \forall j\in N_i$: $i \preceq j \implies$ $(\{i\},k)$ is fully supported by~$j$ in~$A^*(g)$,
    \item $\forall i \in V\; \forall k\in D\; \forall j\in N_i$: $j \preceq i \implies$ $(\{i\},k)$ is simply supported by~$j$ in~$A^*(g)$,
    \item $\forall i \in V\; \exists k\in D$: $(\{i\},k)\in A^*(g)$ and $\forall j\in N_i$: $(\{i\},k)$ is fully supported by~$j$ in~$A^*(g)$.
\end{itemize}
\end{definition}

\begin{remark}
The notions of Definition~\ref{def:edac} correspond to the notions in~\cite{Cooper-AI-2010,de2005existential} but they are tailored to our formalism. The first difference is that we do not consider infinite weights (i.e., hard constraints), which simplifies some conditions in the definition. The second difference is that the `baseline' for a weight~$g_t$, $t\in T_S$ is neither~${\perp}$ nor~0, but rather~$\max_{t'\in T_S}g_{t'}$. Consequently, we require $g_t=\max_{t'\in T_S}g_{t'}$ (i.e., $t\in A^*(g)$) in the definitions instead of~$g_t=0$.
\end{remark}

\begin{remark}
Let us also comment on the individual conditions in Definition~\ref{def:edac}. The first condition is known as Directional Arc Consistency w.r.t.~$\preceq$~\cite{Cooper-AI-2010,de2005existential}. The first and second condition together are known as Full Directional Arc Consistency w.r.t.~$\preceq$~\cite{Cooper-AI-2010,de2005existential}. Finally, the third condition is Existential Arc Consistency~\cite{Cooper-AI-2010,de2005existential}.
\end{remark}

\begin{figure}
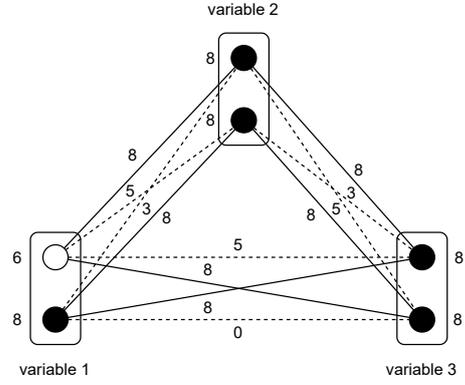
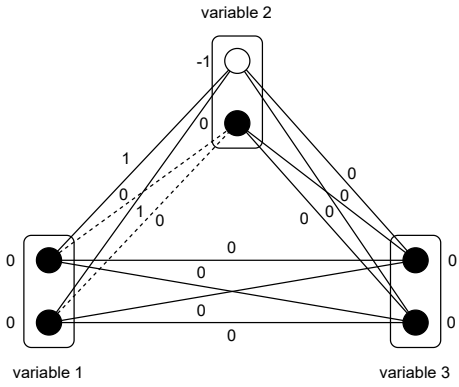
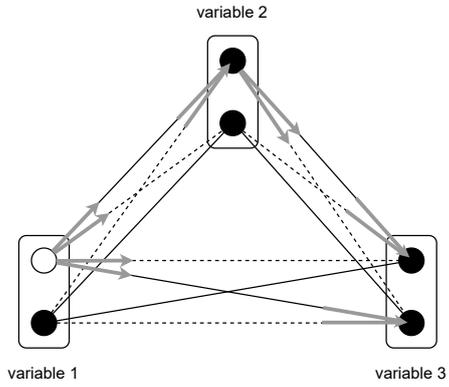
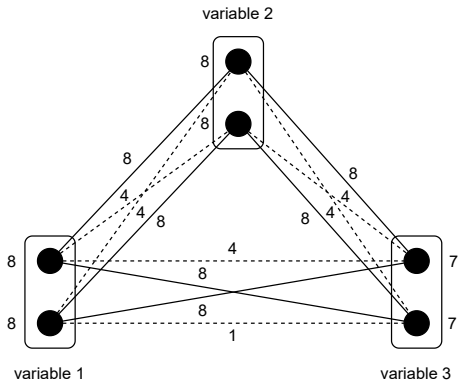
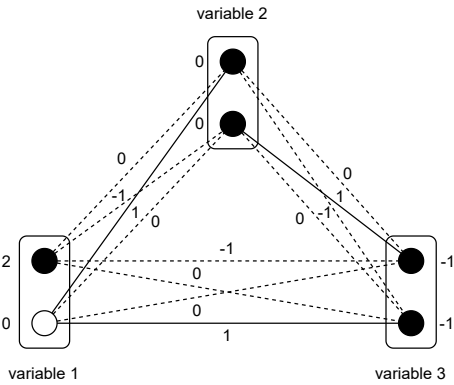

\centering
\begin{subfigure}[t]{0.4077\textwidth}
\centering
\includegraphics[width=\textwidth]{1_not_EDAC_corrected}
\caption{WCSP~$f^1$ with~$\UB(f^1)=49$. This WCSP is not EDAC w.r.t.~$\leq$.} \label{fig:appendix_1}
\end{subfigure}\hspace{0.05\textwidth}
\begin{subfigure}[t]{0.4077\textwidth}
\centering
\includegraphics[width=\textwidth]{2_made_EDAC_not_VAC}
\caption{WCSP~$f^2$ with~$\UB(f)=48$. This WCSP is EDAC w.r.t.~$\leq$ but not VAC.}
\label{fig:appendix_2}
\end{subfigure}
\begin{subfigure}[t]{0.4077\textwidth}
\centering
\includegraphics[width=\textwidth]{3_diff_to_EDAC}
\caption{WCSP~$f^2-f^1\in M^{\perp}$.}
\label{fig:appendix_3}
\end{subfigure}\hspace{0.05\textwidth}
\begin{subfigure}[t]{0.38804\textwidth}
\centering
\includegraphics[width=\textwidth]{4_VAC_propagation}
\caption{Propagation of arc consistency in CSP~$A^*(f^2)$.}
\label{fig:appendix_4}
\end{subfigure}
\begin{subfigure}[t]{0.4077\textwidth}
\centering
\includegraphics[width=\textwidth]{5_made_VAC}
\caption{WCSP~$f^3$ with~$\UB(f)=47$. This WCSP is VAC but not VSAC.}
\label{fig:appendix_5}
\end{subfigure}\hspace{0.05\textwidth}
\begin{subfigure}[t]{0.4077\textwidth}
\centering
\includegraphics[width=\textwidth]{6_diff_EDAC_VAC}
\caption{WCSP~$f^3-f^2\in M^{\perp}$.}
\label{fig:appendix_6}
\end{subfigure}
\caption{Enforcing EDAC and VAC in a WCSP via reparametrizations.} 
\end{figure}

We now proceed to show our example where EDAC, VAC, and VSAC will be gradually enforced. The initial WCSP~$f^1$ is depicted in Figure~\ref{fig:appendix_1}. The structure of this WCSP is~$(\domain,\vars,\constraints)$ where~$\domain=\{\val{a,b}\}$, $\vars=\{1,2,3\}$, and~$\constraints=\{\{1\},\{2\},\{3\},\{1,2\},\{2,3\},\{1,3\}\}$. The names of the variables are indicated in the figures. To simplify the figures throughout this example, we do not state the names of the values in them -- the upper value is~$\val a$ and the lower value is~$\val b$. The optimal objective value of WCSP~$f^1$ is 43, which is attained, e.g., by the assignment~$x=(\val{a,a,a})$.

WCSP~$f^1$ is not EDAC (w.r.t. the natural ordering by~$\leq$) because the tuple~$(\{1\},\val b)$ is not fully supported by variable~2. To make WCSP~$f^1$ EDAC, it is sufficient to shift weight from the unary tuple~$(\{2\},\val a)$ to the binary weight function with scope~$\{1,2\}$, which results in WCSP~$f^2$ depicted in Figure~\ref{fig:appendix_2}. WCSP~$f^2$ is a reparametrization of~$f^1$ due to~$f^2-f^1\in M^{\perp}$ (depicted in Figure~\ref{fig:appendix_3}).

WCSP~$f^2$ is EDAC w.r.t.~$\leq$ but it is not VAC because the AC closure of~$A^*(f^2)$ is empty. To see that the AC closure is empty, we can follow the propagations that are depicted in Figure~\ref{fig:appendix_4}. The arrows point from the cause of forbidding a tuple to the newly forbidden tuple. First, we can forbid the tuple~$(\{1,2\},(\val{a,a}))$ because the tuple~$(\{1\},\val a)$ is forbidden. Second, we can forbid the tuple~$(\{2\},\val a)$ because both tuples~$(\{1,2\},(\val{a,a}))$ and~$(\{1,2\},(\val{b,a}))$ are forbidden. Next, we gradually forbid~$(\{2,3\},(\val{a,a}))$, $(\{3\},\val a)$, $(\{1,3\},(\val{a,b}))$, and~$(\{3\},\val b)$. This leads to domain wipe-out in variable~3. By shifting the weights against the direction of the arrows (as depicted in Figure~\ref{fig:appendix_4}), we make this WCSP VAC. This yields the WCSP~$f^3$ in Figure~\ref{fig:appendix_5} which is a reparametrization of~$f^2$. For clarity, we also show how the weights were transformed in Figure~\ref{fig:appendix_6}.

\begin{figure}
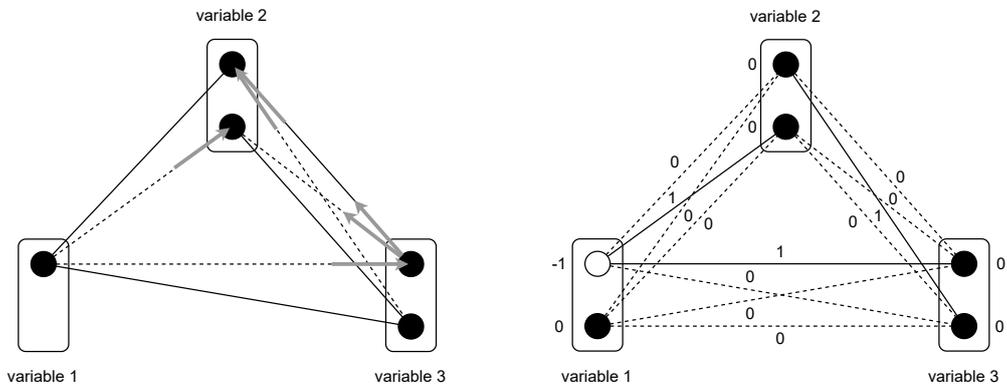
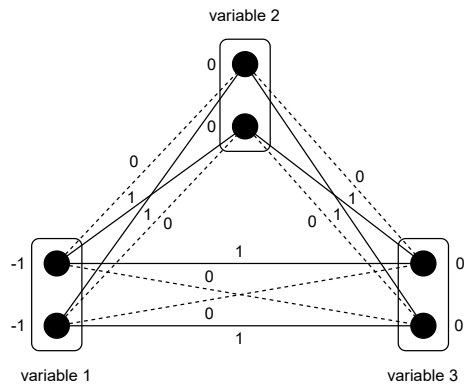
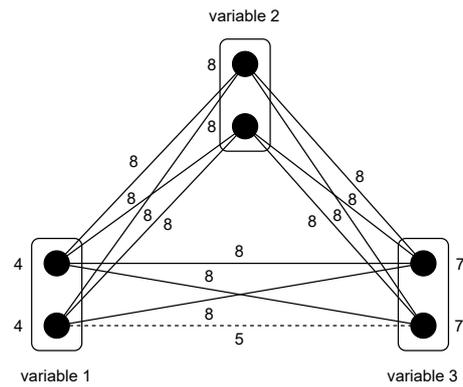

\centering
\begin{subfigure}[t]{0.38804\textwidth}
\centering
\includegraphics[width=\textwidth]{7_propagation_SAC}
\caption{Propagation of arc consistency in CSP~$A^*(f^3)\vert_{x_1=\val a}$.} \label{fig:appendix_7}
\end{subfigure}\hspace{0.05\textwidth}
\begin{subfigure}[t]{0.4077\textwidth}
\centering
\includegraphics[width=\textwidth]{8_one_deactivating_direction}
\caption{$\{(\{1\},\val a)\}$-deactivating direction~$d$ for CSP~$A^*(f^3)$.}
\label{fig:appendix_8}
\end{subfigure}
\begin{subfigure}[t]{0.8154\textwidth}
\centering
\includegraphics[width=0.5\textwidth]{9_deactivating_direction_complete}
\caption{$\{(\{1\},\val a), (\{1\},\val b)\}$-deactivating direction~$d''$ for CSP~$A^*(f^3)$.}
\label{fig:appendix_9}
\end{subfigure}

\begin{subfigure}[t]{0.8154\textwidth}
\centering
\includegraphics[width=0.5\textwidth]{10_made_VSAC}
\caption{WCSP~$f^4=f^3+4d''$ with~$\UB(f^4)=43$. This WCSP is VSAC.}
\label{fig:appendix_10}
\end{subfigure}
\caption{Enforcing VSAC via super-reparametrizations.} 
\end{figure}

WCSP~$f^3$ is VAC and even OSAC, so the bound~$\UB(f^3)$ cannot be improved by reparametrizations (without introducing a ternary weight function with scope~$\{1,2,3\}$). WCSP~$f^3$ is however not VSAC because it has empty SAC closure. Thus, $A^*(f^3)$~is unsatisfiable and we are able to construct a super-reparametrization of WCSP~$f^3$ with a better bound (recall Theorem~\ref{th:CSPcert} and~\S\ref{se:constraint_propagation_scheme}). We show next the details of the construction, following our results from~\S\ref{se:improving_directions},~\S\ref{se:line_search}, and~\S\ref{se:final_alg}.

Figure~\ref{fig:appendix_7} shows how arc consistency is enforced in the CSP~$A^*(f^3)\vert_{x_1=\val a}$ and the notation is analogous to Figure~\ref{fig:appendix_4}. In detail, we gradually forbid tuples~$(\{3\},\val a)$, $(\{2,3\},(\val a,\val a))$, $(\{2\},\val a)$, and finally~$(\{2\},\val b)$, which leads to domain wipe-out in variable~2. Consequently, the AC closure of~$A^*(f^3)\vert_{x_1=\val a}$ is empty. To derive this, it suffices that the tuples $P=\{(\{1,3\},(\val a,\val a)), (\{2,3\},(\val a,\val b)), (\{1,2\},(\val a,\val b))\}$ are forbidden in~$A^*(f^3)$. Applying Theorem~\ref{th:deact_exists} with~$A=T-P\supseteq A^*(f^3)$ and~$\deactivating=\{(\{1\},\val a)\}$ results in $\{(\{1\},\val a)\}$-deactivating direction~$d$ for~$A^*(f^3)$ that is shown in Figure~\ref{fig:appendix_8}. Analogously, we can compute a $\{(\{1\},\val b)\}$-deactivating direction~$d'$ for~$A^*(f^3)$ (not shown). By summing these deactivating directions together (i.e., using Theorem~\ref{th:combining_deactivating_vectors} which in this case yields~$\delta=1$), we obtain a $\{(\{1\},\val a),(\{1\},\val b)\}$-deactivating direction~$d''=d+d'$ for~$A^*(f^3)$ that certifies unsatisfiability of~$A^*(f^3)$. By Theorem~\ref{th:improve_bound}, we compute the step size~$\alpha=\min\{\beta,\gamma\}=4$ and obtain WCSP~$f^4=f^3+\alpha d''$ which is shown in Figure~\ref{fig:appendix_10} and is VSAC.

\end{appendix}

\clearpage

\bibliographystyle{abbrv}
\bibliography{mybibliography.bib}

\end{document}